\newtheorem{proposition}{Proposition}[section]
\newtheorem{theorem}[proposition]{Theorem}
\newtheorem{remark}[proposition]{Remark}
\newtheorem{lemma}[proposition]{Lemma}
\newtheorem{definition}[proposition]{Definition}
\newcommand\sqr[2]{{\vcenter{\vbox{\hrule height.#2pt
   \hbox{\vrule width.#2pt height#1pt \kern#1pt
      \vrule width.#2pt}
   \hrule height.#2pt}}}}
\newcommand{\ds}{\displaystyle}
\renewcommand{\epsilon}{\varepsilon}
\newcommand{\eps}{\epsilon}
\newcommand{\BV}{\mathbf{BV}}
\newcommand{\C}[1]{{{\mathbf{C}^\mathbf{#1}}}}
\renewcommand{\L}[1]{{{\mathbf{L}^\mathbf{#1}}}}
\newcommand{\Lloc}[1]{\mathbf{L}^{\mathbf{1}}_{\mathrm{loc}}}
\newcommand{\reali}{{\mathbb{R}}}
\newcommand{\naturali}{\mathbb{N}}
\newcommand{\Id}{I\!d}
\newcommand{\RII}{R_{I\!I}}
\newcommand{\RLL}{R_{L\!L}}
\newcommand{\RLI}{R_{L\!I}}
\newcommand{\RIL}{R_{I\!L}}
\newcommand{\xa}{\mathpzc{a}}
\newcommand{\xb}{\mathpzc{b}}
\DeclareMathAlphabet{\mathpzc}{OT1}{pzc}{m}{it}
\newcommand{\sgn}{\mathrm{sgn}}
\newcommand{\tv}{\mathop{\rm TV}}
\newlength{\captionwidth}
\long\def\@makecaption#1#2{%
   \vskip 10\p@
   \setbox\@tempboxa\hbox{#1: #2}%
   \ifdim \wd\@tempboxa > \captionwidth 
       \hbox to\hsize{\hfil
       \parbox[t]{\captionwidth}{
       #1: #2\par}
       \hfil}
     \else
       \hbox to\hsize{\hfil\box\@tempboxa\hfil}%
   \fi}
\title{Global existence of solutions for a multi-phase flow:\\
 a drop in a gas-tube}
\author{}
\author{Debora Amadori\footnote{Department of Engineering and Computer Science and Mathematics, University of L'Aquila,
Italy}
\and Paolo Baiti\footnote{Department of Mathematics and Computer Science, University of Udine, Italy}
\and Andrea Corli\footnote{Department of Mathematics and Computer Science,
University of Ferrara, Italy}
\and Edda Dal Santo\footnotemark[2]
}
\date{February 6, 2015}
\begin{document}

\maketitle

\begin{abstract}
In this paper we study the flow of an inviscid fluid composed by three different phases. The model is a simple hyperbolic system of three conservation laws, in Lagrangian coordinates, where the phase interfaces are stationary. Our main result concerns the global existence of weak entropic solutions to the initial-value problem for large initial data.
\end{abstract}

\smallskip

\noindent\textit{2010~Mathematics Subject Classification:} 35L65,
35L60, 35L67, 76T99.

\smallskip

\noindent\textit{Key words and phrases:}
  Hyperbolic systems of conservation laws, phase transitions, wave-front tracking algorithm.


\section{Introduction}\label{sec:intro}


The theory of hyperbolic systems of conservation laws in one spatial dimension has reached in the last years a rather satisfactory level of
completeness, as the reference book of Dafermos \cite{Dafermos} witnesses. Among the several important results that have been proved, probably the greatest achievement concerns the global existence in time of weak solutions to the initial-value problem, as well as their uniqueness, continuity with respect to the data and viscous approximations. However, such results hold, in general, only for {\em small} initial data: the case of {\em large} data has been given no general and satisfactory answer. This paper focuses precisely on this issue in the case of a simple but physically meaningful system of three equations, for which we provide {\em explicit} conditions on the initial data in order to have global solutions.

The system under consideration arises in the modeling of phase transitions for an inviscid fluid and is deduced by \cite{Fan}. If we denote by $v>0$ the specific volume of the fluid, $u$ the velocity, $p$ the pressure and $\lambda$ the mass-density fraction of the vapor, the system is written as
\begin{equation}\label{eq:system}
\left\{
\begin{array}{ll}
v_t - u_x &= 0\,,\\
u_t + p(v,\lambda)_x &= 0\,,
\\
\lambda_t &= 0\,.
\end{array}
\right.
\end{equation}
As usual, here $t>0$ denotes the time and $x\in\reali$. The phase states of the fluid are modeled by the variable $\lambda$, which ranges from $0$ (pure liquid) to $1$ (pure vapor) and allows for intermediate values in the interval $]0,1[$ representing mixtures of the two pure phases. The model incorporates the state variable $\lambda$ in the pressure, which is defined by
\begin{equation}\label{eq:pressure}
p(v,\lambda)= \frac{a^2(\lambda)}v\,,
\end{equation}
where $a(\lambda)>0$ and is a $\C{1}$ function on $[0,1]$. We denote by $U=(v,u,\lambda)$ the state variables and by $\Omega=]0,+\infty[ \times \reali \times [0,1]$ the state space. System \eqref{eq:system} is strictly hyperbolic in $\Omega$ with eigenvalues $e_1 = -\sqrt{-p_v}$, $e_2 = 0$, $e_3 = \sqrt{-p_v}$; the first and the third characteristic fields are genuinely nonlinear, while the second one is linearly degenerate.

The first result on the existence of global solutions to system \eqref{eq:system}, provided with suitably large initial data, is given in \cite{amadori-corli-siam}; a different proof is given in \cite{Asakura-Corli}. In particular, in the case where $\lambda$ is constant, the classical result by Nishida \cite{Nishida68} is recovered.

The analysis of \cite{amadori-corli-siam} is pursued and refined in \cite{ABCD} for initial data with $\lambda$ of Riemann type:
$\lambda(x,0)$ is constant for $x\ne0$ with a jump at $0$. In this case system \eqref{eq:system} decouples for any $t>0$
into two $p$-systems connected by a phase interface at $x=0$, because the discontinuities of $\lambda$ do not propagate. %
The special form of $\lambda$ allowed us to analyze in detail the effect of the nonlinear interaction of pressure waves through the phase interface,
leading to refined sufficient conditions on the initial data for which solutions exist globally in time.

A survey on couplings of two systems of conservation laws, with a focus on numerical approximations, is given in \cite{Godlewski_survey};
however, we emphasize that the above coupling for system \eqref{eq:system} is {\em the} physical coupling, where the interface is a contact
discontinuity. We refer to \cite{amadori-corli-siam, ABCD} for further references on related results.

In this paper we continue the analysis of system \eqref{eq:system} by considering the case where the initial datum for $\lambda$
is piecewise constant with {\em two} jumps. 
Let the initial data be of the form
\begin{equation}
U_o(x) = \left(v_o(x), u_o(x), \lambda_o(x) \right)\,, \quad \hbox{ for }\quad \lambda_o(x) = \left\{
\begin{array}{ll}
\lambda_\ell & \hbox{ if }x<\xa \,,\\
\lambda_m & \hbox{ if }\xa<x<\xb\,,\\
\lambda_r & \hbox{ if }x>\xb\,,\\
\end{array}
\right.
\label{init-data}
\end{equation}
where $x\in\reali$ and $\lambda_\ell$, $\lambda_m$, $\lambda_r\in[0,1]$ are constant. We define $a_\ell=a(\lambda_\ell)$, $a_m=a(\lambda_m)$, $a_r=a(\lambda_r)$ and focus on the case
\begin{equation}\label{eq:case}
a_m<\min\{a_\ell,a_r\}\,.
\end{equation}
To give a flavor of the physical meaning of the problem, assume that $a(\lambda)$ is \emph{increasing} 
(which is the physically meaningful case) so that \eqref{eq:case} implies $\lambda_m < \min\left\{\lambda_\ell,\lambda_r\right\}$.
Here, we are dealing with a one-dimensional fluid consisting of three homogeneous mixtures of liquid and vapor;
the mixture in the region $]\xa,\xb[$ is more liquid than in the surrounding ones. This includes the case of a liquid drop in a gaseous environment.
The other cases $a_m>\max\{a_\ell,a_r\}$ (a bubble surrounded by liquid) and $a_\ell<a_m<a_r$ (or $a_\ell>a_m>a_r$) are considered
in a forthcoming work \cite{ABCD3}.

A similar model is studied in \cite{ColomboSchleper}. There, the basic system (in Eulerian coordinates) has only two equations but is augmented with
kinetic conditions deduced by the mass and momentum conservation at the interfaces, which make that model essentially equivalent to
\eqref{eq:system}. However, the results of \cite{ColomboSchleper} concern a general pressure law but {\em small} initial data. We refer also to
\cite{ColomboGuerraSchleper} where an analogous system (in Lagrangian coordinates) is studied, in which the pressure in the region $[\xa,\xb]$ is a linear function of $v$.

Notice also that \eqref{eq:system}, \eqref{init-data} can be interpreted as a perturbation problem of the steady solution
given by the two parallel contact discontinuities located at $x=\xa$ and $x=\xb$, respectively.
We refer to 
\cite{LewickaBV,Schochet-Glimm} for the analysis of the perturbation 
of a single contact discontinuity.

The main result of this paper is Theorem~\ref{thm:main}, that provides a wide class of {\em large} initial data
for which the solution to the initial-value problem \eqref{eq:system}, \eqref{init-data} exists globally in time.
Roughly speaking, the conditions on the data require that the total variations of $p_o=p(v_o,\lambda_o)$ and $u_o$ do not exceed a certain threshold
depending on the sizes of the interfaces, see \eqref{hyp2}: the larger are the interfaces, the smaller must be the variations and conversely.
Also, if the variations are sufficiently small then {\em any} size of the interfaces is allowed, provided that
the stability condition \eqref{stab} (that was missing in \cite{ABCD}) holds.
Such a result was proved, to the best of our knowledge, in no related paper.
Moreover, we point out that our conditions on the initial data are sufficiently flexible to allow the control of the variations in either of the three phases.

For the proof of Theorem~\ref{thm:main}, two novel ideas are employed.
The first one is a simplification in the definition of the functional $F$ used to control the total variation of the solutions (see \eqref{F} and Remark~\ref{rem:F_asymm}), in which some nonlinear terms are dropped, thanks to a more careful use of nonlinear interactions involving phase waves.

The second one is an original variant of the front-tracking algorithm \cite{Bressanbook}, that is needed in order to ensure
that the functional $F$ is decreasing.
Indeed, the classical front-tracking scheme prescribes two ways of solving the Riemann problem arising at an interaction:
either by means of an Accurate solver
or by a Simplified solver that exploits \emph{non-physical waves}, which are used to prevent the possible blow-up in
finite time of the number of fronts and interactions.
Here we provide an original definition of the Simplified solver, suitably designed for this problem.
At any interaction of a small wave with an interface, the Simplified solver introduces stationary, non-entropic waves
(associated to the integral curves of \eqref{eq:system}), which are formally computed as reflected waves.
These waves ``travel'' with zero speed and then remain attached to the phase wave, thus forming a ``composite wave''.
Such a Riemann solver somewhat reminds of the famous Osher solver frequently used in Numerical Analysis, see \cite[\S 12]{Toro}.
The idea of introducing stationary composite waves for the Simplified solver is also exploited in \cite{ABCD} where,
however, the jump across non-physical waves is defined as in \cite{Bressanbook}.

Notice that when $\lambda$ is constant, system \eqref{eq:system} reduces to a $2\times 2$ system and one can avoid the use of the Simplified solver
(see \cite{AmadoriGuerra01, BaitiDalSanto}).
We point out that non-physical waves are also avoided in
\cite{ColomboSchleper,ColomboGuerraSchleper}, but for different reasons: in \cite{ColomboSchleper}, 
due to a particular solver and to the smallness of the data, while in \cite{ColomboGuerraSchleper}
because of the assumption of linear pressure in the region $[\xa,\xb]$.

The paper is organized as follows. The main result is stated in Section~\ref{main}. In Section~\ref{sec:Riemann} we first introduce four pre-Riemann
solvers: one of them is used to define the composite wave, the other three are exploited either in the Accurate or in the Simplified solver. 
Proposition~\ref{prop:accsimpl} gives a unified approach to both the Accurate and the Simplified solver.
Approximate solutions are defined in
Section~\ref{sec:app_sol}. In Section~\ref{sec:interaction} 
we introduce the main functional $F$ and show that it is decreasing in time.
Section~\ref{sec:convergence} deals with the convergence and consistency of the algorithm, together with a decay property of the reflected waves;
we provide also a comparison with \cite{amadori-corli-siam} which shows how the preceding result is
improved. Finally, in Appendix~\ref{appB} it is proved an alternative estimate concerning certain interactions solved by the Simplified solver.


\section{Main Result}\label{main}
\setcounter{equation}{0}

Throughout this paper we assume \eqref{eq:case} and call $\eta$, $\zeta$ the strengths of the two $2$-waves, as 
in \cite{AmCo06-Proceed-Lyon}:
$$
 \eta = 2\, \frac{a_m-a_\ell}{a_m+a_\ell}\,,\qquad \zeta=2\, \frac{a_r-a_m}{a_r+a_m}\,.
$$
By \eqref{eq:case} and for $a_\ell$, $a_m$, $a_r$ in $\reali^+=\,]0,+\infty[$, one easily finds that
$$
\eta<0\,,\qquad \zeta>0\,,\qquad |\eta|,\,|\zeta| \in [0,2[ \,.
$$   
In what follows, we need that $\eta$ and $\zeta$ satisfy the \emph{stability} condition
\begin{equation}\label{stab}
    \max\left\{\bigl(1+\frac{|\zeta|}{2}\bigr)\frac{|\eta|}{2},
    \bigl(1+\frac{|\eta|}{2}\bigr)\frac{|\zeta|}{2}\right\}<1\,.
\end{equation}
When one of the two waves $\eta$ or $\zeta$ vanishes, for example $\zeta=0$, then \eqref{stab} reduces to $|\eta|<2$, which is always satisfied.
The inequality \eqref{stab} identifies a set $\mathcal{D} \subset [0,2[\times [0,2[$ (see Figure~\ref{ngoccia2}), where we define a non-negative and
continuous function $\mathcal{H}$ by
\begin{equation}\label{eq:Hdef}
\mathcal{H}(|\eta|,|\zeta|)=
\max\left\{\frac{|\zeta|}{1-(1+{|\zeta|}/{2}){|\eta|}/{2}},
\frac{|\eta|}{1-(1+{|\eta|}/{2}){|\zeta|}/{2}}\right\}\,.
\end{equation}

\begin{figure}[htbp]
    \begin{center}
	\includegraphics[width=6cm]{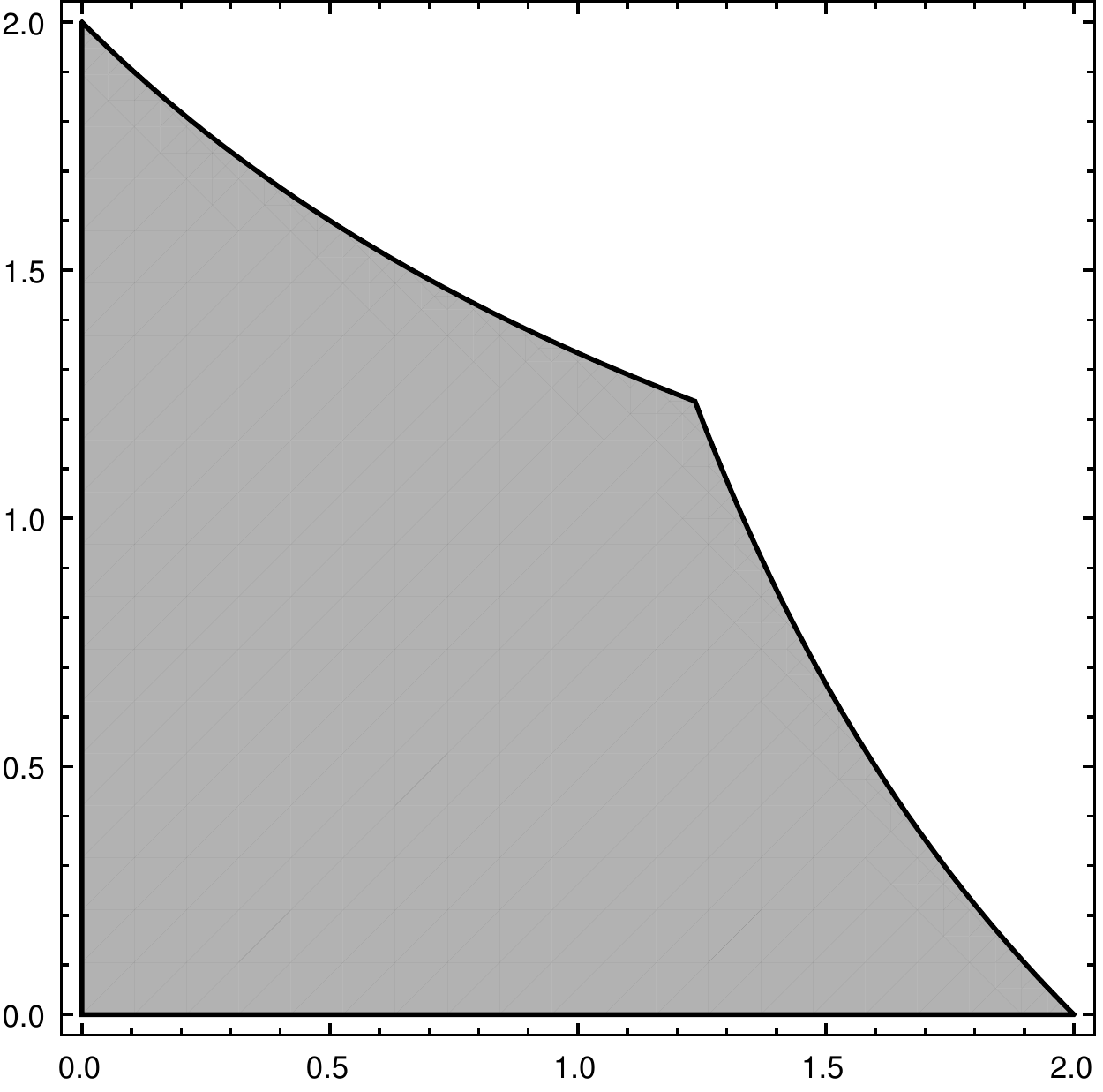}
	\begin{picture}(0,0)%
	    \put(-91,-10){$|\eta|$}%
	    \put(-188,86){$|\zeta|$}%
	\end{picture}%
    \end{center}
    \caption{The domain $\mathcal{D}$ in the $(|\eta|,|\zeta|)$-plane.}\label{ngoccia2}
\end{figure}
Notice that $\mathcal{H}=0$ only when $\eta=\zeta=0$; it holds $\mathcal{H}(|\eta|,0)=|\eta|$ and $\mathcal{H}(0,|\zeta|)=|\zeta|$.
Moreover, we have that $\mathcal{H}(|\eta|,|\zeta|)$ tends to $+\infty$ when $(|\eta|,|\zeta|)$ tends to the curved edges of $\mathcal{D}$.

Following the notation in Figure~\ref{fig:0}, we set
\begin{equation*}
\mathcal{L}=\{(x,t):\, x<\xa\}\,,\quad \mathcal{M}=\{(x,t): \,\xa<x<\xb\}\,,\quad \mathcal{R}=\{(x,t):\,x>\xb\}\,.
\end{equation*}
We denote $p_o(x) = p\left(v_o(x), \lambda_o(x) \right)$ and $\tv (f,g)= \tv f +\tv g$, for any 
$f=f(x),g=g(x)$.


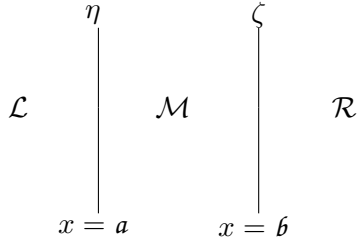
\begin{figure}[htbp]
\begin{picture}(100,100)(-80,-5)
\setlength{\unitlength}{1pt}

\put(170,0){
\put(0,0){\line(0,1){40}} \put(0,75){\makebox(0,0){$\zeta$}}
\put(-60,0){\line(0,1){40}} \put(-62,75){\makebox(0,0){$\eta$}}
\put(-62,-5){\makebox(0,0){$x=\xa$}}\put(-2,-5){\makebox(0,0){$x=\xb$}}
\put(0,40){\line(0,1){30}} 
\put(-60,40){\line(0,1){30}} 
\put(-32,40){\makebox(0,0){$\mathcal{M}$}}
\put(32,40){\makebox(0,0){$\mathcal{R}$}}
\put(-90,40){\makebox(0,0){$\mathcal{L}$}}
}

\end{picture}

\caption{\label{fig:0}{The regions $\mathcal{L}$, $\mathcal{M}$, $\mathcal{R}$ in the $(x,t)$-plane.}}
\end{figure}


\begin{theorem}\label{thm:main}
Assume \eqref{eq:pressure} and consider initial data \eqref{init-data} with $v_o(x)\ge \underline{v}>0$,
for some constant $\underline{v}$. Assume also \eqref{eq:case} and \eqref{stab}.
There exists a strictly decreasing function $\mathcal{K}$ defined for $r>0$, with 
$$
\lim_{r\to 0+} \mathcal{K}(r) = +\infty\,, \qquad \lim_{r\to +\infty}\mathcal{K}(r)=0\,,
$$
such that if it holds
\begin{equation}\label{hyp2}
\tv_{x<\xa}\left(\log(p_o),\frac{u_o}{a_\ell}\right) + \frac{1}{1+\mathcal{H}(|\eta|,|\zeta|)}\tv_{\xa<x<\xb}\left(\log(p_o),\frac{u_o}{a_m}\right) + \tv_{x>\xb}\left(\log(p_o),\frac{u_o}{a_r}\right)<\mathcal{K}\left(\mathcal{H}(|\eta|,|\zeta|)\right),
\end{equation}
then the Cauchy problem \eqref{eq:system}, \eqref{init-data} has a
weak entropic solution $(v,u,\lambda)$ defined for
$t\in\left[0,+\infty\right[$. If $\eta=\zeta=0$ the same conclusion holds with $\mathcal{K}\left(\mathcal{H}(|\eta|,|\zeta|)\right)$ replaced by $+\infty$ in \eqref{hyp2}.

Moreover, the solution is valued in a compact set and $(v(\cdot,t),u(\cdot,t))\in L^\infty([0,\infty[; \BV(\reali))$.
\end{theorem}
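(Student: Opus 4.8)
The plan is to construct approximate solutions by a wave-front tracking algorithm and then to pass to the limit, the key being a uniform-in-time control of the total variation through the functional $F$. First I would approximate the datum $U_o$ by piecewise constant functions, keeping the two interfaces at $x=\xa$ and $x=\xb$ fixed: since the second field is linearly degenerate with zero speed, the jumps of $\lambda$ do not propagate, and the interfaces remain the stationary contacts of strengths $\eta$ and $\zeta$. At each discontinuity one solves a Riemann problem. Away from the interfaces this is the classical Riemann problem for the $p$-system (with $a$ constant), while at an interface one couples two $p$-systems through the stationary contact, across which $u$ and $p$ are continuous and $a$ jumps. Fronts are then tracked by the Accurate or the Simplified solver of Proposition~\ref{prop:accsimpl}, the latter using the stationary, non-entropic composite waves attached to the phase waves.

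The central object is the functional $F$, a weighted total variation of the approximate solution measured in the coordinates $\log p$ and $u/a$, which are (up to the constants $2\log a$ in each region) the Riemann invariants $u\mp a\log v$ divided by $a$; the middle region $\mathcal{M}$ carries the weight $1/(1+\mathcal{H}(|\eta|,|\zeta|))$ appearing in \eqref{hyp2}. The heart of the proof is to show that $F$ is non-increasing across every interaction. For two pressure waves meeting away from an interface this follows from the classical quadratic Glimm interaction estimates for the $p$-system. The delicate case is a pressure wave hitting an interface: a $1$-wave striking $\eta$ produces a transmitted $1$-wave and a reflected $3$-wave, the reflection coefficient being exactly $\eta/2=(a_m-a_\ell)/(a_m+a_\ell)$ (and symmetrically $\zeta/2$ at the other interface). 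Since a reflected wave bounces back and forth between the two interfaces, the total strength generated is governed by a geometric-type series whose convergence is precisely the content of the stability condition \eqref{stab}, its sum being encoded in $\mathcal{H}(|\eta|,|\zeta|)$. I expect the main obstacle to lie exactly here: one must choose the weights so that the gain of a reflected wave in one region is offset by the weighted decrease in the adjacent region, and the novel Simplified solver—rather than the non-physical waves of \cite{Bressanbook}—is what makes this balance hold and keeps $F$ strictly controlled.

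Granted that $F$ does not increase, the bound $F(0)<\mathcal{K}(\mathcal{H})$ guaranteed by \eqref{hyp2} propagates in time and yields a uniform bound on the total variation of $(\log p,u/a)$; the strict monotonicity of $\mathcal{K}$, with $\mathcal{K}(r)\to+\infty$ as $r\to0^+$ and $\mathcal{K}(r)\to0$ as $r\to+\infty$, reflects that small interfaces tolerate large data and conversely. Because $v_o\ge\underline{v}>0$ and $\log p$ stays bounded, while $\lambda$ takes only the three values $\lambda_\ell,\lambda_m,\lambda_r$ and $u$ is bounded by its total variation, the approximate solutions take values in a fixed compact subset of $\Omega$. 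The uniform $\BV$ bound then gives, via Helly's theorem, a subsequence converging in $\Lloc{1}$ to a limit $(v,u,\lambda)$ with $(v(\cdot,t),u(\cdot,t))\in\Linfty([0,\infty[;\BV(\reali))$.

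Finally I would prove consistency, namely that the limit is a weak entropic solution of \eqref{eq:system}, \eqref{init-data}. This amounts to showing that the errors of the Accurate solver and of the non-entropic composite waves of the Simplified solver vanish in the limit, a standard argument once one establishes the decay estimate for the total strength of the reflected waves announced for Section~\ref{sec:convergence}. The degenerate case $\eta=\zeta=0$ is immediate: then $\mathcal{H}=0$, no reflection occurs, the system decouples into independent $p$-systems, and Nishida's estimate \cite{Nishida68} applies with $\mathcal{K}(\mathcal{H})$ replaced by $+\infty$.
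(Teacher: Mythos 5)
Your outline reproduces the paper's general architecture (front tracking with the Accurate/Simplified solvers, a weighted functional $F$, Helly compactness, consistency via decay of the reflected waves), but two of your central steps are not correct as stated, and they conceal the actual quantitative content of the theorem. First, for interactions of two waves of the same family away from the interfaces you invoke ``classical quadratic Glimm interaction estimates.'' For \emph{large} data these cannot give the decrease of any Glimm-type functional; that is precisely why a Nishida-type argument is needed. In the paper, the decrease of $F$ at such interactions (see \eqref{Delta_L_xi_13}, \eqref{Delta_L_xi_13_SR} under conditions \eqref{cond13}, \eqref{cond13bis}) rests on the special structure of the pressure \eqref{eq:pressure} through the identities \eqref{eq:impo3}, and on the damping coefficient $c$ of Lemma~\ref{lem:shock-riflesso}: the shock weight must satisfy $\xi\le 1/c(m_o)$, where $m_o$ is an a priori bound \eqref{rogna} on the strength of every interacting shock. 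Hence the monotonicity of $F$ is \emph{conditional} on \eqref{rogna}, while \eqref{rogna} is in turn recovered from the monotonicity of $F$ together with \eqref{eq:boundL-bis}; this circularity is resolved by the induction in Proposition~\ref{global}. Your proposal treats the decrease of $F$ as unconditional (``granted that $F$ does not increase, the bound propagates''), so this bootstrap, and with it the reason why a smallness condition on the data is needed at all for the monotonicity of $F$, is missing.

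Second, the theorem asserts the \emph{existence} of a strictly decreasing function $\mathcal{K}$ with prescribed limits, so the proof must construct it; your inequality ``$F(0)<\mathcal{K}(\mathcal{H})$'' is not how \eqref{hyp2} enters, and indeed the weight $1/(1+\mathcal{H})$ does not appear in $F$ (the asymmetry of \eqref{F} is carried by $\xi$ versus $\xi^2$ and by the $K$'s). In the paper, hypothesis \eqref{hyp2} is used to produce a value of $m_o$ satisfying simultaneously \eqref{hyp1}, i.e.\ $c(m_o)<1/\bigl(1+\mathcal{H}(|\eta|,|\zeta|)\bigr)$ --- which is exactly what makes the interval \eqref{eq:cmxi} for $\xi$ non-empty --- and the weighted bound \eqref{hyp2-1} on the initial variation, which via \eqref{eq:tvpm} yields \eqref{eq:boundL-bis}. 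Eliminating $m_o$ between the decreasing function $w(m_o)=1/c(m_o)-1$ and the increasing function $z(m_o)=2m_o\,c(m_o)$ gives $\mathcal{K}(r)=z\left(w^{-1}(r)\right)$, the explicit formula \eqref{eq:K-explicit}, from which the monotonicity and the limits of $\mathcal{K}$ follow. Without this elimination step you have established nothing about $\mathcal{K}$, so the statement as formulated remains unproved even granting all your other claims.
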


The definition of the threshold function $\mathcal{K}$ is given in  \eqref{eq:K-explicit} and is the same as \cite[(6.20)]{ABCD}.
Hypothesis \eqref{hyp2} can be interpreted as follows: the larger $|\eta|,|\zeta|$ can be taken, the smaller the total variation of $p_o,u_o$ must be;
vice versa, the smaller are $|\eta|,|\zeta|$, the larger can be the total variation of $p_o,u_o$.

Consider, now, the case when one of the two phase-waves tends to zero, say $|\eta|\to 0$.
Then $\mathcal{H}(|\eta|,|\zeta|)\to\mathcal{H}(0,|\zeta|)=|\zeta|$ and \eqref{hyp2} becomes formally
\begin{equation}\label{eq:hyp2improved}
\tv_{x<\xa}\left(\log(p_o),\frac{u_o}{a_\ell}\right)+\frac{1}{1+|\zeta|}\tv_{\xa<x<\xb}\left(\log(p_o),\frac{u_o}{a_m}\right)
+ \tv_{x>\xb}\left(\log(p_o),\frac{u_o}{a_r}\right)<\mathcal{K}(|\zeta|)\,,
\end{equation}
which improves \cite[(2.3)]{ABCD} by allowing to take larger total variation of the data for  $x\in\,]\xa,\xb[$. Indeed, when $|\eta|=0$,
we will see in Remark~\ref{rem:hyp2} that hypothesis \eqref{eq:hyp2improved} can be improved by
\begin{equation}\label{eq:hyp2improved2}
\frac{1}{1+|\zeta|}\tv_{x<\xb}\left(\log(p_o),\frac{u_o}{a_m}\right) + \tv_{x>\xb}\left(\log(p_o),\frac{u_o}{a_r}\right)<\mathcal{K}(|\zeta|)\,,
\end{equation}
by which the total variation can be taken larger on the entire interval $]-\infty,\xb[$.

Theorem~\ref{thm:main} improves also the main result in \cite{amadori-corli-siam} when restricted to the case of two contact discontinuities, not only because $\mathcal{K}$ is sharper than $H$ of \cite[Theorem 3.1]{amadori-corli-source} (see Section~\ref{subsec:proof} below), but also because the total variation of the initial data (thanks to the coefficient of the middle term in \eqref{hyp2}) can be larger in $\mathcal{M}$. Recall that $\mathcal{M}$ is the more liquid region if $a(\lambda)$ is increasing. The asymmetrical character of \eqref{hyp2} is due to the particular choice of the decreasing functional $F$ used to estimate the total variation of the approximate solutions, see Section~\ref{sec:interaction}.

We also notice that a slight improvement of condition \eqref{hyp2} in Theorem~\ref{thm:main} would follow from the use of the Riemann coordinates,
see Remark~\ref{rem:RI}.

We conclude this section by extracting some more information from \eqref{hyp2}; with this aim we introduce the sub-level sets of $\mathcal{H}$,
\[
\mathcal{D}_c=\left\{\left(|\eta|,|\zeta|\right)\in\mathcal{D}:\quad  \mathcal{H}\left(|\eta|,|\zeta|\right)<c\right\}\,,\qquad c>0\,, 
\]
see Figure \ref{fig:HH}. Since $\mathcal{K}$ is decreasing, then for every $(|\eta|,|\zeta|)\in \mathcal{D}_c$ condition \eqref{hyp2} holds if
\begin{equation*}
\tv_{x<\xa}\left(\log(p_o),\frac{u_o}{a_\ell}\right) + \tv_{\xa<x<\xb}\left(\log(p_o),\frac{u_o}{a_m}\right) + \tv_{x>\xb}\left(\log(p_o),\frac{u_o}{a_r}\right)< \mathcal{K}(c)\,.
\end{equation*}
In particular, we have $\mathcal{K}(2)=2\log(2+\sqrt 3)/3$ and the domain $\mathcal{D}_2$ includes the segments $[0,2[$
on each axis. Therefore, for $\eta=0$ or $\zeta=0$ we recover a slightly better condition than \cite[(2.5)]{ABCD}. We notice that the $2$-level set of $\mathcal{H}$ has a particular simple expression: it is the graph of the function
$\zeta(|\eta|) = 2(2-|\eta|)/(2+|\eta|)$.


\begin{figure}[htbp]
   \begin{center}
       \includegraphics[width=8cm]{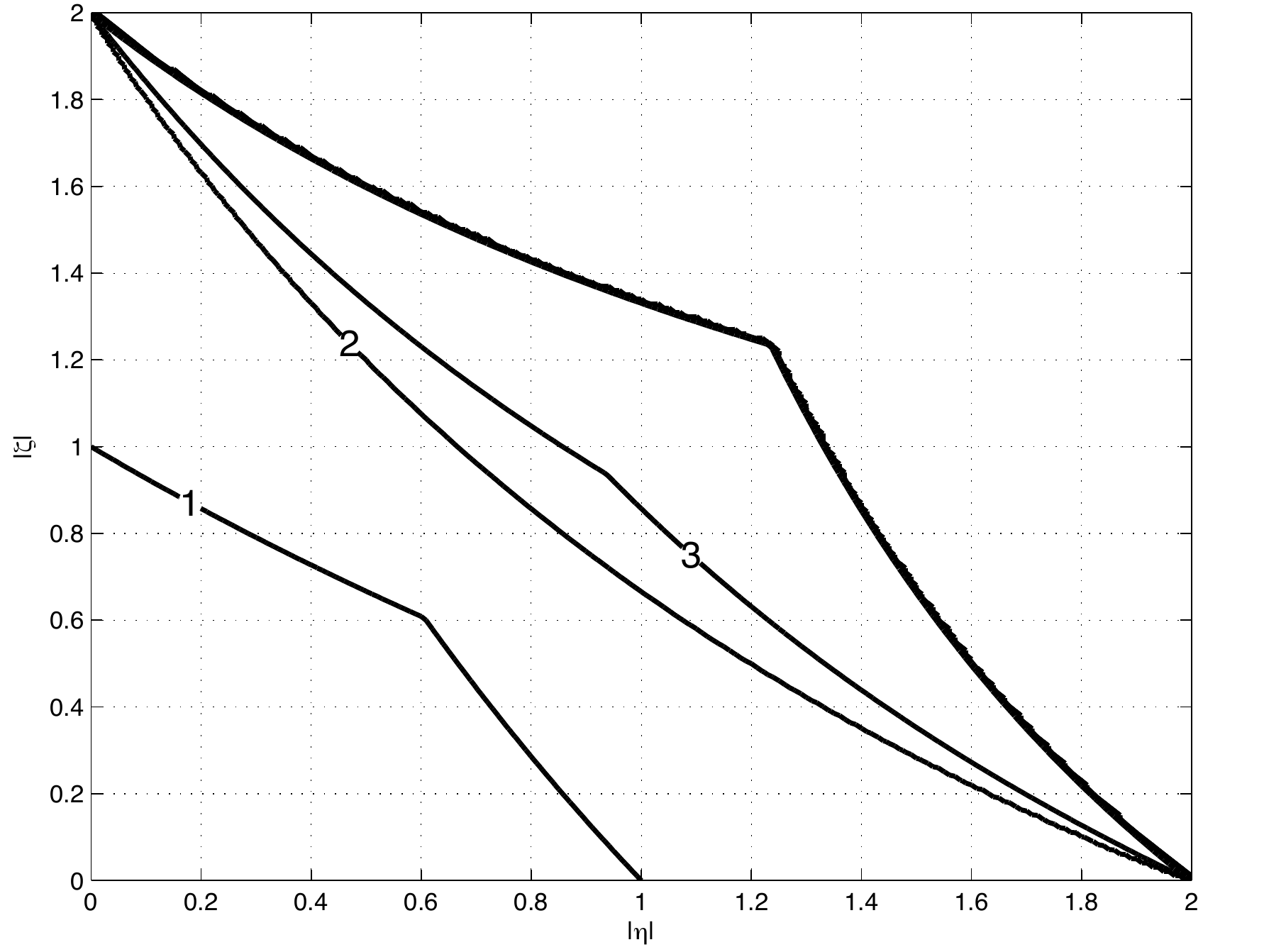}
   \end{center}
   \vspace{-3ex}
   \caption{\label{fig:HH}
   Sets of level $c$ of the function $\mathcal{H}$: cases $c=1,2,3$.}
\end{figure}

\section{The Riemann Problem}\label{sec:Riemann}
\setcounter{equation}{0}

In this section we collect some basic facts about system \eqref{eq:system};  we refer to \cite{ABCD,AmCo06-Proceed-Lyon,amadori-corli-siam}
for more details and to \cite{Bressanbook, Dafermos} for generalities on Riemann problems. As anticipated in the Introduction, in addition to the usual
Lax waves used in the theory of conservation laws, we introduce suitable composite waves which sum up the effects of each contact discontinuity and of
certain reflected waves. Finally, we present two Riemann solvers that use such composite waves.

For $i=1,3$, the $i$-th right shock-rarefaction curves $\Phi_i$ through the point $U_o = (v_o,u_o,\lambda_o)\in\Omega$ for \eqref{eq:system}
are as in \cite{amadori-corli-siam}
\begin{equation}
v \mapsto \Phi_i(\eps_i)(U_o)=\left(v,u_o + 2a(\lambda_o) h(\eps_i),\lambda_o\right)\,,\qquad v>0\,, \ i=1,3\,,
\label{eq:lax13}
\end{equation}
where the strength $\eps_i$ of an $i$-wave is defined as
\begin{equation}\label{eq:strengths}
\eps_1=\frac{1}{2}\log\left(\frac{v}{v_o}\right)=\frac{1}{2}\log\left(\frac{p}{p_o}\right)\,,
\qquad \eps_3=\frac{1}{2}\log\left(\frac{v_o}{v}\right)=\frac{1}{2}\log\left(\frac{p_o}{p}\right)
\end{equation}
and the function $h$ is defined by
\begin{equation}\label{h}
h(\eps)= \begin{cases}
\eps& \mbox{ if } \eps \ge 0\,,\\
\sinh \eps& \mbox{ if } \eps < 0\,.
\end{cases}
\end{equation}
Rarefaction waves have positive strengths and shock waves have negative strengths. The $i$-th integral curve through $U_o\in \Omega$ is denoted by
$I_i(\eps)(U_o)$, for $\eps\in\reali$ and $i=1,3$; two states $U$ and $I_i(\eps)(U)$ are connected by an $i$-rarefaction wave iff $\eps>0$. The wave
curve corresponding to the second characteristic field through $U_o\in\Omega$ is given by
\begin{equation*}
\lambda\mapsto\left(v_o\ds\frac{a^2(\lambda)}{a^2(\lambda_o)},
u_o,\lambda\right)\,,\qquad \lambda\in[0,1],
\end{equation*}
and the strength of a $2$-wave is
\begin{equation*}
\eps_2 = 2\, \frac{a(\lambda)-a(\lambda_o)}{a(\lambda)+a(\lambda_o)}\,.
\end{equation*}

\bigskip
For starters, we prove a result similar to \cite[Proposition 3.2]{amadori-corli-siam}. For $\lambda_\pm\in[0,1]$, we use the notation
$a_{\pm}=a(\lambda_{\pm}),\,p_{\pm}=p(v_{\pm},\lambda_{\pm})$.

\begin{proposition}\label{prop:RP}
Fix two functions $\theta_1,\theta_3$ that can be either the identity $\Id$ or the function $h$ defined in \eqref{h}.
For any pair of states $U_-=(v_-,u_-,\lambda_-),U_+=(v_+,u_+,\lambda_+)\in\Omega$, there exist unique $\eps_1,\eps_3\in \reali$ such that:
\begin{equation}\label{impo}
\eps_3-\eps_1=\frac{1}{2}\log\left(\frac{p_+}{p_-}\right)\,, \qquad a_-\theta_1(\eps_1)+a_+\theta_3(\eps_3)=\frac{u_+-u_-}{2}\,.
\end{equation}
\end{proposition}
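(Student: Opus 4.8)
The plan is to decouple the system in \eqref{impo} into a single scalar equation. The first relation is affine in $(\eps_1,\eps_3)$ and immediately gives $\eps_3 = \eps_1 + \tfrac12\log(p_+/p_-)$, so the pair is determined as soon as $\eps_1$ is known. Substituting this into the second relation reduces the problem to finding $\eps_1\in\reali$ solving
\[
g(\eps_1) := a_-\,\theta_1(\eps_1) + a_+\,\theta_3\!\left(\eps_1 + \tfrac12\log(p_+/p_-)\right) = \frac{u_+-u_-}{2}\,.
\]
Existence and uniqueness of the pair $(\eps_1,\eps_3)$ will then follow once I show that the scalar map $g$ is a bijection of $\reali$ onto $\reali$.

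The core observation, which I would isolate first, is that each admissible choice for $\theta_1$ and $\theta_3$ --- namely $\Id$ or the function $h$ --- is a continuous, strictly increasing bijection of $\reali$ onto $\reali$. For $\Id$ this is trivial. For $h$ as defined in \eqref{h}, I would check continuity at $\eps=0$ (both branches vanish there), strict monotonicity on each branch (the derivative equals $1$ on $\eps\ge 0$ and $\cosh\eps>0$ on $\eps<0$, so in fact $h\in\C{1}$ with positive derivative throughout), and surjectivity, since $h(\eps)=\eps\to+\infty$ as $\eps\to+\infty$ while $h(\eps)=\sinh\eps\to-\infty$ as $\eps\to-\infty$.

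With this structural fact in hand, the conclusion is a standard monotonicity argument. Because $a_-,a_+>0$ and a shift of the argument preserves monotonicity, $g$ is a positively weighted sum of two strictly increasing continuous functions, hence itself strictly increasing and continuous; moreover $g(\eps_1)\to+\infty$ as $\eps_1\to+\infty$ and $g(\eps_1)\to-\infty$ as $\eps_1\to-\infty$. By the intermediate value theorem together with strict monotonicity, $g$ attains the value $(u_+-u_-)/2$ at exactly one point $\eps_1$, and $\eps_3$ is then recovered from the affine relation, giving the claimed unique pair. I do not expect a genuine obstacle here: the only point demanding a little care is the behavior of $h$ across the junction $\eps=0$, where the definition switches branches, since it is precisely the global monotonicity and surjectivity of $h$ that make $g$ a bijection and thereby deliver existence and uniqueness at once.
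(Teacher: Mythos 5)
Your proof is correct, and it rests on the same underlying mechanism as the paper's — eliminate $\eps_3$ through the affine first equation of \eqref{impo} and solve a scalar, strictly monotone equation — but it packages that mechanism differently. The paper splits into the four systems \eqref{old}--\eqref{new}: for $\theta_1=\theta_3=h$ it proves nothing new, citing \cite[Proposition 3.2]{amadori-corli-siam}; for $\theta_1=\theta_3=\Id$ it observes that the system is linear; and only in the mixed cases does it argue as you do, reducing (for $\theta_1=h$, $\theta_3=\Id$, with $A=\tfrac12\log(p_+/p_-)$, $B=(u_+-u_-)/2$, $k=a_+/a_->0$) to $G(\eps_1)=B/a_--kA$ with $G(x)=kx+h(x)$, and invoking that $G$ is ``invertible and onto.'' Your single argument --- both $\Id$ and $h$ are continuous, strictly increasing bijections of $\reali$ onto $\reali$, hence so is $g(\eps_1)=a_-\theta_1(\eps_1)+a_+\theta_3(\eps_1+A)$ for every admissible choice of $\theta_1,\theta_3$ --- covers all four cases at once, including the $(h,h)$ case that the paper outsources, so it makes the proposition self-contained; moreover, your verification that $h\in\C{1}$ with positive derivative and $h(\eps)\to\pm\infty$ as $\eps\to\pm\infty$ is exactly the content behind the paper's unproved assertion about $G$. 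What the paper's case-by-case treatment buys instead is a little extra information: the explicitly solvable linear system when $\theta_1=\theta_3=\Id$, and the connection of the $(h,h)$ case to the genuine Lax solver $\RLL$ established in the earlier work.
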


\begin{proof}
Let us call $\log(p_+/p_-)/2=:A$ and $(u_+-u_-)/2=:B$, since they are two constant quantities once we fixed $U_-$ and $U_+$.
Thus, we have four possible cases to examine:
\begin{align}
&\left\{
\begin{array}{ll}
\eps_3-\eps_1=A\,,\\
a_- h(\eps_1)+a_+ h(\eps_3)=B\,,
\end{array}
\right.
& &\left\{
\begin{array}{ll}
\eps_3-\eps_1=A\,,\\
a_- \eps_1+a_+ \eps_3=B\,,
\end{array}
\right.\label{old}\\
&\left\{
\begin{array}{ll}
\eps_3-\eps_1=A\,,\\
a_- h(\eps_1)+a_+ \eps_3=B\,,
\end{array}
\right.
& &\left\{
\begin{array}{ll}
\eps_3-\eps_1=A\,,\\
a_- \eps_1+a_+ h(\eps_3)=B\,.
\end{array}
\right.\label{new}
\end{align}
System \eqref{old}$_{1}$ ($\theta_1=\theta_3=h$) has already been solved in \cite[Proposition 3.2]{amadori-corli-siam},
while system \eqref{old}$_{2}$ is linear. As for \eqref{new}, it suffices to study just one of the two systems, for example \eqref{new}$_{1}$ (the other one
is analogous). In this case, setting $k=a_+/a_-$, it holds
$h(\eps_1)+k\eps_1=B/(a_{-})-kA$. Thus, if $G(x):=kx+h(x)$, we have
$G(\eps_1)=B/(a_{-})-kA$.
Since $G$ is invertible and onto, this gives $\eps_1=G^{-1}(B/(a_{-})-kA)$.
\end{proof}

\begin{remark}\label{rem:preRS}
Notice that only system \eqref{old}$_1$ always gives an actual Lax solution to the Riemann problem for \eqref{eq:system} with initial data
\begin{equation}\label{eq:RP}
U(x,0)=\left\{
\begin{array}{ll}
U_- & \hbox{ if }x<0\,,
\\
U_+ & \hbox{ if }x>0\,,
\end{array}
\right.
\end{equation}
as the juxtaposition of a $1$-wave of strength $\eps_1$, a $2$-wave $\delta=2(a_+-a_-)/(a_++a_-)$ and a $3$-wave of strength $\eps_3$, see \cite[Proposition 3.2]{amadori-corli-siam}. In general, this is not true for the other three cases.
\end{remark}

When solving an interaction with a $2$-wave $\delta$, we sometimes make use of a Riemann solver that attaches certain reflected waves to $\delta$; the outcome is a stationary composite wave, which is made of the composition of a wave related to an integral curve for the first characteristic field, the $2$-wave $\delta$, and, finally, a wave related to an integral curve for the third characteristic field.

We use the symbols `$L$' to refer to the Lax curves $\Phi_i$ and `$I$' to refer to the Integral curves $I_i$, $i=1,3$.  
Then, Proposition~\ref{prop:RP} allows us to give the following important definition.

\begin{definition}[Pre-Riemann solver]\label{def:preRS}
For any choice of $\theta_1,\theta_3$ as in Proposition~\ref{prop:RP}, the \emph{Pre-Riemann solver} $R_{\theta_1\!\theta_3}:\Omega\times\Omega\to \reali\times ]-2,2[\times \reali$ is the map defined by
\begin{equation}\label{eq:Rsolver}
R_{\theta_1\!\theta_3}(U_-,U_+)=
(\eps_1,\delta,\eps_3)\,,
\end{equation}
where $\eps_1,\eps_3$ are as in \eqref{impo} and $\delta=2(a_+-a_-)/(a_++a_-)$. The two subscripts in $\theta_1,\theta_3$ stand for the choice of $1,3$-wave curves ($L$ or $I$) along which $\eps_1,\eps_3$ are taken. More precisely, it holds $\theta_i=h$ for $\eps_i$ along Lax curves, while $\theta_i=\Id$ for $\eps_i$ along integral curves. Then, by Proposition~\ref{prop:RP} we get four Pre-Riemann solvers that we denote by $\RLL$, $\RII$, $\RLI$ and $\RIL$, respectively.
\end{definition}

Notice that $\RLL$ is an actual Riemann solver by Remark~\ref{rem:preRS} and $\RIL$, $\RLI$ are used in connection with the Simplified Riemann Solver, see Proposition~\ref{prop:accsimpl}. We do not assign any speed to $\eps_1,\eps_3$ when they are taken along integral curves $I$; indeed, these waves shall be sticked to the phase wave and can be thought as being stationary. In particular, $\RII$ is used to define composite waves as in the following definition.

\begin{definition}[Composite wave]\label{def:compositewave}
A composite wave $\delta_0=(\delta_0^1,\delta,\delta_0^3)$ associated to a $2$-wave $\delta$ and connecting two states
$U_-=(v_-,u_-,\lambda_-)$ and $U_+=(v_+,u_+,\lambda_+)$ of $\Omega$, with $\lambda_-\neq\lambda_+$, is the wave with
\emph{zero speed} defined by $\delta_0=\RII(U_-,U_+)$. We write $|\delta_0|=|\delta_0^1|+|\delta_0^3|$.
\end{definition}

Notice that $\delta_0$ reduces to a $2$-wave as long as $\delta_0^1=\delta_0^3=0$.
We denote by $\eta_0$ and $\zeta_0$ the two composite waves associated to $\eta$ and $\zeta$, respectively; see Figure~\ref{fig:1}.


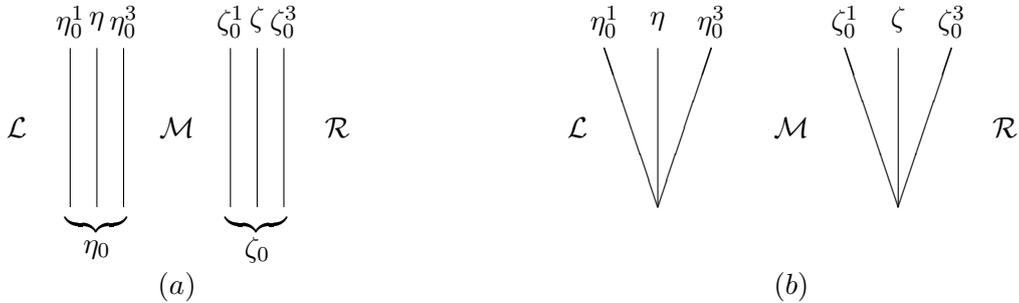
\begin{figure}[htbp]
\begin{picture}(120,120)(-120,10)
\setlength{\unitlength}{1pt}

\put(-70,0){
\put(10,40){\line(0,1){60}}
\put(20,40){\line(0,1){60}}
\put(30,40){\line(0,1){60}}
\put(10,110){\makebox(0,0){$\eta_0^1$}}
\put(20,110){\makebox(0,0){$\eta$}}
\put(30,110){\makebox(0,0){$\eta_0^3$}}

\put(8,37){$\underbrace{\phantom{aaaa}}$}
\put(20,24){\makebox(0,0){$\eta_0$}}

\put(70,40){\line(0,1){60}}
\put(80,40){\line(0,1){60}}
\put(90,40){\line(0,1){60}}
\put(70,110){\makebox(0,0){$\zeta_0^1$}}
\put(80,111){\makebox(0,0){$\zeta$}}
\put(90,110){\makebox(0,0){$\zeta_0^3$}}

\put(68,37){$\underbrace{\phantom{aaaa}}$}
\put(80,24){\makebox(0,0){$\zeta_0$}}

\put(-10,70){\makebox(0,0){$\mathcal{L}$}}
\put(50,70){\makebox(0,0){$\mathcal{M}$}}
\put(110,70){\makebox(0,0){$\mathcal{R}$}}

\put(50,10){\makebox(0,0){$(a)$}}
}

\put(200,0){
\put(-40,40){\line(1,3){20}} \put(-40,40){\line(0,1){60}}
\put(-40,40){\line(-1,3){20}}
\put(-60,110){\makebox(0,0){$\eta_0^1$}}
\put(-40,110){\makebox(0,0){$\eta$}}
\put(-20,110){\makebox(0,0){$\eta_0^3$}}

\put(50,40){\line(1,3){20}} \put(50,40){\line(0,1){60}}
\put(50,40){\line(-1,3){20}}
\put(30,110){\makebox(0,0){$\zeta_0^1$}}
\put(50,111){\makebox(0,0){$\zeta$}}
\put(70,110){\makebox(0,0){$\zeta_0^3$}}

\put(-70,70){\makebox(0,0){$\mathcal{L}$}}
\put(10,70){\makebox(0,0){$\mathcal{M}$}}
\put(90,70){\makebox(0,0){$\mathcal{R}$}}

\put(10,10){\makebox(0,0){$(b)$}}
}

\end{picture}
\caption{\label{fig:1}{The composite waves in the $(x,t)$ plane: in $(a)$ $\eta_0$ and $\zeta_0$ are drawn as three parallel close lines, while $(b)$ is the auxiliary picture that  is used to determine the states in the interactions, see Figure~\ref{fig:interactions}.}}
\end{figure}


\noindent Remark that in Figure~\ref{fig:1} $(b)$ the $\eta_0^i,\zeta_0^i$ components, $i=1,3$, may be non-entropic waves: they are depicted as fronts with negative speed ($i=1$) and positive speed ($i=3$) in order to easily understand how to handle the interactions.

In this way, we are left to deal with waves of family $1,3$ and two distinct composite waves belonging to a fictitious $0$-family.
Notice also that, once we fix $U_-$, the set of states $U_+$ that can be connected to $U_-$ by a composite wave does not describe a curve in the
$(v,u)$ plane, but the whole half-plane $v>0$.

Before proceeding with the detailed description of the two new Riemann solvers, we insert here the following useful lemma.
Now and then we will make an inappropriate use of the term `waves' to indicate both actual physical waves (i.e.\ connecting states that lie on a Lax curve)
and not (i.e.\ when referring to states that lie on a general integral curve or on a combination of Lax curves and integral curves).

\begin{lemma}[Commutation of $i$-waves]\label{lem:commuta}
Let $i=1,3$ and $\alpha_i,\beta_i\in\reali$. If two states $U_-,U_+\in \Omega$ in the same phase ($\lambda_-=\lambda_+$) are connected by an
$i$-wave of strength $\alpha_i$ followed by an $i$-wave of strength $\beta_i$, then they can be connected also by an $i$-wave $\beta_i$ followed by
an $i$-wave $\alpha_i$.
\end{lemma}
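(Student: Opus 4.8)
The plan is to reduce the statement to the elementary observation that, within a fixed phase, the action of an $i$-wave on the $(v,u)$-coordinates splits into two decoupled abelian operations. Fix $\lambda_-=\lambda_+=:\lambda$ and write $a=a(\lambda)$. By the wave-curve formula \eqref{eq:lax13}, the definition of the strengths \eqref{eq:strengths}, and the choice $\theta_i=\Id$ along integral curves (as in Definition~\ref{def:preRS}), an $i$-wave of strength $\eps$ — be it along a Lax curve or along an integral curve — maps a state $(v,u,\lambda)$ to
\[
\bigl(v\,e^{2\sigma_i\eps},\; u+2a\,\theta_i(\eps),\;\lambda\bigr),\qquad \sigma_1=+1,\ \sigma_3=-1,
\]
where $\theta_i=h$ for a Lax wave and $\theta_i=\Id$ for a wave along the integral curve. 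The crucial feature, valid precisely because $\lambda$ (hence $a$) is constant, is that the specific volume is rescaled by the multiplicative factor $e^{2\sigma_i\eps}$ and the velocity is shifted by the additive constant $2a\,\theta_i(\eps)$, with neither increment depending on the current value of the other coordinate.

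First I would denote by $T_\eps$ the above map and compose two of them. Applying $T_{\alpha_i}$ and then $T_{\beta_i}$ sends $(v,u,\lambda)$ to
\[
\bigl(v\,e^{2\sigma_i(\alpha_i+\beta_i)},\; u+2a\bigl(\theta_i(\alpha_i)+\theta_i(\beta_i)\bigr),\;\lambda\bigr).
\]
Since the volume factor depends only on the sum $\alpha_i+\beta_i$ and the velocity shift only on the sum $\theta_i(\alpha_i)+\theta_i(\beta_i)$, the result is manifestly symmetric under the exchange $\alpha_i\leftrightarrow\beta_i$; equivalently $T_{\beta_i}\circ T_{\alpha_i}=T_{\alpha_i}\circ T_{\beta_i}$, because the multiplicative group acting on $v$ and the additive group acting on $u$ are abelian and the two actions \emph{decouple}. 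Hence the two juxtapositions connect $U_-$ to exactly the same $U_+$, even though the intermediate state generally changes. I would also point out that the argument is insensitive to whether each of the two waves is of Lax or of integral type, since only the sum of the two velocity contributions enters.

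To finish, I would check that the reversed construction is legitimate in the state space: the intermediate state produced by applying $\beta_i$ first still has strictly positive specific volume, namely $v_-e^{2\sigma_i\beta_i}>0$, and the same phase $\lambda$, so it lies in $\Omega=\,]0,+\infty[\times\reali\times[0,1]$, and both strengths $\alpha_i,\beta_i$ remain well defined along the appropriate wave curve. I do not expect a genuine obstacle: the statement is essentially a bookkeeping identity, and the only point deserving emphasis is the decoupling of the $v$- and $u$-increments. This is exactly what fails across a phase boundary, where $a$ jumps, and is therefore the structural reason the hypothesis $\lambda_-=\lambda_+$ cannot be dropped.
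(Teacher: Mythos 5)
Your proof is correct and follows essentially the same route as the paper's: both arguments reduce to the explicit observation that an $i$-wave rescales $v$ multiplicatively by a factor depending only on the strength and shifts $u$ additively by $2a\,\theta_i(\cdot)$, so the composed final state depends only on $\alpha_i+\beta_i$ and on the sum of the velocity contributions, hence is symmetric under exchanging the two waves (of either Lax or integral type). Your added check that the intermediate state remains in $\Omega$ is a harmless refinement, not a different method.
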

\begin{proof}
Assume $i=3$ (the other case is analogous) and fix $\theta_3^{\alpha_3},\theta_3^{\beta_3}$ to be either $h$ or $\Id$, see Figure~\ref{fig:commutation}. 
If $U^{*}=(v^{*},u^{*},\lambda^{*})$ is the final state reached starting from $U_-$ and moving first along $\beta_3$ and then along $\alpha_3$, 
then trivially it holds $\lambda^{*}=\lambda_+$ and
$$
v^{*}=v_-\exp(-2\beta_3-2\alpha_3)=v_+\,, \qquad
u^{*}=u_-+2a_-\left(\theta_3^{\beta_3}(\beta_3)+\theta_3^{\alpha_3}(\alpha_3)\right)=u_+\,,
$$
that means $U^{*}=U_+$.
\end{proof}


\begin{figure}[htbp]
   \begin{center}
       \includegraphics{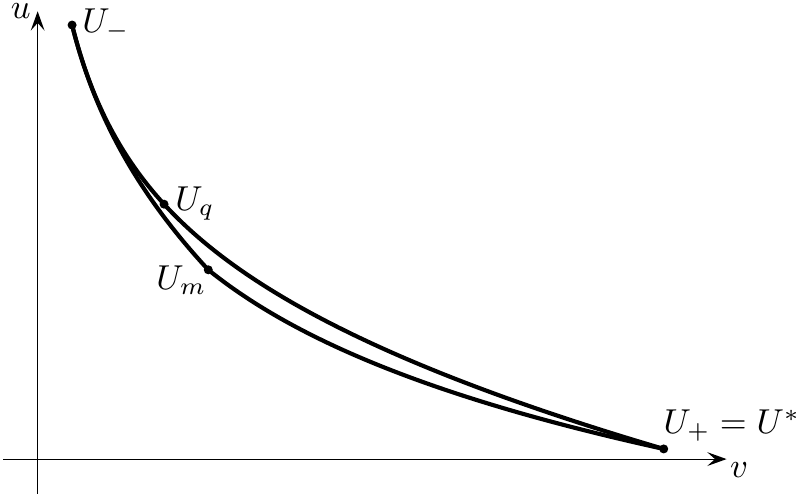}
   \end{center}
   \vspace{-3ex}
   \caption{\label{fig:commutation}
   The commutation of $i$-waves: case $i=3$, $\alpha_3,\beta_{3}<0$,
   $\theta_3^{\alpha_3}=h$ and $\theta_3^{\beta_3}=\Id$.
   Here $U_m$ and $U_{q}$ are the states connected to $U_{-}$ along the $3$-Lax
   curve by $\alpha_3$ and, respectively, along the $3$-integral curve by $\beta_3$.}
\end{figure}


\begin{remark}
When $\theta_i^{\alpha_i}=\theta_i^{\beta_i}=h$, Lemma~\ref{lem:commuta} is a consequence of the invariance by translation of Lax curves
for the $p$-system with $\gamma=1$; see \cite{Nishida68}.
\end{remark}

Now, we are ready to describe the two Riemann solvers that will be needed in case of interactions with $\eta_0$ and $\zeta_0$ at positive times:
we use an \emph{Accurate solver} when the interacting wave has size bigger than a threshold $\rho$ to be determined and a \emph{Simplified solver}
otherwise.

We denote by $\delta_i$ (and $\eps_i$) the interacting waves (the waves produced by the interaction, respectively), for $i=1,3$; note that, taking for
simplicity $\delta$ to be equal either to $\eta$ or to $\zeta$, we use the same notation $\delta_0=(\delta_0^1,\delta,\delta_0^3)$
(and $\eps_0=(\eps_0^1,\delta,\eps_0^3)$) to denote both $\eta_0$ and $\zeta_0$ as interacting waves (and as outgoing waves, respectively).

\begin{proposition}\label{prop:accsimpl}
Let $i=1,3$ and consider the interaction at a time $t>0$ of a composite wave $\delta_0=(\delta_0^1,\delta,\delta_0^3)$ with an $i$-wave of strength $\delta_i$; we refer to  Figure~\ref{fig:interactions}. Then, the emerging Riemann problem with initial states $U_-,U_+$ can be solved by means of $R_{\theta_1\!\theta_3}$ in one of the two following ways. Denote $\widetilde{U}_-=I_1(\delta_0^1)(U_-)$ and
$\widetilde{U}_+=I_3(-\delta_0^3)(U_+)$.
\begin{enumerate}
\item \emph{Accurate Riemann solver}. The solution is formed by waves $\eps_1,\eps_0,\eps_3$, where $(\eps_1,\delta,\eps_3)=\RLL(\widetilde{U}_-,\widetilde{U}_+)$ and $\eps_0=\delta_0$.
\item \emph{Simplified Riemann solver}. We distinguish case $i=1$ and $i=3$: 
\begin{itemize}
\item[i)] for $i=1$, the solution is formed by waves $\eps_1,\eps_0$ such that $(\eps_1,\delta,\eps_3)=\RLI(\widetilde{U}_-,\widetilde{U}_+)$ and $\eps_0=(\delta_0^1,\delta,\delta_0^3+\eps_3)$;
\item[ii)] for $i=3$, the solution is formed by waves $\eps_0,\eps_3$ such that $(\eps_1,\delta,\eps_3)=\RIL(\widetilde{U}_-,\widetilde{U}_+)$ and $\eps_0=(\delta_0^1+\eps_1,\delta,\delta_0^3)$.
\end{itemize}
\end{enumerate}

In general, it holds $\theta_i=h$ in all cases; for any $\theta_j$, $j=1,3$, $j\neq i$, chosen between $\Id$ and $h$, the following relations are verified:
\begin{equation}\label{eq:rels}
\eps_3-\eps_1=\begin{cases}
-\delta_1 &\quad \text{if $i=1$}\,,\\
\delta_3 &\quad \text{if $i=3$}\,,
\end{cases}
\qquad a_- \theta_1(\eps_1)+a_+ \theta_3(\eps_3)=\begin{cases}
a_+ \theta_1(\delta_1) &\quad \text{if $i=1$}\,,\\
a_-\theta_3(\delta_3) &\quad \text{if $i=3$}\,.
\end{cases}
\end{equation}

Moreover, in all cases the signs of $\eps_1,\eps_3$ satisfy:
\begin{equation}\label{eq:sign}
\sgn\,\eps_i = \sgn\,\delta_i\,,
\qquad
\sgn\,\eps_j  = \begin{cases}
\sgn\,\delta\cdot \sgn\,\delta_i &\hbox{ if } i=1\,,
\\
-\sgn\,\delta\cdot \sgn\,\delta_i &\hbox{ if } i=3\,.
\end{cases}
\end{equation}
\end{proposition}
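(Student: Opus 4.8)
The plan is to carry out the case $i=1$ in full and obtain $i=3$ by the left--right symmetry of the two characteristic families. First I would fix the incoming configuration: just before the interaction the composite wave is stationary and the $1$-wave $\delta_1$, having negative speed, lies to its right, so there is an intermediate state $U_M$ with $U_-\xrightarrow{\delta_0}U_M\xrightarrow{\delta_1}U_+$, where $\delta_0=\RII(U_-,U_M)$ and $U_+=\Phi_1(\delta_1)(U_M)$. Existence and uniqueness of the outgoing $\eps_1,\eps_3$ is granted by Proposition~\ref{prop:RP} applied to the relevant pre-Riemann solver. Since $I_1,I_3$ leave the phase unchanged, $\widetilde U_-=I_1(\delta_0^1)(U_-)$ and $\widetilde U_+=I_3(-\delta_0^3)(U_+)$ carry the phases $\lambda_-$ and $\lambda_+=\lambda_M$; hence the coefficients in \eqref{impo} for the pair $(\widetilde U_-,\widetilde U_+)$ are exactly the $a_-,a_+$ of the statement.

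Next I would check that the prescribed waves actually solve the Riemann problem $(U_-,U_+)$. For the Accurate solver the candidate chain is $U_-\xrightarrow{\Phi_1(\eps_1)}\cdot\xrightarrow{\eps_0=\delta_0}\cdot\xrightarrow{\Phi_3(\eps_3)}U_+$. Commuting the outgoing $1$-wave $\eps_1$ past the $I_1(\delta_0^1)$-component of $\delta_0$, and symmetrically $\eps_3$ past the $I_3(\delta_0^3)$-component, by Lemma~\ref{lem:commuta}, rearranges this chain into $U_-\xrightarrow{I_1(\delta_0^1)}\widetilde U_-\xrightarrow{\Phi_1(\eps_1)}\cdot\xrightarrow{\delta}\cdot\xrightarrow{\Phi_3(\eps_3)}\widetilde U_+\xrightarrow{I_3(\delta_0^3)}U_+$, whose central portion is precisely $\RLL(\widetilde U_-,\widetilde U_+)$ and whose outer portions reproduce $U_\pm$ by the definition of $\widetilde U_\pm$. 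For the Simplified solver with $i=1$ the same commutation is used on the left, while on the right the reflected wave $\eps_3$, now taken along $I_3$ through $\RLI$, is absorbed into the $I_3(\delta_0^3)$-component by additivity of integral curves; this produces the stated $\eps_0=(\delta_0^1,\delta,\delta_0^3+\eps_3)$ and no free outgoing $3$-wave.

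The relations \eqref{eq:rels} are then nothing but \eqref{impo} read off the pair $(\widetilde U_-,\widetilde U_+)$, namely $\eps_3-\eps_1=\tfrac12\log(\widetilde p_+/\widetilde p_-)$ and $a_-\theta_1(\eps_1)+a_+\theta_3(\eps_3)=\tfrac12(\widetilde u_+-\widetilde u_-)$; it only remains to evaluate the two right-hand sides. Telescoping $\tfrac12\log(\widetilde p_+/\widetilde p_-)$ along $\widetilde U_+\to U_+\to U_M\to U_-\to\widetilde U_-$ and inserting the pressure identities coming from $\RII(U_-,U_M)$, from the $1$-wave $\delta_1$, and from the single integral waves defining $\widetilde U_\pm$, makes all the $\delta_0^1,\delta_0^3$ terms cancel and leaves $-\delta_1$; the same telescoping for the velocities collapses to $a_+h(\delta_1)$. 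Since these computations never use the form of $\theta_3$, the identity holds verbatim for the Accurate ($\theta_3=h$) and the Simplified ($\theta_3=\Id$) solver, which is the content of the phrase ``for any $\theta_j$''.

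For the signs \eqref{eq:sign} I would eliminate $\eps_1=\eps_3+\delta_1$ and write the velocity relation as $f(\eps_3)=a_+h(\delta_1)$ with $f(s)=a_-h(s+\delta_1)+a_+\theta_3(s)$ strictly increasing. The sign of $\eps_1$ comes from a contradiction: if, say, $\delta_1>0$ but $\eps_1\le 0$ then also $\eps_3<0$, so the left-hand side of the velocity relation is nonpositive while its right-hand side is positive; hence $\sgn\eps_1=\sgn\delta_1$. The sign of $\eps_3$ comes from comparing $f(0)=a_-h(\delta_1)$ with the target $a_+h(\delta_1)$: their difference $(a_--a_+)h(\delta_1)$ has sign $-\sgn\delta\cdot\sgn\delta_1$, so monotonicity of $f$ forces $\sgn\eps_3=\sgn\delta\cdot\sgn\delta_1$. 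The case $i=3$ is entirely symmetric. I expect the main obstacle to be neither the algebra nor the commutation step but precisely this sign analysis, which requires the full case distinction in $\sgn\delta_1$ and $\sgn\delta$ together with the care that each $a$-value is attached to the correct side, the phase changing only across the $2$-wave; once the monotone reformulation is in place, however, each case is immediate.
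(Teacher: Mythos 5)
Your proposal is correct, and its constructive part coincides with the paper's own proof: the same chain decomposition, the same use of Lemma~\ref{lem:commuta} to commute the outgoing Lax waves past the $I_1(\delta_0^1)$ and $I_3(\delta_0^3)$ components, the same absorption of the reflected wave into the composite one by additivity of integral curves, and the same derivation of \eqref{eq:rels} by evaluating \eqref{impo} on the pair $(\widetilde U_-,\widetilde U_+)$ (your telescoping is just the explicit form of the paper's computation of $\frac12\log(\widetilde p_+/\widetilde p_-)$ and $(\widetilde u_+-\widetilde u_-)/2$, and you correctly flag the checkpoint that integral curves preserve $\lambda$, so the coefficients stay $a_-,a_+$). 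Where you genuinely diverge is the sign analysis \eqref{eq:sign}. The paper argues by cases: for the Accurate solver it cites \cite{AmCo06-Proceed-Lyon}, and for the Simplified solver it splits on $\sgn\delta_i$ — when the incoming wave is a rarefaction the relevant pre-solver degenerates to $\RII$ and the system becomes linear (giving the explicit values $\eps_3=\delta_1\delta/2$, $\eps_1=-\delta_3\delta/2$), while for shocks it either identifies $\RIL$ with $\RLL$ by uniqueness or manipulates the identity \eqref{eq:delta1eq2} for the function $\Gamma$. Your argument instead eliminates $\eps_1=\eps_3+\delta_1$ and reads both signs off the strict monotonicity of $f(s)=a_-h(s+\delta_1)+a_+\theta_3(s)$: a contradiction for $\sgn\eps_1$, and the comparison of $f(0)=a_-h(\delta_1)$ with the target $a_+h(\delta_1)$ for $\sgn\eps_3$. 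This is uniform in the choice of $\theta_j$ and in all sign configurations, so it settles the Accurate and Simplified cases at one stroke and is shorter than the paper's route. What the paper's case analysis buys, and your route does not, are the quantitative by-products — the linear-case formulas, the reduction $\RIL=\RLL$ for $3$-shocks, and equation \eqref{eq:delta1eq2} — which are exactly the ingredients reused in the proof of the interaction estimates of Lemma~\ref{lem:intest}; so the paper's longer detour is not redundant in the overall architecture, but for the proposition as stated your proof is complete.
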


\begin{proof}
In the interaction of an $i$-wave $\delta_i$ with a composite wave $\delta_0$, we look at the interaction of $\delta_i$ with the $\delta$ component of $\delta_0$: indeed, $\delta_i$ crosses $\delta_0^j $, $j=1,3$, $j\neq i$, without changing strength by \cite[Lemma 5.4]{ABCD}. Then, we solve the Riemann problem with initial states $\widetilde{U}_-,\widetilde{U}_+$ by means of $R_{\theta_1\!\theta_3}$, with $\theta_1,\theta_3$ either $\Id$ or $h$. We proceed as follows.

\begin{enumerate}
\item \emph{Accurate Riemann solver}. After computing $\RLL(\widetilde{U}_-,\widetilde{U}_+)=(\eps_1,\delta,\eps_3)$, we let $\eps_1$ and $\eps_3$ commute with $\delta_0^1$ and $\delta_0^3$ respectively, in the sense of Lemma~\ref{lem:commuta}. In this way, they are free to propagate as outgoing waves of family $1,3$; see Figure~\ref{fig:interactions} $(a),(b)$ for a picture of case $i=3$. Then, the resulting composite wave connects $U_p$ to $U_q$, where $U_p=\Phi_1(\eps_1)(U_-)$ and $U_q=\Phi_3(-\eps_3)(U_+)$. Hence, $\eps_0=\RII(U_p,U_q)=(\delta_0^1,\delta,\delta_0^3)=\delta_0$.
\item \emph{Simplified Riemann solver}. We have to distinguish between case $i=1$ and $i=3$. Once the triple $(\eps_1,\delta,\eps_3)$ has been determined by $\RLI$ or $\RIL$, the idea is to `project' the reflected wave along the associated integral curve; see Figure~\ref{fig:interactions} $(c),(d)$ for a picture of case $i=3$.
\begin{itemize}
\item[i)] For $i=1$, we compute $\RLI(\widetilde{U}_-,\widetilde{U}_+)=(\eps_1,\delta,\eps_3)$ and let $\eps_1$ commute with $\delta_0^1$ by Lemma~\ref{lem:commuta}. The outgoing composite wave connects $U_p$ to $U_+$, where $I_1(\delta_0^1)(U_p)=\Phi_1(\eps_1)(\widetilde{U}_-)$
and $U_+=I_3(\delta_0^3+\eps_3)\circ\Phi_2(\delta)\circ\Phi_1(\eps_1)(\widetilde{U}_-)$. Hence, $\eps_0=\RII(U_p,U_+)=(\delta_0^1,\delta,\delta_0^3+\eps_3)$.
\item[ii)] For $i=3$, we compute $\RIL(\widetilde{U}_-,\widetilde{U}_+)=(\eps_1,\delta,\eps_3)$ and let $\eps_3$ commute with $\delta_0^3$ by Lemma~\ref{lem:commuta}. The outgoing composite wave connects $U_-$ to $U_q$,  
where $U_q=I_3(\delta_0^3)\circ\Phi_2(\delta)\circ I_1(\delta_0^1+\eps_1)(U_-)$. Hence, $\eps_0=\RII(U_-,U_q)=(\delta_0^1+\eps_1,\delta,\delta_0^3)$.
\end{itemize}
\end{enumerate}

To prove \eqref{eq:rels}, notice that for $i=1$ we use $\RLL$ or $\RLI$ (i.e.\ $\theta_1=h$), while for $i=3$ we use $\RLL$ or $\RIL$ (i.e $\theta_3=h$). Hence, \eqref{eq:rels}$_2$ is equivalent to
$$
a_- \theta_1(\eps_1)+a_+ \theta_3(\eps_3)=\begin{cases}
a_+ h(\delta_1) &\quad \text{if $i=1$}\,,\\
a_-h(\delta_3) &\quad \text{if $i=3$}\,.
\end{cases}
$$
By \eqref{eq:lax13} and \eqref{eq:strengths} we have that for $i=1,3$
$$
\frac{1}{2}\log \left(\frac{\widetilde{p}_+}{\widetilde{p}_-}\right) = \begin{cases} -\delta_1 &\quad \text{if $i=1$}\,,\\
\delta_3 &\quad \text{if $i=3$}\,,
\end{cases} \qquad   \frac{\widetilde{u}_+-\widetilde{u}_-}{2}=\begin{cases}
											\, a_+h(\delta_1) &\quad \text{if $i=1$}\,,\\
											\, a_-h(\delta_3) &\quad \text{if $i=3$}\,.
										  \end{cases}
$$
Now, by Proposition~\ref{prop:RP} it suffices to notice that
$$
\eps_3-\eps_1=\frac{1}{2}\log \left(\frac{\widetilde{p}_+}{\widetilde{p}_-}\right)\,, \qquad  a_-\theta_1(\eps_1)+a_+\theta_3(\eps_3)=\frac{\widetilde{u}_+-\widetilde{u}_-}{2}\,.
$$
Hence, \eqref{eq:rels} holds.


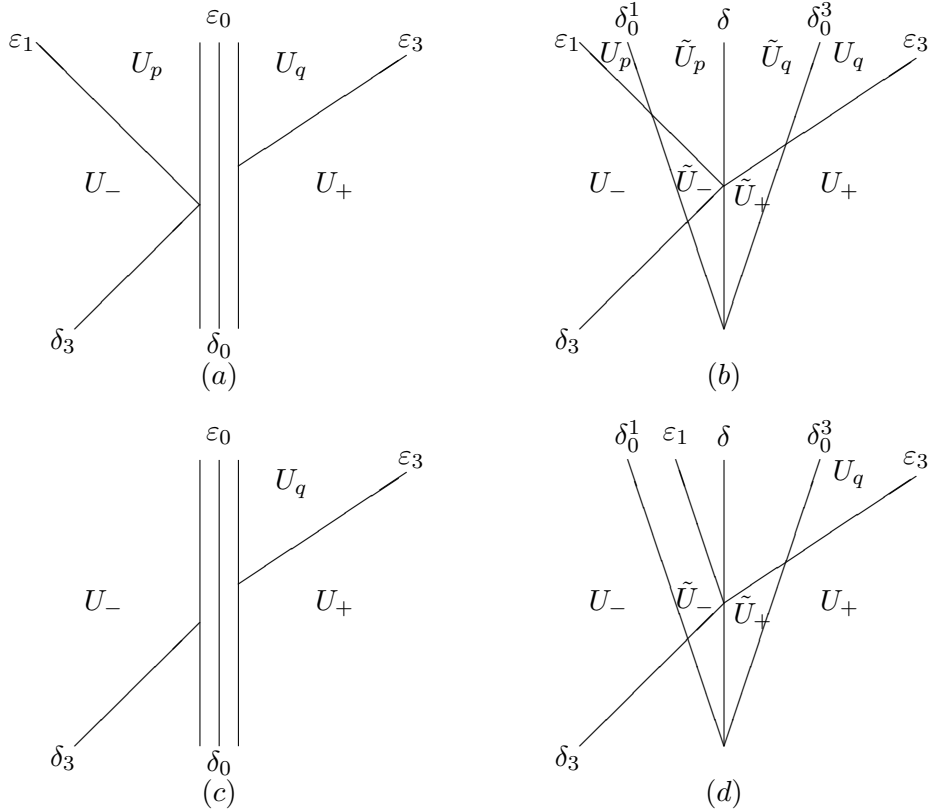
\begin{figure}[htbp]
\begin{picture}(100,280)(-100,-160)
\setlength{\unitlength}{0.9pt}

\put(40,0){

\put(0,0){\line(0,1){120}}
\put(8,0){\line(0,1){120}}
\put(-8,0){\line(0,1){120}}
\put(0,130){\makebox(0,0){$\eps_0$}}
\put(0,-7){\makebox(0,0){$\delta_0$}}
\put(-60,0){\line(1,1){52}}
\put(-65,-5){\makebox(0,0){$\delta_3$}}
\put(8,68){\line(3,2){70}}
\put(80,120){\makebox(0,0){$\eps_3$}}
\put(-8,52){\line(-1,1){68}}
\put(-82,120){\makebox(0,0){$\eps_1$}}
\put(-40,60){\makebox(0,0)[r]{$U_-$}}
\put(40,60){\makebox(0,0)[l]{$U_+$}}
\put(-30,110){\makebox(0,0){$U_p$}}
\put(30,110){\makebox(0,0){$U_q$}}
\put(0,-20){\makebox(0,0){$(a)$}}
}

\put(250,0){
\put(0,0){\line(0,1){120}}
\put(0,0){\line(-1,3){40}}
\put(0,0){\line(1,3){40}}
\put(0,130){\makebox(0,0){$\delta$}}
\put(-40,130){\makebox(0,0){$\delta_0^1$}}
\put(40,130){\makebox(0,0){$\delta_0^3$}}
\put(-60,0){\line(1,1){60}}
\put(-65,-5){\makebox(0,0){$\delta_3$}}
\put(0,60){\line(3,2){80}}
\put(80,120){\makebox(0,0){$\eps_3$}}
\put(0,60){\line(-1,1){60}}
\put(-65,120){\makebox(0,0){$\eps_1$}}
\put(-40,60){\makebox(0,0)[r]{$U_-$}}
\put(-4,62){\makebox(0,0)[r]{$\tilde{U}_-$}}
\put(4,56){\makebox(0,0)[l]{$\tilde{U}_+$}}
\put(40,60){\makebox(0,0)[l]{$U_+$}}
\put(-45,114){\makebox(0,0){$U_p$}}
\put(-14,114){\makebox(0,0){$\tilde{U}_p$}}
\put(22,114){\makebox(0,0){$\tilde{U}_q$}}
\put(52,114){\makebox(0,0){$U_q$}}
\put(0,-20){\makebox(0,0){$(b)$}}
}

\put(40,-175){
\put(0,0){\line(0,1){120}}
\put(8,0){\line(0,1){120}}
\put(-8,0){\line(0,1){120}}
\put(0,130){\makebox(0,0){$\eps_0$}}
\put(0,-7){\makebox(0,0){$\delta_0$}}
\put(-60,0){\line(1,1){52}}
\put(-65,-5){\makebox(0,0){$\delta_3$}}
\put(8,68){\line(3,2){70}}
\put(80,120){\makebox(0,0){$\eps_3$}}
\put(-40,60){\makebox(0,0)[r]{$U_-$}}
\put(40,60){\makebox(0,0)[l]{$U_+$}}
\put(30,110){\makebox(0,0){$U_q$}}
\put(0,-20){\makebox(0,0){$(c)$}}
}

\put(250,-175){
\put(0,0){\line(0,1){120}}
\put(0,0){\line(-1,3){40}}
\put(0,0){\line(1,3){40}}
\put(0,130){\makebox(0,0){$\delta$}}
\put(-40,130){\makebox(0,0){$\delta_0^1$}}
\put(40,130){\makebox(0,0){$\delta_0^3$}}
\put(-60,0){\line(1,1){60}}
\put(-65,-5){\makebox(0,0){$\delta_3$}}
\put(0,60){\line(3,2){80}}
\put(80,120){\makebox(0,0){$\eps_3$}}
\put(0,60){\line(-1,3){20}}
\put(-20,130){\makebox(0,0){$\eps_1$}}
\put(-40,60){\makebox(0,0)[r]{$U_-$}}
\put(-4,62){\makebox(0,0)[r]{$\tilde{U}_-$}}
\put(4,56){\makebox(0,0)[l]{$\tilde{U}_+$}}
\put(40,60){\makebox(0,0)[l]{$U_+$}}
\put(52,114){\makebox(0,0){$U_q$}}
\put(0,-20){\makebox(0,0){$(d)$}}
}

\end{picture}
\vspace{20pt}
\caption{\label{fig:interactions}{Interaction of a $3$-wave $\delta_3$ with a composite wave $\delta_0$. $(a)$, $(c)$: the actual Riemann solvers, the Accurate case $(a)$ and the Simplified one $(c)$; $(b)$, $(d)$: the auxiliary pictures, the Accurate case $(b)$ and the Simplified one $(d)$.}}
\end{figure}


Finally, we verify the relations on the signs of the outgoing waves $\eps_1,\eps_3$. We prove only case $\delta>0$, since the other one is symmetric by replacing $i=1$ with $i=3$. Notice that for $\RLL$ the results collected in \eqref{eq:sign} have already been proved in \cite{AmCo06-Proceed-Lyon} and we obtain the same interaction patterns of \cite[(5.5)]{ABCD}. \\
When the Simplified solver is used and $i=3$, by \eqref{eq:rels} $\eps_1,\eps_3$ solve
\begin{equation}\label{eq:sys1}
\begin{cases}
\eps_3-\eps_1=\delta_3\,,\\
a_-\eps_1+a_+h(\eps_3)=a_-h(\delta_3)\,.
\end{cases}
\end{equation}
Substituting the expression for $\eps_1$ coming from the first equation of \eqref{eq:sys1} into the second one, we obtain $\eps_3+k h(\eps_3)=\delta_3+h(\delta_3)$, where $k=a_+/a_->1$. Hence, we have that $\sgn\, \eps_3=\sgn\, \delta_3$. Now, take $\delta_3<0$ and assume to use $\RLL$ to solve the Riemann problem at the point of interaction. The corresponding outgoing waves $\eps_1^*,\eps_3^*$ solve
\begin{equation}\label{eq:sys1bis}
\begin{cases}
\eps^*_3-\eps^*_1=\delta_3\,,\\
a_-h(\eps^*_1)+a_+h(\eps_3^*)=a_-h(\delta_3)\,.
\end{cases}
\end{equation}
Since $\eps_1^*>0$, then system \eqref{eq:sys1bis} reduces to \eqref{eq:sys1} and, by uniqueness, its solution coincides precisely with $\eps_1,\eps_3$. Hence, \eqref{eq:sign} is valid. If $\delta_3>0$, instead, we have that $h(\delta_3)=\delta_3$ and $h(\eps_3)=\eps_3$, i.e.\ in this case it holds $\RIL=\RII$. This amounts to solve a linear system in $\eps_1,\eps_3$ and we find $\eps_1=-\delta_3\delta/2$. Hence, $\sgn\,\eps_1=-\sgn\,\delta_3=-\sgn\,\delta\cdot\sgn\,\delta_3$, as wished.\\
When $i=1$, by \eqref{eq:rels} $\eps_1,\eps_3$ solve
\begin{equation}\label{eq:sys2}
\begin{cases}
\eps_3-\eps_1=-\delta_1\,,\\
a_-h(\eps_1)+a_+\eps_3=a_+h(\delta_1)\,.
\end{cases}
\end{equation}
Again, it is easy to prove that $\sgn\,\eps_1=\sgn \,\delta_1$. If $\delta_1>0$, then $\RLI=\RII$ and system \eqref{eq:sys2} is linear. Thus, we get $\eps_3=\delta_1\delta/2$ and $\sgn\,\eps_3=\sgn\,\delta_1=\sgn\,\delta\cdot \sgn\,\delta_1$, as wished. If, instead, $\delta_1<0$, then the second formula in \eqref{eq:sys2} becomes
\begin{equation}
    \label{eq:delta1eq}
    \sinh\eps_1+k\eps_3=k\sinh\delta_1\,,
\end{equation}
where $k=a_+/a_->1$. By substituting the expression for $\eps_1$ obtained from the first equation of \eqref{eq:sys2} in \eqref{eq:delta1eq}, we get $k(\eps_3+\delta_1)+\sinh(\eps_3+\delta_1)=k\left(\sinh\delta_1+\delta_1\right)$. If we call $\Gamma(x):=kx+\sinh x$, then $\Gamma(\eps_3+\delta_1)=k\left(\sinh\delta_1+\delta_1\right)$ and
\begin{equation}
    \label{eq:delta1eq2}
    \Gamma(\eps_3+\delta_1)-\Gamma(\delta_1)=(k-1)\sinh\delta_1\,.
\end{equation}
Since $\Gamma$ is a strictly increasing function and $\delta_1<0$, it  follows $\eps_3<0$, that is $\sgn \,\eps_3=\sgn \, \delta_1=\sgn\,\delta\cdot\sgn\,\delta_1$. Therefore, the proposition is completely proved.
\end{proof}

\begin{remark}\label{rem:simpl}
Differently from \cite{ABCD}, in the Simplified Riemann solver the emerging error is not only on the $u$-component of the $0$-wave and the transmitted $i$-wave $\eps_i$ does not maintain the same strength $\delta_i$ of the incoming one. The latter is a key feature of the solver, that guarantees the decrease of the functional defined in \eqref{F} across any interaction. Indeed, here we take into account the possible appearance of a reflected $j$-wave $\eps_j$ ($j \neq i$) that we attach to $\delta_0$ in place of a standard non-physical wave as in \cite{amadori-corli-siam}: this is possible because the states connected by $\eps_j$ and by the $\delta_0^j$ component lie on the same $j$-integral curve. See Remark~\ref{rem:why} for more details.
\end{remark}


\section{Approximate solutions}\label{sec:app_sol}
\setcounter{equation}{0}

We use Proposition~\ref{prop:accsimpl} to build up the piecewise-constant approximate solutions to (\ref{eq:system}) that are needed for the wave-front tracking scheme \cite{Bressanbook,amadori-corli-siam}. We first approximate the initial data \eqref{init-data}: for any $\nu\in\naturali$ we take a sequence $(v^\nu_o,u^\nu_o)$ of piecewise constant functions with a finite number of jumps such that, denoting $p^\nu_o=a^2(\lambda_o)/v^\nu_o$,

\begin{enumerate}
\item $\ds \tv_{}\left(\log(p^\nu_o)\right)\le \tv_{} \left(\log(p_o)\right)$, $\ds \tv_{}\left(u^\nu_o\right) \le \tv_{}\left(u_o \right)$;

\item $\lim_{x\to-\infty} (v^\nu_o,u^\nu_o)(x)
  =\lim_{x\to-\infty} (v_o,u_o)(x)$;

\item $\|(v^\nu_o,u^\nu_o) - (v_o,u_o)\|_{\L1}\leq 1 /\nu$.
\end{enumerate}

We introduce two strictly positive parameters: $\sigma=\sigma_\nu$, that controls the size of rarefactions, and a threshold $\rho=\rho_\nu$,
that determines which of the two Riemann solver is to be used and depends on the initial data. Here follows a description of the scheme that improves the algorithm of  \cite{amadori-corli-siam} and adapts it to the current situation.

\begin{enumerate}[{\em (i)}]

\item At time $t=0$ we solve the Riemann problems 
at each point of jump of $(v^\nu_o, u^\nu_o, \lambda_o)(\cdot, 0+)$ as follows: shocks are not modified while rarefactions are approximated by fans of waves, each of them having size
less than $\sigma$. More precisely, a rarefaction of size $\eps$ is approximated
by $N=[\eps/\sigma]+1$ waves whose size is $\eps/N<\sigma$; we set their speeds to
be equal to the characteristic speed of the state at the right.
Then $(v^\nu,u^\nu,\lambda_o)(\cdot,t)$ is defined until some wave fronts interact; by slightly changing the speed of some waves we can assume that only \emph{two} fronts interact at a time.

\item When two wave fronts of the families $1$ or $3$ interact, we solve the Riemann problem at the interaction point. If one of the incoming waves is a rarefaction, after the interaction it is prolonged (if it still exists) as a single discontinuity with speed equal to the characteristic speed of the state at the right.  If a new rarefaction is generated, we employ the Riemann solver described in step {\em (i)} and split the rarefaction into a fan of waves having size less than $\sigma$.

\item When a wave front of family $1$ or $3$ with strength $\delta$ interacts with one of the composite waves at a time $t>0$, we proceed as follows:
    \begin{itemize}
    \item if $|\delta|\ge\rho$, we use the {\em Accurate solver} introduced in
    Proposition~\ref{prop:accsimpl}, partitioning the possibly new rarefaction
    according to \emph{(i)};
    \item if $|\delta|<\rho$, we use the {\em Simplified solver} of Proposition~\ref{prop:accsimpl}.
    \end{itemize}

\end{enumerate}


\section{Interactions}\label{sec:interaction}
\setcounter{equation}{0}

In this section we analyze interactions between waves. We separately study interactions that involve one of the two composite waves and interactions between $3$- and $1$-waves occurring in one of the regions $\mathcal{L},\mathcal{M},\mathcal{R}$. In particular, we focus on the interaction estimates for the former ones and we introduce a new functional $F$, different from that of \cite{amadori-corli-siam}, to estimate the possible increase of the total variation.

Consider $t>0$ at which no interactions occur and $\xi\ge1$ to be determined. Using indices $\ell,m,r$ to refer to waves in the region $\mathcal{L}, \mathcal{M}, \mathcal{R}$, respectively, we define $L= L^{\ell}+L^m+L^r$, where
\begin{equation*}
L^{\ell,m,r}=\sum_{\genfrac{}{}{0pt}{}{i=1,3,\,\delta_i>0}{\delta_i\in\mathcal{L},\mathcal{M},\mathcal{R}}}|\delta_i| + \xi \sum_{\genfrac{}{}{0pt}{}{i=1,3,\,\delta_i<0}{{\delta_i\in\mathcal{L},\mathcal{M},\mathcal{R}}}}|\delta_i|\,.
\end{equation*}
For $K_{\eta,\zeta}^{\ell,m,r}>0$ (see Figure~\ref{fig:droplambda}) we introduce $Q=Q^\ell+Q^m+Q^r$, where
\begin{align*}
Q^\ell& =\left(K_\eta^\ell|\eta|+ K_\zeta^\ell|\zeta|\right)\sum_{\genfrac{}{}{0pt}{}{\delta_3>0}{\delta_3\in\mathcal{L}}}|\delta_3|+\xi K_\eta^\ell\sum_{\genfrac{}{}{0pt}{}{\delta_3<0}{\delta_3\in\mathcal{L}}}|\delta_3\eta|\,,	 \\
Q^{m}&=K_\eta^m\sum_{\genfrac{}{}{0pt}{}{\delta_1>0}{\delta_1\in\mathcal{M}}}|\delta_1\eta| + K_\zeta^m \sum_{\genfrac{}{}{0pt}{}{\delta_3>0}{\delta_3\in\mathcal{M}}}|\delta_3 \zeta|\,,\\
Q^r &= \left(K_\eta^r|\eta|+ K_\zeta^r|\zeta|\right)\sum_{\genfrac{}{}{0pt}{}{\delta_1>0}{\delta_1\in\mathcal{R}}}|\delta_1| +\xi K_\zeta^r\sum_{\genfrac{}{}{0pt}{}{\delta_1<0}{\delta_1\in\mathcal{R}}}|\delta_1\zeta|\,.
\end{align*}
Moreover, we define $F^{\ell,m,r}=L^{\ell,m,r}+Q^{\ell,m,r}$ and
\begin{equation}\label{F}
F =  F^\ell + F^m +F^r + L^0\,,
\end{equation}
where $L^0=|\eta_0|+|\zeta_0|$. Clearly, $F$ can be seen as defined also by $F=L+Q+L^0$. We also write
\begin{equation*}
\bar{L}  =  \bar{L}^\ell+\bar{L}^m+\bar{L}^r= \sum_{\genfrac{}{}{0pt}{}{i=1,3}{\delta_i\in\mathcal{L}}}|\delta_i|+\sum_{\genfrac{}{}{0pt}{}{i=1,3}{\delta_i\in\mathcal{M}}}|\delta_i|+\sum_{\genfrac{}{}{0pt}{}{i=1,3}{\delta_i\in\mathcal{R}}}|\delta_i|= \frac 12 \tv\left(\log p(t,\cdot)\right)-|\eta_0|-|\zeta_0|\,.
\end{equation*}

\bigskip


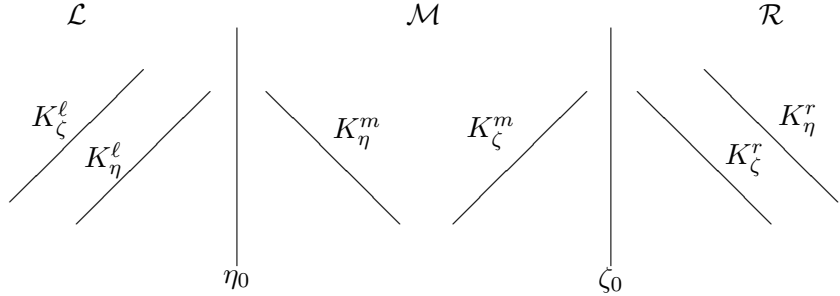
\begin{figure}[htbp]
\begin{picture}(100,110)(-120,10)
\setlength{\unitlength}{1pt}

\put(25,0){
\put(10,30){\line(0,1){90}}
\put(150,30){\line(0,1){90}}
\put(-75, 54){\line(1,1){50}}
\put(-50, 46){\line(1,1){50}}
\put(71, 46){\line(-1,1){50}}
\put(91, 46){\line(1,1){50}}
\put(210, 46){\line(-1,1){50}}
\put(235, 54){\line(-1,1){50}}


\put(-50,125){\makebox(0,0){$\mathcal{L}$}}
\put(80,125){\makebox(0,0){$\mathcal{M}$}}
\put(210,125){\makebox(0,0){$\mathcal{R}$}}

\put(-40,70){\makebox(0,0){$K_\eta^\ell$}}
\put(-60,85){\makebox(0,0){$K_\zeta^\ell$}}
\put(55,80){\makebox(0,0){$K_\eta^m$}}
\put(105,80){\makebox(0,0){$K_\zeta^m$}}
\put(200,70){\makebox(0,0){$K_\zeta^r$}}
\put(220,85){\makebox(0,0){$K_\eta^r$}}
\put(10,25){\makebox(0,0){$\eta_0$}}
\put(150,25){\makebox(0,0){$\zeta_0$}}
}
\end{picture}
\vspace{-4ex}
\caption{\label{fig:droplambda}{The parameters $K_{\eta,\zeta}^{\ell,m,r}$ related to the approached $2$-wave and to the regions of provenience of the approaching waves.}}
\end{figure}


\begin{remark}\label{rem:F_asymm}
The summation in $Q^\ell$ ($Q^r$) is performed over the set of waves {\em approaching} the composite waves from the left (right, respectively) but it does not include $3$-shocks approaching $\zeta_0$ ($1$-shocks approaching $\eta_0$, respectively). Moreover, the sum in $Q^m$ includes neither $3$-shocks approaching $\zeta_0$ nor $1$-shocks approaching $\eta_0$. Indeed, the contributions given by these waves can be dropped from the interaction potential since the linear functional $L$ decreases when they interact with a $0$-wave. The functional $F$ obtained in this way has proven to provide the best possible conditions on the parameters involved and, consequently, the largest ones on the initial data. Clearly, the choice of $F$ is reflected in \eqref{hyp2}, where the total variation of the data can be taken larger in $\mathcal{M}$ than in the outer regions.
\end{remark}

\begin{remark}\label{rem:why}
We will prove that the functional $F$ decreases when we use the Riemann solvers introduced in Proposition~\ref{prop:accsimpl}. This property does not hold true with the solvers of \cite{ABCD,amadori-corli-siam}. As already mentioned in Remark~\ref{rem:simpl}, the key point is that in the Simplified solver the strength of the transmitted wave is not the same of the incoming one, while they coincide for the Pseudo Simplified solver of \cite{ABCD}. Indeed, consider an asymmetrical functional $F_1$ adapted to the situation of \cite{ABCD}, i.e.\ $F_1$ does not include $3$-shocks in the interaction potential. In the case of an interaction of a $3$-shock with the $2$-wave $\delta_2>0$ solved by the Pseudo Simplified solver, we would get $\Delta L=K_{np}|\gamma_{2,0}|>0$ and $\Delta Q=0$. Thus, $F_1$ would increase.
\end{remark}

In the following we often assume that, for some fixed $m_o>0$, any interacting $i$-wave, $i=1,3$, with strength $\delta_i$ satisfies
\begin{equation}\label{rogna}
|\delta_i|\le m_o\,.
\end{equation}
In particular, this bound is to be imposed only to shock waves, since we can control the strength of the rarefaction waves by \eqref{eq:boundrar} below.

\subsection{Interactions with the composite waves}

Here we collect all the estimates concerning the composite waves.

\begin{lemma}[Interaction estimates]\label{lem:intest}
Let $i=1,3$. Consider the interaction of an $i$-wave $\delta_i$ with a composite wave $\delta_0=(\delta_0^1,\delta,\delta_0^3)$.
Denote by $\eps_i$ the strength of the transmitted wave and by $\eps_j$, $j=1,3,\,j\neq i$, the strength of the reflected one
(even in the Simplified case, where it is attached to $\delta_0$). Then, when $|\delta_i|\ge \rho$ it holds
\begin{equation}\label{eq:intest1}
|\eps_i-\delta_i|=|\eps_j|\le
\frac{1}{2}|\delta_i\delta| \qquad \text{and} \qquad |\eps_0-\delta_0|=0\,;
\end{equation}
while, when $|\delta_i|<\rho$ it holds
\begin{equation}\label{eq:intest2}
|\eps_i-\delta_i|=|\eps_0-\delta_0|=|\eps_j|\le\begin{cases}
\ds \frac{C_o}{2}|\delta_i\delta| & \text{if $\delta_i<0$ and either ($i=1$, $\delta>0$) or ($i=3$, $\delta<0$)}\,,\\[7pt]
\ds \frac{1}{2}|\delta_i\delta| & \text{otherwise}\,,
\end{cases}
\end{equation}
where $C_o=C_o(\rho)=\sinh(\rho)/\rho>1$ is such that $C_o(\rho)\to 1^+$ for $\rho\to 0^+$.
\end{lemma}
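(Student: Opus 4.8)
The plan is to read off both estimates directly from the sign relations and from the systems \eqref{eq:sys1}, \eqref{eq:sys2} established in Proposition~\ref{prop:accsimpl}, since by construction $|\eps_i-\delta_i|=|\eps_j|$ (the transmitted wave absorbs exactly the defect carried off by the reflected wave) and $|\eps_0-\delta_0|$ equals either $0$ or $|\eps_j|$ according to whether the Accurate or the Simplified solver is used. Thus the whole lemma reduces to bounding $|\eps_j|$ and identifying, in the Simplified case, when the sharp factor $\tfrac12$ must be replaced by $C_o(\rho)/2$. First I would treat $|\delta_i|\ge\rho$, where the Accurate solver applies: here $\eps_0=\delta_0$ by part~(1) of Proposition~\ref{prop:accsimpl}, giving $|\eps_0-\delta_0|=0$ immediately, and the relations \eqref{eq:rels} yield $\eps_3-\eps_1=\mp\delta_i$ together with the $u$-balance, from which an expansion of $h$ gives $|\eps_j|\le\tfrac12|\delta_i\delta|$.

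For the quantitative bound on $|\eps_j|$ I would work with one representative case, say $i=3$, $\delta>0$ (so $k=a_+/a_->1$ and $\delta=2(k-1)/(k+1)$), and recover the others by the symmetry already exploited in the proof of Proposition~\ref{prop:accsimpl}. From \eqref{eq:sys1}, substituting $\eps_1=\eps_3-\delta_3$ and subtracting the balance at $\delta_3$, one gets an identity of the form $\eps_3+k\,h(\eps_3)-\bigl(\delta_3+k\,h(\delta_3)\bigr)=(k-1)h(\delta_3)$ --- more precisely the reflected strength $\eps_1$ satisfies a relation whose right-hand side is proportional to $(k-1)$, hence to $\delta$. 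When $\delta_3>0$ the system is linear ($h=\Id$ on both waves) and one computes $\eps_1=-\delta_3\delta/2$ exactly, giving the sharp $\tfrac12|\delta_i\delta|$. The delicate case is $\delta_3<0$: then $h(\delta_3)=\sinh\delta_3$ and the mean-value/monotonicity argument on $\Gamma(x)=kx+\sinh x$ (as in \eqref{eq:delta1eq2}) produces $|\eps_1|$ bounded in terms of $|\sinh\delta_3|$ rather than $|\delta_3|$; since $|\delta_3|<\rho$ in the Simplified regime, the ratio $|\sinh\delta_3|/|\delta_3|\le\sinh(\rho)/\rho=C_o(\rho)$ converts this into $|\eps_1|\le (C_o/2)|\delta_3\delta|$, which is exactly the first branch of \eqref{eq:intest2}. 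The pairing of signs in that branch (namely $\delta_i<0$ with the sign of $\delta$ making the transmitted wave a shock along the $h$-curve) is precisely the configuration in which the $\sinh$ nonlinearity, and hence the factor $C_o$, appears; in all complementary sign patterns the relevant wave lies on the linear ($\Id$) part of $h$ and the bound is the sharp $\tfrac12$.

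The identity $|\eps_i-\delta_i|=|\eps_j|=|\eps_0-\delta_0|$ in the Simplified case I would justify by noting that the outgoing composite wave differs from $\delta_0$ only in the single reflected component $\eps_j$ appended to $\delta_0^j$ (cf.\ part~(2) of Proposition~\ref{prop:accsimpl}), so $|\eps_0-\delta_0|=|\eps_j|$, while $\eps_i-\delta_i=-\eps_j$ follows from $\eps_3-\eps_1=\pm\delta_i$ by eliminating the non-interacting index. I expect the main obstacle to be bookkeeping rather than analysis: correctly matching each of the four sign combinations of $(\sgn\delta_i,\sgn\delta)$ against the two branches of \eqref{eq:intest2}, and verifying in every case that the interacting or reflected strength sits on the linear piece of $h$ except in the single $\sinh$-governed configuration. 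Once the representative case $i=3$, $\delta>0$ is settled, the remaining cases follow by the $i=1\leftrightarrow i=3$ symmetry and by exchanging the roles of $\eps_1,\eps_3$, so the estimate \eqref{eq:intest2} holds uniformly with the stated constant $C_o(\rho)=\sinh(\rho)/\rho$, which tends to $1^+$ as $\rho\to0^+$.
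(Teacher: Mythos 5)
Your structural reductions are fine and match the paper: the equality $|\eps_i-\delta_i|=|\eps_j|$ from \eqref{eq:rels}$_1$, the fact that $|\eps_0-\delta_0|$ equals $0$ (Accurate) or $|\eps_j|$ (Simplified), and the linear computation $\eps_1=-\delta_3\delta/2$ giving the sharp $\tfrac12$ when the incoming wave is a rarefaction. But you have misidentified the configuration in which the factor $C_o$ appears, and this is not mere bookkeeping. In your representative case ($i=3$, $\delta>0$) with $\delta_3<0$, the reflected wave is a \emph{rarefaction} by \eqref{eq:sign} (namely $\sgn\,\eps_1=-\sgn\,\delta\cdot\sgn\,\delta_3>0$), so $h(\eps_1)=\eps_1$ and $\RIL$ coincides with $\RLL$: the Simplified solution equals the Accurate one, and the sharp bound $\tfrac12|\delta_3\delta|$ of \eqref{eq:intest1} applies --- this case lies in the ``otherwise'' branch of \eqref{eq:intest2}, exactly as the statement says, since the first branch for $i=3$ requires $\delta<0$. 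Your mean-value argument on $\Gamma$ only yields $C_o/2$ there, weaker than what must be proved. The genuinely delicate case for $\delta>0$ is instead $i=1$, $\delta_1<0$: there \emph{both} outgoing waves are shocks, the transmitted $\eps_1<0$ sits on the $\sinh$ branch of $h$ while the reflected $\eps_3<0$ sits on the integral curve, so the system \eqref{eq:sys2} becomes \eqref{eq:delta1eq} and differs genuinely from the Accurate one; it is here that the paper applies the Mean Value Theorem to $\Gamma$ in \eqref{eq:delta1eq2} and uses $\sinh|\delta_1|\le C_o(\rho)|\delta_1|$ to get $C_o/2$. In words, the $C_o$ branch consists of the shocks transmitted toward the phase with smaller $a$ (shocks hitting $\mathcal{M}$ from outside), equivalently those whose reflected wave is itself a shock --- the opposite of the configuration you singled out.

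The second, related, gap is the symmetry you invoke. Exchanging $i=1\leftrightarrow i=3$ at \emph{fixed} $\delta>0$ is not a symmetry of the problem; the correct symmetry exchanges the families \emph{and} flips the sign of $\delta$ (this is how the paper reduces to $\delta>0$, after which $i=1$ and $i=3$ must be treated as two distinct, non-equivalent cases). Under your reduction, the case ($i=1$, $\delta>0$, $\delta_1<0$) would inherit the sharp $\tfrac12$ bound, but that bound is provably false in general: Lemma~\ref{lem:bestC0} in Appendix~\ref{appB}, estimate \eqref{eq:iesimpl2bis2}$_1$, shows that for $\delta>\sqrt{5}-1$ the ratio $|\eps_0-\delta_0|\big/\bigl(\delta|\delta_1|/2\bigr)$ exceeds $1$ for arbitrarily small $|\delta_1|$. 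So as structured your argument proves a weaker-than-claimed bound in one case and a false one in another; the fix is to redo the case split for $\delta>0$ as the paper does: handle $i=3$, $\delta_3<0$ via $\RIL=\RLL$ and the Accurate estimate, and $i=1$, $\delta_1<0$ via the $\Gamma$--mean-value argument that carries the $C_o$ loss.
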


\begin{proof}
When $|\delta_i|\ge \rho$, i.e. when the Accurate solver is used, \eqref{eq:intest1}$_2$ is immediate and \eqref{eq:intest1}$_1$ can be derived from \eqref{eq:rels} (case $\theta_1=\theta_3=h$) following the same steps as in \cite[Theorem 2]{AmCo06-Proceed-Lyon}.

When $|\delta_i|<\rho$, i.e.\ when the Simplified solver is used, we analyze only the case $\delta>0$ and refer to Figure~\ref{fig:interactions} $(c),(d)$. The equality $|\eps_i-\delta_i|=|\eps_j|$ in \eqref{eq:intest2} is a consequence of \eqref{eq:rels}$_1$, while $|\eps_0-\delta_0|=|\eps_j|$ reflects our choice to attach the reflected wave to the composite one. To prove the inequality in \eqref{eq:intest2}, we distinguish cases according to the characteristic family and the sign of the interacting wave. If $\delta_i>0$, then $\RLI=\RII$ for $i=1$ and $\RIL=\RII$ for $i=3$; moreover, it holds $|\eps_j|=|\delta_i\delta|/2$ by \eqref{eq:rels}. When $i=3$ and the interacting wave has negative sign, as in Proposition~\ref{prop:accsimpl} we have $\RIL=\RLL$ and the interaction estimate \eqref{eq:intest2} follows exactly as in the Accurate case. Instead, when $i=1$ and the interacting wave has negative sign, we have to pay more attention. Recall from Proposition~\ref{prop:accsimpl} that we have $\eps_{1}=\eps_{3}+\delta_{1}$ and
$\eps_{3}<0$, $\delta_{1}<0$; moreover, \eqref{eq:delta1eq2} holds. By the Mean Value Theorem there exists some $s$ such that $\Gamma(\eps_3+\delta_1)-\Gamma(\delta_1)=\Gamma'(s)\eps_3$. Hence, we have
$$
(k+1)|\eps_3|\le \Gamma'(s)|\eps_3|=(k-1)\sinh|\delta_1|
$$
and we deduce
$$
|\eps_3|\le \frac{k-1}{k+1}\sinh|\delta_1|=\frac{\delta}{2}\sinh|\delta_1|\le \frac{C_o}{2}|\delta_1\delta|\,.
$$
\end{proof}


We will discuss in Appendix~\ref{appB} a refinement of estimate \eqref{eq:intest2}. Notice also  that in the previous lemma the biggest effort is required to handle the estimates for shocks interacting with $\delta_0$ and going towards the phase where $a$ is smaller. In our case, these are precisely the shocks that hit $\mathcal{M}$ from the outside, i.e.\ $1$-shocks interacting with $\zeta_0$ and $3$-shocks with $\eta_0$.

Now, we are ready to give a first list of conditions to impose on the parameters $\xi,K_{\eta,\zeta}^{\ell,m,r}$ and $\rho$ in order that the functional $F$ decreases at any interaction time.

\begin{proposition}
    Assume that at a time $t>0$ a wave $\delta_i$, $i=1,3$, interacts with one of
    the composite waves $\eta_0$ or $\zeta_0$. Then, $\Delta F(t)\le 0$ provided
    that
    \begin{gather}\label{eq:composite}
	\xi\ge 1\,, \qquad
	K_\zeta^r,K_\eta^\ell\geq1\,, \qquad
	\frac{\xi-1}{2}\le K_\zeta^m\le\frac{\xi-1}{|\zeta|}\,, \qquad
	\frac{\xi-1}{2}\le K_\eta^m\le\frac{\xi-1}{|\eta|}\,,\\
	\ds K_\eta^m\bigl(1+\frac{|\zeta|}{2}\bigr)|\eta|\le
	K_\eta^r|\eta|+(K_\zeta^r-1)|\zeta|\,,\qquad
	\ds K_\zeta^m\bigl(1+\frac{|\eta|}{2}\bigr)|\zeta|\le
	K_\zeta^\ell|\zeta|+(K_\eta^\ell-1)|\eta|\,,\label{eq:composite2}\\
	 C_{o}(\rho)\le\frac{2\xi}{\xi+1}\min\{K_\zeta^r,K_\eta^\ell\}\,.\label{eq:composite3}
    \end{gather}
\end{proposition}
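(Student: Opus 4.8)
The plan is to expand $\Delta F = \Delta L + \Delta Q + \Delta L^0$ across the interaction and to show that each contribution is paid for by the interaction potential $Q$ that is released when the incoming wave is absorbed by the interface. Since the whole configuration is invariant under the reflection $x\mapsto -x$, which swaps $\eta\leftrightarrow\zeta$, the families $1\leftrightarrow 3$, the regions $\mathcal{L}\leftrightarrow\mathcal{R}$, interchanges the two inequalities in \eqref{eq:composite2}, and preserves the two coefficients in \eqref{eq:composite3}, it suffices to treat interactions at $\zeta_0$ (where $\delta=\zeta>0$) and recover those at $\eta_0$ by symmetry. Only two kinds of waves reach $\zeta_0$: a $3$-wave coming from $\mathcal{M}$ and a $1$-wave coming from $\mathcal{R}$. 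I would organize the analysis by the family of the incoming wave, the sign of $\delta_i$ (rarefaction or shock), and the solver used, invoking the sign rules \eqref{eq:sign} and the size estimates \eqref{eq:intest1}, \eqref{eq:intest2}.

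Each case is short once two structural facts are exploited. First, $Q^m$ counts only the rarefactions in $\mathcal{M}$ approaching an interface and $Q^r$ only the $1$-waves in $\mathcal{R}$ approaching $\zeta_0$; in particular, by the design recalled in Remark~\ref{rem:F_asymm}, shocks sitting in $\mathcal{M}$ never enter $Q$, so a reflected shock left in $\mathcal{M}$ affects $\Delta Q$ only through the removal of the incoming potential. Second, the relation $\eps_3-\eps_1=\pm\delta_i$ from \eqref{eq:rels}, combined with the signs from \eqref{eq:sign}, turns every $\Delta L$ into a clean multiple of the reflected strength $|\eps_j|$ (for instance $\Delta L=2|\eps_j|$ when both outgoing waves are rarefactions, and $\Delta L=(\xi-1)|\eps_j|$ or $(1-\xi)|\eps_j|$ when one is a shock), while in the Simplified solver the absorbed reflected wave gives $\Delta L^0=|\eps_j|$. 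Because $F$ is additively linear in the individual wave strengths (weighted by the fixed interface sizes $|\eta|,|\zeta|$), splitting an outgoing rarefaction into the fan of step \emph{(i)} leaves $F$ unchanged, so one may argue with the single net outgoing wave. Substituting the bound on $|\eps_j|$ from Lemma~\ref{lem:intest} then reduces each case to a scalar inequality in $\xi,K_{\eta,\zeta}^{\ell,m,r},\rho$.

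I expect the conditions to match the cases as follows. A $3$-rarefaction from $\mathcal{M}$ (which reflects a $1$-shock) gives $\Delta F\le |\delta_3||\zeta|\,[(\xi-1)/2-K_\zeta^m]$, yielding the lower bound on $K_\zeta^m$; a $3$-shock from $\mathcal{M}$ (whose reflected $1$-rarefaction lands in $Q^m$) gives the upper bound $K_\eta^m\le(\xi-1)/|\eta|$, and the symmetric interactions at $\eta_0$ complete the two-sided bounds in \eqref{eq:composite}. A $1$-rarefaction arriving from $\mathcal{R}$ is transmitted into $\mathcal{M}$, where it begins approaching $\eta_0$ and hence moves its potential from $Q^r$ to $Q^m$; using $|\eps_1|\le(1+|\zeta|/2)|\delta_1|$ one recovers precisely the transmission inequality \eqref{eq:composite2}. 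A $1$-shock from $\mathcal{R}$ solved by the Accurate solver gives $\Delta F\le \xi|\delta_1||\zeta|\,(1-K_\zeta^r)$, whence $K_\zeta^r\ge1$; the constant $\xi\ge1$ is used repeatedly to clear the remaining shock cases (in which the reflected wave is a rarefaction absorbed by the interface).

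The main obstacle is the single case of a \emph{shock entering the more-liquid region $\mathcal{M}$} through the Simplified solver, that is a $1$-shock from $\mathcal{R}$ hitting $\zeta_0$ (and, symmetrically, a $3$-shock from $\mathcal{L}$ hitting $\eta_0$): this is exactly the sign combination for which \eqref{eq:intest2} carries the amplification factor $C_o(\rho)>1$. Here the transmitted shock enters $L$ with weight $\xi$ and the reflected wave is absorbed into $L^0$ with weight $1$, so that $\Delta F\le |\delta_1||\zeta|\,[(\xi+1)C_o/2-\xi K_\zeta^r]$. Requiring nonpositivity gives $C_o(\rho)\le 2\xi K_\zeta^r/(\xi+1)$, and taking the worse of the two symmetric interfaces yields exactly \eqref{eq:composite3}. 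Since $C_o(\rho)\to1^+$ as $\rho\to0^+$, this constraint is always satisfiable by choosing $\rho$ small enough, which closes the delicate step; assembling the finitely many cases then proves $\Delta F(t)\le0$.
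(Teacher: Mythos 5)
Your proposal is correct and follows essentially the same route as the paper's proof: the same symmetry reduction to interactions at $\zeta_0$, the same case analysis by family, sign and solver, and the same matching of conditions to cases (the lower bound on $K_\zeta^m$ from $3$-rarefactions, the upper bound on $K_\eta^m$ from $3$-shocks, \eqref{eq:composite2} from transmitted $1$-rarefactions, $K_\zeta^r\ge1$ from Accurate $1$-shocks, and \eqref{eq:composite3} from the Simplified $1$-shock case with the $C_o(\rho)$ amplification). Your intermediate estimates, including $\Delta F\le\bigl[(1+\xi)C_o/2-\xi K_\zeta^r\bigr]|\delta_1\zeta|$ in the delicate case, coincide with the paper's computations.
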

\begin{proof}


\begin{figure}[htbp]
\begin{picture}(100,100)(-80,-15)
\setlength{\unitlength}{1pt}

\put(50,0){
\put(0,0){\line(0,1){40}} \put(-2,-5){\makebox(0,0){$\zeta_0$}}
\put(-60,0){\line(0,1){40}} \put(-62,-5){\makebox(0,0){$\eta_0$}}
\put(20,0){\line(-1,2){20}} \put(20,-5){\makebox(0,0){$\delta_1$}}
\put(0,40){\line(0,1){30}} \put(-2,75){\makebox(0,0){$\zeta_0$}}
\put(-60,40){\line(0,1){30}} 
\put(0,40){\line(1,2){15}} \put(20,75){\makebox(0,0){$\eps_3$}}
\put(0,40){\line(-1,1){30}} \put(-30,75){\makebox(0,0){$\eps_1$}}
\put(-37,40){\makebox(0,0){$\mathcal{M}$}}
\put(40,40){\makebox(0,0){$\mathcal{R}$}}
\put(-90,40){\makebox(0,0){$\mathcal{L}$}}
\put(-37,-22){\makebox(0,0){$(a)$}}
}

\put(270,0){
\put(0,0){\line(0,1){40}} \put(-2,-5){\makebox(0,0){$\zeta_0$}}
\put(-20,0){\line(1,2){20}}\put(-20,-5){\makebox(0,0){$\delta_3$}}
\put(-60,0){\line(0,1){40}} \put(-62,-5){\makebox(0,0){$\eta_0$}}
\put(-60,40){\line(0,1){30}} 
\put(0,40){\line(0,1){30}} \put(-2,75){\makebox(0,0){$\zeta_{0}$}}
\put(0,40){\line(1,1){30}} \put(30,75){\makebox(0,0){$\eps_3$}}
\put(0,40){\line(-1,2){15}} \put(-20,75){\makebox(0,0){$\eps_1$}}
\put(-37,40){\makebox(0,0){$\mathcal{M}$}}
\put(40,40){\makebox(0,0){$\mathcal{R}$}}
\put(-90,40){\makebox(0,0){$\mathcal{L}$}}
\put(-37,-22){\makebox(0,0){$(b)$}}
}

\end{picture}
\vspace{10pt}
\caption{\label{fig:inter2}{Interactions of $1$- and $3$-waves with $\zeta_0$ solved by means of the Accurate solver. Here the fronts carrying the composite waves are represented as a single line.}}
\end{figure}
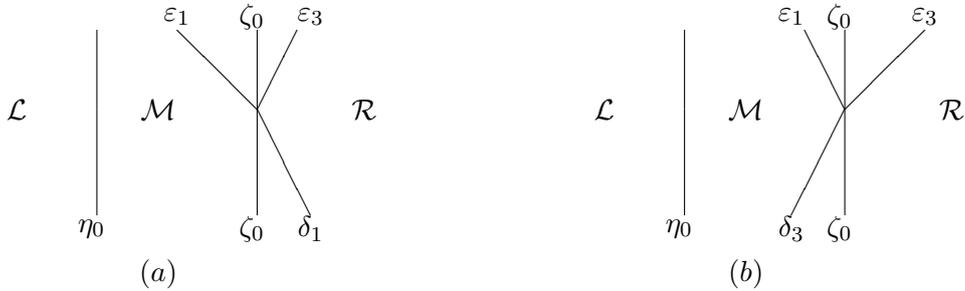


Since the two cases give symmetric conditions, we only analyze interactions involving $\zeta_0$; see Figure~\ref{fig:inter2}. 
We have
\begin{equation*}
\left\{
\begin{array}{lll}
\eps_3-\eps_1 = -\delta_1, &\quad |\eps_1|-|\delta_1| = |\eps_3|\,,  &\qquad \mbox{if  }i=1\,, \\[1mm]
\eps_3-\eps_1 = \delta_3, &\quad |\eps_3|-|\delta_3|  =  -|\eps_1|\,, &\qquad \mbox{if  }i=3\,.
\end{array}
\right.
\end{equation*}

\smallskip\noindent{\fbox{$i=1$.}} If the interacting wave is a rarefaction,
then by \eqref{eq:intest1},\eqref{eq:intest2}
we have
$\Delta L+\Delta L^0=|\eps_3|+|\eps_1|-|\delta_1|=2|\eps_3|\le |\delta_1\zeta|$
and $\Delta Q = K_\eta^m|\eps_1\eta|- K_\eta^r|\delta_1\eta|
-K_\zeta^r|\delta_1\zeta|$.
Therefore,
$$
\Delta F
\le |\delta_{1}|\left[\Big(K_\eta^m\bigl(1+\frac{|\zeta|}{2}\bigr)
-K_\eta^r\Big)|\eta|+(1-K_\zeta^r)|\zeta|\right]\,,
$$
which is nonpositive by \eqref{eq:composite2}$_{1}$.
Instead, if the interacting wave is a shock, then by \eqref{eq:intest1},\eqref{eq:intest2}
$$
\Delta L+\Delta L^0= \begin{cases}
\ds\xi|\eps_1|+\xi|\eps_3|-\xi|\delta_1|=2\xi|\eps_3| \le \xi|\delta_1\zeta| &\quad \text{if $|\delta_1|\ge \rho$}\,,\\[7pt]
\ds\xi|\eps_1|+|\eps_3|-\xi|\delta_1|=(1 +\xi)|\eps_3|
\le C_{o}(1 +\xi)\frac{|\delta_1\zeta|}{2}&\quad \text{if $|\delta_1|< \rho$}\,,
\end{cases}
$$
and $\Delta Q =  - K_\zeta^r\xi|\delta_1\zeta|$ in both cases. Consequently,
\begin{equation*}
\Delta F
\le \begin{cases}
\ds \xi \left[1-K_\zeta^r\right]|\delta_1\zeta|  &\quad \text{if $|\delta_1|\ge \rho$}\,,\\[7pt]
\ds \left[(1+\xi)\frac{C_{o}}{2}-\xi K_\zeta^r\right]|\delta_1\zeta| &\quad \text{if $|\delta_1|<\rho$}\,,
\end{cases}
\end{equation*}
is nonpositive by \eqref{eq:composite}$_{1,2}$ and \eqref{eq:composite3}.

\smallskip\noindent{\fbox{$i=3$.}} If the interacting wave is a rarefaction,
then by the interaction estimates 
$$
\Delta L+\Delta L^0= \begin{cases}
\ds\xi|\eps_1| + |\eps_3| - |\delta_3| = (\xi -1 )|\eps_1| \le (\xi-1)\frac{|\delta_3\zeta|}{2} &\quad \text{if $|\delta_3|\ge \rho$}\,,\\[7pt]
\ds |\eps_1|+|\eps_3|-|\delta_3|=0 &\quad \text{if $|\delta_3|< \rho$}\,,
\end{cases}
$$
and $\Delta Q=-K_\zeta^m|\delta_3\zeta|$. Then,
\begin{equation*}
\Delta F \begin{cases}
\ds\le \left[\frac{\xi -1}{2}- K_\zeta^m\right]|\delta_3\zeta| &\quad \text{if $|\delta_3|\ge \rho$}\,,\\[7pt]
\ds = -K_\zeta^m|\delta_3\zeta| &\quad \text{if $|\delta_3|<\rho$}\,,
\end{cases}
\end{equation*}
is nonpositive if \eqref{eq:composite}$_{3}$ holds.
On the other hand, if the interacting wave is a shock,
then $\Delta L+\Delta L^0= |\eps_1|+\xi|\eps_3|-\xi|\delta_3|=-(\xi-1)|\eps_1|$
and
$$
\Delta Q=\begin{cases}
\ds K_\eta^m|\eps_1\eta| &\quad \text{if $|\delta_3|\ge \rho$}\,,\\[7pt]
0 &\quad \text{if $|\delta_3|< \rho$}\,.
\end{cases}
$$
Therefore,
\begin{equation*}
\Delta F = \begin{cases}
\ds \left[-(\xi-1)+K_\eta^m|\eta|\right]|\eps_1| &\quad \text{if $|\delta_3|\ge \rho$}\,,\\[7pt]
\ds -(\xi-1)|\eps_1| &\quad \text{if $|\delta_3|<\rho$}\,,
\end{cases}
\end{equation*}
is nonpositive by \eqref{eq:composite}$_{4}$.
\end{proof}


\subsection{Interactions between waves of the same family}

In this subsection we analyze the interactions between $1$- and $3$-waves, see Figure~\ref{fig:inter3133}.


\begin{figure}[htbp]
\begin{picture}(100,80)(-130,-15)
\setlength{\unitlength}{0.8pt}

\put(-20,0){
\put(0,40){\line(-2,-3){30}}\put(-40,0){\makebox(0,0){$\delta_3$}}
\put(20,0){\line(-1,2){20}} \put(30,0){\makebox(0,0){$\delta_1$}}
\put(0,40){\line(1,2){15}} \put(20,75){\makebox(0,0){$\eps_3$}}
\put(0,40){\line(-1,1){30}} \put(-30,75){\makebox(0,0){$\eps_1$}}
\put(0,-20){\makebox(0,0){$(i)$}}
}

\put(140,0){
\put(0,40){\line(1,-1){45}}
\put(15,0){\makebox(0,0){$\alpha_1$}}
\put(0,40){\line(2,-3){30}}
\put(55,0){\makebox(0,0){$\beta_1$}}
\put(0,40){\line(1,2){15}} \put(20,75){\makebox(0,0){$\eps_3$}}
\put(0,40){\line(-1,1){30}} \put(-30,75){\makebox(0,0){$\eps_1$}}
}
%
\put(300,0){ \put(0,40){\line(-1,-1){45}}
\put(-55,0){\makebox(0,0){$\alpha_3$}}
\put(0,40){\line(-2,-3){30}}\put(-10,0){\makebox(0,0){$\beta_3$}}
\put(0,40){\line(1,2){15}} \put(20,75){\makebox(0,0){$\eps_3$}}
\put(0,40){\line(-1,1){30}} \put(-30,75){\makebox(0,0){$\eps_1$}}
\put(-80,-20){\makebox(0,0){$(ii)$}}
}

\end{picture}

\caption{\label{fig:inter3133}{Interactions of $1$- and $3$-waves.}}
\end{figure}
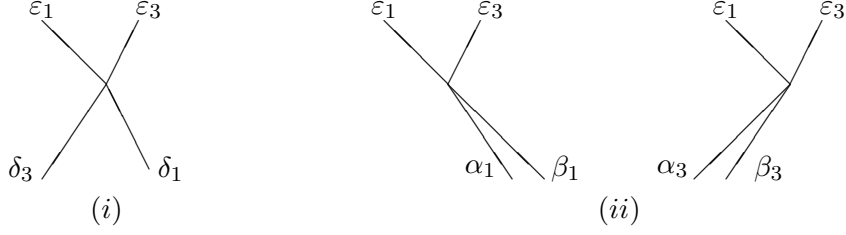


For interactions of two $i$-waves, $i=1,3$, under the notation of Figure~\ref{fig:inter3133} we shall make use of the identities \cite{AmCo06-Proceed-Lyon}:
\begin{equation}\label{eq:impo3}
\eps_3-\eps_1=\begin{cases}
-\alpha_1 -\beta_1 &\quad \text{if $i=1$}\,,\\
\alpha_3 +\beta_3 &\quad \text{if $i=3$}\,,
\end{cases}
\qquad h(\eps_1)+h(\eps_3)=h(\alpha_i)+h(\beta_i)\,,\quad i=1,3\,.
\end{equation}

\begin{lemma}
\label{lem:shock-riflesso}
For the interaction patterns in Figure~\ref{fig:inter3133}, the following holds.

\begin{enumerate}[{(i)}]
\item Two interacting waves of different families
cross each other without changing strengths.

\item Let $\alpha_i$, $\beta_i$ be two interacting waves of the same family and $\eps_1$, $\eps_3$ the outgoing waves.

\begin{enumerate}[({ii}.a)]

\item  If both incoming waves are shocks, then the outgoing wave of the same family is a shock and satisfies $|\eps_i|>\max \{|\alpha_i|,|\beta_i|\}$; the reflected wave is a rarefaction.

\item  If the incoming waves have different signs, then the reflected wave is a shock; both the amounts of shocks and rarefactions of the $i$-th family decrease across the interaction.  Moreover for $j\ne i $ and $\alpha_i<0<\beta_i$ one has
    \begin{align}\label{eq:chi_def}
    |\eps_j| & \le c(\alpha_i) \cdot \min\{|\alpha_i|,|\beta_i|\}\,,\qquad c(z) = \frac{\cosh z -1}{\cosh z+1}\,.
    \end{align}
\end{enumerate}
\end{enumerate}
\end{lemma}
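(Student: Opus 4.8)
The plan is to reduce both parts to the two scalar compatibility relations \eqref{eq:impo3} together with the monotonicity and convexity of the function $h$ from \eqref{h}, recalling that rarefactions carry positive and shocks negative strengths. For part (i), the incoming pair (a $3$-wave followed by a $1$-wave) and the outgoing pair (a $1$-wave followed by a $3$-wave) connect the same outer states $U_-,U_+$, which lie in a single phase ($\lambda_-=\lambda_+$, so $\delta=0$). Writing the effect of each wave through \eqref{eq:lax13}--\eqref{eq:strengths}, namely multiplication of $v$ and addition of $2a\,h(\cdot)$ to $u$, one checks directly that the incoming strengths $(\delta_1,\delta_3)$ already satisfy the two relations of Proposition~\ref{prop:RP} with $\theta_1=\theta_3=h$; by the uniqueness asserted there the outgoing strengths must equal them, giving $\eps_1=\delta_1$, $\eps_3=\delta_3$.

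For part (ii) I fix $i=3$ (the case $i=1$ being symmetric) and use $\eps_3-\eps_1=\alpha_3+\beta_3$ with $h(\eps_1)+h(\eps_3)=h(\alpha_3)+h(\beta_3)$, substituting $\eps_3=\eps_1+\alpha_3+\beta_3$ so that $\eps_1\mapsto h(\eps_1)+h(\eps_1+\alpha_3+\beta_3)$ is strictly increasing. For (ii.a), with $\alpha_3,\beta_3<0$, the key elementary fact is the strict superadditivity $\sinh\alpha_3+\sinh\beta_3>\sinh(\alpha_3+\beta_3)$, which follows from $\sinh(\alpha_3+\beta_3)=\sinh\alpha_3\cosh\beta_3+\cosh\alpha_3\sinh\beta_3$ and $\cosh\ge1$. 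Evaluating the increasing map at $\eps_1=0$ then lies strictly below the target $h(\alpha_3)+h(\beta_3)$, forcing $\eps_1>0$, i.e.\ a reflected rarefaction; feeding this back gives $h(\eps_3)<\sinh\alpha_3+\sinh\beta_3<0$, hence $\eps_3<0$, and since $\sinh\eps_3=\sinh\alpha_3+\sinh\beta_3-\eps_1<\sinh(\min\{\alpha_3,\beta_3\})$ one concludes $|\eps_3|>\max\{|\alpha_3|,|\beta_3|\}$.

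For (ii.b), with $\alpha_3<0<\beta_3$ one has $h(\beta_3)=\beta_3$, and the value of the increasing map at $\eps_1=0$ now lies strictly \emph{above} the target: indeed $\sinh(\alpha_3+\beta_3)>\sinh\alpha_3+\beta_3$ when $\alpha_3+\beta_3<0$ (since $\beta\mapsto\sinh(\alpha_3+\beta)-\sinh\alpha_3-\beta$ vanishes at $0$ and has derivative $\cosh(\alpha_3+\beta)-1\ge0$), while $\alpha_3+\beta_3>\sinh\alpha_3+\beta_3$ when $\alpha_3+\beta_3\ge0$ (since $\sinh\alpha_3<\alpha_3$). Hence $\eps_1<0$, so the reflected wave $\eps_j=\eps_1$ is a shock. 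The monotone decrease of both the shock and the rarefaction amounts of the $i$-th family follows from $\eps_1<0$: if $\eps_3>0$ then $\eps_3=\eps_1+\alpha_3+\beta_3<\beta_3$, while if $\eps_3<0$ then $\sinh\eps_3=\sinh\alpha_3+\beta_3-\sinh\eps_1>\sinh\alpha_3$ gives $|\eps_3|<|\alpha_3|$; in either case the ``other'' family amount drops from a positive value to $0$.

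The main obstacle is the sharp bound \eqref{eq:chi_def} on $\rho:=|\eps_1|$, which I would prove by splitting on the sign of the transmitted wave $\eps_3$ (equivalently, by whether the rarefaction or the shock dominates). When $\eps_3\ge0$ the two relations collapse to the single equation $\sinh\rho+\rho=\sinh|\alpha_3|-|\alpha_3|$, independent of $\beta_3$; here $\rho<|\alpha_3|$, and applying the Mean Value Theorem to $\sinh|\alpha_3|-\sinh\rho=|\alpha_3|+\rho$ yields $\xi\in(\rho,|\alpha_3|)$ with $\cosh\xi=(|\alpha_3|+\rho)/(|\alpha_3|-\rho)$, so that $\cosh\xi<\cosh\alpha_3$ rearranges exactly into $\rho<c(\alpha_3)|\alpha_3|$ (using that $c$ is even). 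When $\eps_3<0$ I would regard $r=\eps_1$ as a function of $\beta_3$ with $r(0)=0$ and differentiate the implicit relation to obtain $r'=(1-\cosh\eps_3)/(\cosh r+\cosh\eps_3)$; since $|r'|<1$ forces $\rho\le\beta_3$ and hence $|\eps_3|\le|\alpha_3|$, the monotonicity of $z\mapsto(z-1)/(z+1)$ together with $\cosh r\ge1$ bounds $|r'|$ by $c(\alpha_3)$, and integrating from $\beta_3=0$ gives $\rho\le c(\alpha_3)\beta_3$. Because $\rho(\beta_3)$ is nondecreasing and saturates at the transition $\eps_3=0$ to the value solving the displayed scalar equation, one has $\rho\le c(\alpha_3)|\alpha_3|$ in all cases as well, and the two estimates combine into $\rho\le c(\alpha_3)\min\{|\alpha_3|,\beta_3\}$. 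The delicate points are verifying $|\eps_3|\le|\alpha_3|$ in the shock regime and checking continuity at $\eps_3=0$ so that the two regimes glue into the single bound.
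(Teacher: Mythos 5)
Your proof is correct, but there is nothing in this paper to compare it against: the authors do not prove Lemma~\ref{lem:shock-riflesso} in the text, they defer it entirely to the preprint \cite{ABCD} (remarking only that the function $c$ replaces the damping coefficient $d$ of \cite{amadori-corli-siam}). Your argument therefore stands as a self-contained substitute, and it stays inside the toolkit this paper uses for its own composite-wave estimates: the identities \eqref{eq:impo3}, strict monotonicity of $h$, and Mean Value Theorem manipulations of hyperbolic functions (compare the treatment of $\Gamma(x)=kx+\sinh x$ in Proposition~\ref{prop:accsimpl}, Lemma~\ref{lem:intest} and Appendix~\ref{appB}). I checked the delicate points you flagged and they do close. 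In part (i), the incoming strengths satisfy \eqref{impo} because $v$ transforms multiplicatively and $u$ additively along both families, so the order of composition is immaterial and uniqueness in Proposition~\ref{prop:RP} (with $a_-=a_+$) applies. In the shock regime of \eqref{eq:chi_def} your integration argument is not circular: the unconditional bound $|r'|<1$ first gives $\rho(s)\le s$, hence $|\eps_3(s)|\le|\alpha_3|$ along the whole trajectory, and only then $|r'(s)|\le c(\eps_3(s))\le c(\alpha_3)$. Finally, the gluing of the two regimes is legitimate because $\eps_3'=(1+\cosh r)/(\cosh r+\cosh\eps_3)>0$, so $\eps_3$ crosses zero exactly once, at $\beta^{*}=|\alpha_3|+\rho(\beta^{*})>|\alpha_3|$; this also guarantees $\min\{|\alpha_3|,\beta_3\}=|\alpha_3|$ throughout the rarefaction regime, which is exactly what your final case combination needs. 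One small presentational remark: in \eqref{eq:strengths} the $p$-ratios are misprinted (with $p=a^2/v$ one has $\log(p/p_o)=-\log(v/v_o)$); your part (i) computation implicitly uses the $v$-based convention, which is the one consistent with \eqref{impo}, so you should state that convention explicitly when writing the argument up.
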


The proof of Lemma~\ref{lem:shock-riflesso} can be found in \cite{ABCD}, where the function $c$ is used in place of the \emph{damping coefficient} $d$ of \cite{amadori-corli-siam}. Remark also that, by definition of the functionals, we need to distinguish between interactions taking place in $\mathcal{M}$ and in $\mathcal{L}$ or $\mathcal{R}$.

\begin{proposition}
    Consider the interaction at time $t>0$ of two waves of the same family $1$
    or $3$ and assume \eqref{rogna}. Then, $\Delta F(t)\le 0$ provided that
    \begin{gather}\label{cond13}
	1\le \xi \le \frac{1}{c(m_o)}\,, \qquad
	K_\zeta^m\le \frac{\xi-1}{|\zeta|}\,,\qquad
	K_\eta^m\le \frac{\xi-1}{|\eta|}\,,\\
	K_\eta^r|\eta|+K_\zeta^r|\zeta|\le \xi-1\,,\qquad
	K_\eta^\ell|\eta|+K_\zeta^\ell|\zeta|\le \xi-1\,.\label{cond13bis}
    \end{gather}
\end{proposition}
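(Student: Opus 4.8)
The plan is to establish $\Delta F(t)\le 0$ for each of the two interaction patterns in Figure~\ref{fig:inter3133}, case $(ii)$, treating $i=1$ and $i=3$ symmetrically, and within each case splitting further according to the combination of signs of the incoming waves $\alpha_i,\beta_i$ and to the region $\mathcal{L},\mathcal{M},\mathcal{R}$ in which the interaction takes place. Since two waves of different families cross without interaction by Lemma~\ref{lem:shock-riflesso}$(i)$, only same-family interactions contribute. Recall that $L^0=|\eta_0|+|\zeta_0|$ is unaffected by these interactions (no composite wave is involved), so $\Delta L^0=0$ and it suffices to control $\Delta L+\Delta Q$. I would first record, from \eqref{eq:impo3} and Lemma~\ref{lem:shock-riflesso}, the sign and size information for the outgoing waves $\eps_1,\eps_3$: when both incoming waves are shocks the same-family outgoing wave is a strictly larger shock and the reflected wave $\eps_j$ is a rarefaction; when the incoming waves have opposite signs the reflected wave is a shock and both the shock and rarefaction amounts of the interacting family decrease, with the reflection bound \eqref{eq:chi_def} controlling $|\eps_j|$ by $c(\alpha_i)\min\{|\alpha_i|,|\beta_i|\}\le c(m_o)\min\{|\alpha_i|,|\beta_i|\}$ thanks to \eqref{rogna} and monotonicity of $c$.

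The core computation is to bound $\Delta L$. Writing $L$ with weight $1$ on rarefactions and weight $\xi$ on shocks, the two-shocks subcase produces a larger outgoing shock but creates a reflected rarefaction, so the change is governed by the competition between the extra shock strength (weighted $\xi$) and the new rarefaction (weighted $1$); using $|\eps_i|=|\alpha_i|+|\beta_i|-|\eps_j|$ type identities from \eqref{eq:impo3}, the worst term is proportional to $(\xi c(m_o)-1)|\eps_j|$ or similar, which is nonpositive exactly under $\xi\le 1/c(m_o)$ from \eqref{cond13}$_1$. In the opposite-sign subcase the interacting family strictly decreases, so $\Delta L<0$ from that family, but a reflected wave of the other family $j$ is created; this reflected wave increases $L$ by at most $\xi|\eps_j|$ (if it is a shock) and, crucially, it may increase the quadratic potential $Q$ because the newly created $j$-wave is an approaching wave relative to one of the composite waves $\eta_0$ or $\zeta_0$. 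This is where the conditions \eqref{cond13bis} and \eqref{cond13}$_{2,3}$ enter.

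The main obstacle, and the step deserving the most care, is the bookkeeping of $\Delta Q$ in the opposite-sign subcase, separated by region. In $\mathcal{M}$ the newly generated reflected wave is weighted in $Q^m$ by $K_\eta^m|\eta|$ or $K_\zeta^m|\zeta|$, and the decrease in $L$ coming from the $(\xi-1)$-weighted cancellation of the interacting family must dominate this increase; matching $|\eps_j|$ against $\min\{|\alpha_i|,|\beta_i|\}$ via \eqref{eq:chi_def} reduces the requirement to $K_\eta^m|\eta|\le\xi-1$ and $K_\zeta^m|\zeta|\le\xi-1$, i.e. \eqref{cond13}$_{2,3}$. In $\mathcal{L}$ (resp. $\mathcal{R}$) a reflected wave may approach \emph{both} composite waves $\eta_0$ and $\zeta_0$, so its contribution to $Q$ is weighted by $K_\eta^{\ell}|\eta|+K_\zeta^{\ell}|\zeta|$ (resp. $K_\eta^r|\eta|+K_\zeta^r|\zeta|$), and the same domination argument yields precisely \eqref{cond13bis}. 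I would organize the proof as a short table: for each of $i=1,3$, each sign pattern, and each region, exhibit $\Delta L$ and $\Delta Q$ explicitly, then read off the controlling inequality, verifying in every line that it is implied by \eqref{cond13}--\eqref{cond13bis}. The delicate point throughout is ensuring that the reflected rarefactions/shocks are assigned to the correct region and to the correct approaching/non-approaching category consistent with the asymmetric definition of $Q$ recalled in Remark~\ref{rem:F_asymm}, since an incorrect assignment would spuriously change the sign of $\Delta Q$.
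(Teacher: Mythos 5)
Your plan follows the same skeleton as the paper's proof (case analysis by family, sign pattern and region, using \eqref{eq:impo3}, Lemma~\ref{lem:shock-riflesso} and the asymmetric bookkeeping of $Q$), but you have swapped which hypothesis handles which sign pattern, and with that swap the estimates do not close. In the two-shock subcase the reflected wave $\eps_j$ is a \emph{rarefaction}, and the identity $|\eps_i|=|\alpha_i|+|\beta_i|-|\eps_j|$ gives \emph{exactly} $\Delta L=\xi\left(|\eps_i|-|\alpha_i|-|\beta_i|\right)+|\eps_j|=-(\xi-1)|\eps_j|$, valid for every $\xi\ge1$ with no use of $c(m_o)$; this is \eqref{Delta_L_xi_13}. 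Your claimed worst term $(\xi c(m_o)-1)|\eps_j|$ is not what comes out, and the difference is fatal: it is precisely in this two-shock case that the reflected rarefaction enters $Q$ (with weight $K_\eta^m|\eta|$ in $\mathcal{M}$, or $K_\eta^{r}|\eta|+K_\zeta^{r}|\zeta|$ in $\mathcal{R}$), so the $Q$-increase must be absorbed by the $L$-budget. With the correct budget $(\xi-1)|\eps_j|$ one needs exactly $K_\eta^m|\eta|\le\xi-1$, i.e.\ \eqref{cond13}$_3$, resp.\ \eqref{cond13bis}; with your budget $\bigl(1-\xi c(m_o)\bigr)|\eps_j|$ one would need $K_\eta^m|\eta|\le 1-\xi c(m_o)$, which is \emph{not} implied by the stated hypotheses (take $\xi$ near $1/c(m_o)$: your budget vanishes while $K_\eta^m|\eta|$ may be as large as $\xi-1>0$). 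So executing your sketch literally would prove a different, incompatible proposition.

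Symmetrically, the upper bound $\xi\le 1/c(m_o)$ and the reflection estimate \eqref{eq:chi_def} are consumed in the \emph{opposite-sign} subcase, not where you placed them: there the reflected wave is a \emph{shock}, so it costs $\xi|\eps_j|$ in $L$, and one needs the damping bound $|\eps_j|\le c(m_o)\min\{|\alpha_i|,|\beta_i|\}$ to obtain $\Delta L+\xi(\xi-1)|\eps_j|\le0$ (the paper's \eqref{Delta_L_xi_13_SR}). Moreover, your statement that in $\mathcal{M}$ this reflected wave "is weighted in $Q^m$" is exactly what the asymmetric definition of $Q$ rules out: a reflected $1$-shock in $\mathcal{M}$ (approaching $\eta_0$) is excluded from $Q^m$ by construction (Remark~\ref{rem:F_asymm}), so no condition on $K_\eta^m,K_\zeta^m$ arises from this subcase at all; in $\mathcal{M}$ one only uses that the interacting family's contribution to $Q^m$ decreases. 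Only in $\mathcal{L},\mathcal{R}$ does the reflected shock enter $Q$ (with weight $\xi K_\zeta^r|\zeta|$, resp.\ $\xi K_\eta^\ell|\eta|$), and there it is absorbed by the $\xi(\xi-1)|\eps_j|$ budget using $K_\zeta^r|\zeta|\le\xi-1$, which follows from \eqref{cond13bis}. Fixing the proof therefore requires re-deriving each $\Delta L$, $\Delta Q$ with the cases correctly matched, not merely tightening your bookkeeping.
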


\begin{proof}
First, we consider the interactions taking place in $\mathcal{M}$, see Figure~\ref{fig:inter4}. Here, we only cover the case of interactions between two $3$-waves $\alpha_3$ and $\beta_3$ giving rise to $\eps_1$ and $\eps_3$ (the $1$-waves case is analogous).


\begin{figure}[htbp]
\begin{picture}(100,80)(-130,-15)
\setlength{\unitlength}{0.8pt}

\put(130,0){
\put(-75,0){\line(0,1){40}} \put(-77,-5){\makebox(0,0){$\eta_0$}}
\put(-75,40){\line(0,1){30}} 
\put(55,0){\line(0,1){40}} \put(59,-5){\makebox(0,0){$\zeta_0$}}
\put(55,40){\line(0,1){30}} 
\put(0,40){\line(-1,-1){45}}\put(-55,0){\makebox(0,0){$\alpha_3$}}
\put(0,40){\line(-2,-3){30}}\put(-10,0){\makebox(0,0){$\beta_3$}}
\put(0,40){\line(1,2){15}} \put(20,75){\makebox(0,0){$\eps_3$}}
\put(0,40){\line(-1,1){30}} \put(-30,75){\makebox(0,0){$\eps_1$}}
\put(-100,40){\makebox(0,0){$\mathcal{L}$}}
\put(30,40){\makebox(0,0){$\mathcal{M}$}}
\put(90,40){\makebox(0,0){$\mathcal{R}$}}
}

\end{picture}

\caption{\label{fig:inter4}{Interactions of $3$-waves in $\mathcal{M}$.}}
\end{figure}


\noindent When both $\alpha_3$ and $\beta_3$ are shocks, by Lemma~\ref{lem:shock-riflesso} we have that $\eps_1$ is a rarefaction and we notice as in \cite[Proposition 5.8]{ABCD} that
\begin{equation}\label{Delta_L_xi_13}
\Delta L+|\eps_1|(\xi-1)=0\,,
\end{equation}
for any $\xi\ge 1$. Moreover, we have
\begin{align*}
\Delta Q&=K_\eta^m|\eps_{1}\eta|\,,\\
\Delta F&=\left[-(\xi-1)+K_\eta^m|\eta|\right]|\eps_{1}|
\end{align*}
and $F$ is non-increasing by \eqref{cond13}$_{1,3}$.
On the other hand, when the two interacting waves are of different type,
for example $\alpha_3<0<\beta_3$, as in \cite[Proposition 5.8]{ABCD} one
can prove that it holds
\begin{equation}\label{Delta_L_xi_13_SR}
\Delta L+\xi(\xi-1)|\eps_1|\le 0
\end{equation}
by condition \eqref{cond13}$_{1}$. If $\eps_3$ is a rarefaction, then
$\Delta Q= K_\zeta^m\left(|\eps_{3}|-|\beta_{3}|\right)|\zeta|$ and $F$
decreases by Lemma~\ref{lem:shock-riflesso}; if $\eps_3$ is a shock,
then $\Delta Q= -K_\zeta^m|\beta_{3}\zeta|$ and, again, $F$ decreases.
Remark that the analysis of the interactions between $1$-waves requires
symmetrically the condition $K_\zeta^m\le(\xi-1)/|\zeta|$.

Next, we analyze the case of interactions taking place in $\mathcal{R}$
(similarly one proceeds to analyze those occurring in $\mathcal{L}$).
As for interactions between two $1$-waves, it is easy to verify that $F$
decreases with no need of other conditions than \eqref{cond13}$_{1}$.


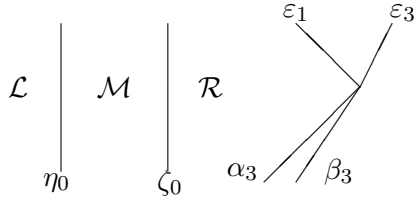
\begin{figure}[htbp]
\begin{picture}(100,80)(-130,-15)
\setlength{\unitlength}{0.8pt}

\put(190,0){
\put(-140,0){\line(0,1){40}} \put(-142,-5){\makebox(0,0){$\eta_0$}}
\put(-140,40){\line(0,1){30}} 
\put(-90,0){\line(0,1){40}} \put(-89,-5){\makebox(0,0){$\zeta_0$}}
\put(-90,40){\line(0,1){30}} 
\put(0,40){\line(-1,-1){45}}\put(-55,0){\makebox(0,0){$\alpha_3$}}
\put(0,40){\line(-2,-3){30}}\put(-10,0){\makebox(0,0){$\beta_3$}}
\put(0,40){\line(1,2){15}} \put(20,75){\makebox(0,0){$\eps_3$}}
\put(0,40){\line(-1,1){30}} \put(-30,75){\makebox(0,0){$\eps_1$}}
\put(-160,40){\makebox(0,0){$\mathcal{L}$}}
\put(-115,40){\makebox(0,0){$\mathcal{M}$}}
\put(-70,40){\makebox(0,0){$\mathcal{R}$}}
}

\end{picture}

\caption{{Interactions of $3$-waves in $\mathcal{R}$.}\label{fig:inter5}}
\end{figure}


\noindent As for interactions between $3$-waves (Figure~\ref{fig:inter5}),
instead, we need to require \eqref{cond13}$_{1,3}$ in order to have
$\Delta F(t)\le 0$. Indeed, we have \eqref{Delta_L_xi_13} when the
interacting waves $\alpha_3,\beta_3$ are both shocks, while in the
other two cases of interaction it still holds \eqref{Delta_L_xi_13_SR}
under condition \eqref{cond13bis}$_1$. Also, if $\alpha_3,\beta_3<0$ we have
\begin{align*}
\Delta Q&=K_\eta^r|\eps_1\eta|+K_\zeta^r|\eps_{1}\zeta|\,,\\
\Delta F&= \left[-(\xi-1)+K_\eta^r|\eta|+K_\zeta^r|\zeta|\right]|\eps_1|\,,
\end{align*}
while if, for example, $\alpha_3<0<\beta_3$ we have
\begin{align*}
\Delta Q&=K_\zeta^r\,\xi|\eps_1\zeta|\,,\\
\Delta F&\le \xi\left[-(\xi-1)+K_\zeta^r|\zeta|\right]|\eps_1|\,.
\end{align*}
Consequently, $F$ is non-increasing by \eqref{cond13bis}$_{1}$.

Symmetrically, in $\mathcal{L}$ we get the condition
$K_\eta^\ell|\eta|+K_\zeta^\ell|\zeta|\le \xi-1$.
\end{proof}


\subsection{Decreasing of the functional $F$ and control of the variations}\label{sec:decreasingF}

In order that $F$ decreases across any interaction, the various parameters in \eqref{rogna}, \eqref{eq:composite}, \eqref{eq:composite2}, \eqref{eq:composite3}, \eqref{cond13} and \eqref{cond13bis} are chosen in the following order. Given $m_o$ to be fixed later on, we choose in turn $\xi$, $K_{\eta,\zeta}^m$, $K_{\eta,\zeta}^{\ell,r}$ and finally $\rho$. Remark that in the following calculations we keep (almost) everywhere strict inequalities, since they are needed in the analysis on the control of the size of the composite waves (see Section~\ref{generationorder}). 

We notice that, for the choice of $K_{\zeta,\eta}^m$, by \eqref{eq:composite}$_{3,4}$ and \eqref{cond13}$_{}$ it must hold
\begin{equation}\label{eq:intervalKm}
\frac{\xi-1}{2}<\min\left\{\frac{\xi-1}{|\eta|},\frac{\xi-1}{|\zeta|}\right\}\,,
\end{equation}
which is always satisfied since $|\eta|,|\zeta|<2$. Moreover, by putting together the conditions obtained in \eqref{eq:composite}$_{3}$ with \eqref{eq:composite2}$_1$ we get necessarily
\begin{equation}\label{Ks-cond}
(\xi-1)\bigl(1+\frac{|\zeta|}{2}\bigr)\frac{|\eta|}{2}
< K_\eta^r|\eta|+(K_\zeta^r-1)|\zeta|
<(\xi-1)-K_\zeta^r|\zeta|\le (\xi-1)-|\zeta|\,.
\end{equation}
Hence, it follows
\begin{equation*}
(\xi-1)\bigl(1+\frac{|\zeta|}{2}\bigr)\frac{|\eta|}{2}<(\xi-1)-|\zeta|\,,
\end{equation*}
which is equivalent to
\begin{equation}\label{eq:cxi2}
1+\frac{|\zeta|}{1-(1+{|\zeta|}/{2}){|\eta|}/{2}}<\xi\,,
\end{equation}
provided that $1-(1+{|\zeta|}/{2}){|\eta|}/{2}>0$. Analogously, from \eqref{eq:composite}$_4$ and \eqref{eq:composite2}$_1$ we get
$$
1+\frac{|\eta|}{1-(1+{|\eta|}/{2}){|\zeta|}/{2}}<\xi\,,
$$
provided that $1-(1+{|\eta|}/{2}){|\zeta|}/{2}>0$. Therefore, it must hold
\begin{equation}\label{eq:H2}
1+\max\left\{\frac{|\zeta|}{1-(1+{|\zeta|}/{2}){|\eta|}/{2}},
\frac{|\eta|}{1-(1+{|\eta|}/{2}){|\zeta|}/{2}}\right\}<\xi
\end{equation}
under the \emph{stability} condition
$$
\min\left\{1-\bigl(1+\frac{|\zeta|}{2}\bigr)\frac{|\eta|}{2},
1-\bigl(1+\frac{|\eta|}{2}\bigr)\frac{|\zeta|}{2}\right\}>0\,,
$$
which is equivalent to \eqref{stab}. Then, by \eqref{eq:Hdef}, \eqref{eq:composite}$_1$ and \eqref{eq:H2} we obtain the condition
\begin{equation*}
      1+\mathcal{H}(|\eta|,|\zeta|)< \xi \le \frac{1}{c(m_o)}\,.
\end{equation*}

Summarizing, the choice of the parameters proceeds as follows. Let $|\eta|,|\zeta|$ satisfy \eqref{stab}.
\begin{itemize}
\item Recalling \eqref{cond13}$_1$, we fix $m_o$ such that
\begin{equation}\label{eq:cmo}
c(m_o)<\frac{1}{1+\mathcal{H}(|\eta|,|\zeta|)}\,.
\end{equation}
We will prove in Section~\ref{subsec:proof} that this is possible under suitable conditions on the initial data. Notice that $c$ is a strictly increasing function of $m_o$ and then it is invertible.
\item Then, we choose $\xi$ in the non-empty interval given by
\begin{equation}\label{eq:cmxi}
1+\mathcal{H}(|\eta|,|\zeta|)<\xi \le\frac{1}{c(m_o)}\,.
\end{equation}
\item We choose $K_\eta^m,K_\zeta^m$ such that
\begin{align}
    \frac{\xi-1}{2} < K_\eta^m<\min\left\{\frac{\xi-1}{|\eta|},
    \frac{(\xi-1)-|\zeta|}{(1+|\zeta|/2)|\eta|}\right\}
    =\frac{(\xi-1)-|\zeta|}{(1+|\zeta|/2)|\eta|}\,,\label{eq:cndKmeta}\\[6pt]
    \frac{\xi-1}{2} < K_\zeta^m<\min\left\{\frac{\xi-1}{|\zeta|},
    \frac{(\xi-1)-|\eta|}{(1+|\eta|/2)|\zeta|}\right\}
    =\frac{(\xi-1)-|\eta|}{(1+|\eta|/2)|\zeta|}\,.\label{eq:cndKmzeta}
\end{align}
This is possible since these two intervals are non-empty by \eqref{eq:cmxi} and \eqref{eq:cxi2}. Thus, \eqref{eq:composite}$_{3,4}$ and \eqref{cond13}$_{2,3}$ follow and it holds
\begin{align}\label{eq:kmetazeta2}
     K_\eta^m\bigl(1+\frac{|\zeta|}{2}\bigr)|\eta|<(\xi-1)-|\zeta|\,, \qquad K_\zeta^m\bigl(1+\frac{|\eta|}{2}\bigr)|\zeta|<(\xi-1)-|\eta|\,.
\end{align}
\item By \eqref{eq:kmetazeta2}, we choose $K_\eta^r,K_\zeta^\ell$ that satisfy
\begin{align}
    K_\eta^m\bigl(1+\frac{|\zeta|}{2}\bigr)|\eta|\le K_\eta^r|\eta|
    <(\xi-1)-|\zeta|\,, \label{eq:cndKreta}\\[6pt]
    K_\zeta^m\bigl(1+\frac{|\eta|}{2}\bigr)|\zeta|\le K_\zeta^\ell|\zeta|
    <(\xi-1)-|\eta|\,;\label{eq:cndKlzeta}
\end{align}
then, we can take $K_\eta^\ell$ and $K_\zeta^r$ such that
\begin{align}
	 1&<K_\zeta^r<1+\frac{(\xi-1)-|\zeta|-K_\eta^r|\eta|}{|\zeta|}\,,\label{eq:cndKrzeta}\\
	 1&<K_\eta^\ell<1+\frac{(\xi-1)-|\eta|-K_\zeta^\ell|\zeta|}{|\eta|}\,,\label{eq:cndKleta}
\end{align}
from which \eqref{eq:composite2} and \eqref{cond13bis} follow.
\item Finally, we choose $\rho$ that satisfies \eqref{eq:composite3}.
\end{itemize}

In the following proposition we collect the results obtained so far.

\begin{proposition}[Local decreasing]\label{prop:last}
Let $m_o > 0$ satisfy \eqref{eq:cmo} and consider the interaction of two waves at time $t$ that satisfy \eqref{rogna}.
Moreover, assume that $\xi$, $K_{\eta,\zeta}^{\ell,m,r}$ and $\rho$ satisfy \eqref{eq:cmxi}--\eqref{eq:cndKleta} and \eqref{eq:composite3}. Then,
\begin{equation}\label{eq:Fdecr}
\Delta F(t) \le 0\,.
\end{equation}
\end{proposition}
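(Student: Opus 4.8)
The plan is to reduce the assertion to the two interaction propositions already proved and to check that the ordered parameter selection makes all their hypotheses hold at once. First I would classify the interactions possible at time $t$. Because the front-tracking construction guarantees that exactly two fronts meet, the interaction is of precisely one of three types: a $1$- or $3$-wave hitting a composite wave $\eta_0$ or $\zeta_0$; two waves of the same family; or a $1$-wave crossing a $3$-wave. The last case is settled at once by Lemma~\ref{lem:shock-riflesso}(i): the two fronts cross without changing strengths and remain within a single region $\mathcal{L}$, $\mathcal{M}$ or $\mathcal{R}$, so $L$, $Q$ and $L^0$ are all unaffected and $\Delta F(t)=0$. The first type is governed by the proposition on interactions with the composite waves, whose hypotheses are \eqref{eq:composite}, \eqref{eq:composite2} and \eqref{eq:composite3}; the second by the proposition on interactions between waves of the same family, whose hypotheses are \eqref{rogna}, \eqref{cond13} and \eqref{cond13bis}. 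The bound \eqref{rogna} is assumed in the statement, and it is exactly the ingredient that lets the same-family proposition apply (through $c(\alpha_i)\le c(m_o)$).

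The heart of the proof is then a one-by-one match of the prescribed choices --- $m_o$, then $\xi$, then $K_{\eta,\zeta}^m$, then $K_{\eta,\zeta}^{\ell,r}$, then $\rho$ --- against these hypotheses. Condition \eqref{eq:cmxi} gives $1+\mathcal{H}(|\eta|,|\zeta|)<\xi\le 1/c(m_o)$, hence $\xi>1$, which is both \eqref{eq:composite}$_1$ and \eqref{cond13}$_1$. The bounds \eqref{eq:cndKmeta}--\eqref{eq:cndKmzeta} yield $\tfrac{\xi-1}{2}<K_\eta^m<\tfrac{\xi-1}{|\eta|}$ and the analogue for $K_\zeta^m$, i.e.\ \eqref{eq:composite}$_{3,4}$ and \eqref{cond13}$_{2,3}$. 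From \eqref{eq:cndKrzeta}--\eqref{eq:cndKleta} one reads off $K_\zeta^r,K_\eta^\ell>1$, which is \eqref{eq:composite}$_2$, and a direct rearrangement of the same two bounds gives $K_\eta^r|\eta|+K_\zeta^r|\zeta|<\xi-1$ together with its left-region analogue, namely \eqref{cond13bis}. Combining \eqref{eq:cndKreta} with $K_\zeta^r>1$ produces $K_\eta^m(1+|\zeta|/2)|\eta|\le K_\eta^r|\eta|+(K_\zeta^r-1)|\zeta|$, that is \eqref{eq:composite2}$_1$, and symmetrically \eqref{eq:composite2}$_2$; finally $\rho$ is picked so that \eqref{eq:composite3} holds outright.

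The step I expect to carry the real weight is not any of these implications but the preliminary claim that the nested selection intervals are non-empty, so that the required parameters exist at all. This is where the stability hypothesis \eqref{stab} is used: it is equivalent to the positivity of the two denominators appearing in \eqref{eq:Hdef}, which makes \eqref{eq:cxi2} and its symmetric partner meaningful and, being dominated by $\mathcal{H}$, implied by $\xi>1+\mathcal{H}(|\eta|,|\zeta|)$; this is precisely what renders the intervals \eqref{eq:cndKmeta}--\eqref{eq:cndKmzeta} non-empty, while \eqref{eq:kmetazeta2} does the same for \eqref{eq:cndKreta}--\eqref{eq:cndKlzeta}, and \eqref{eq:cmo} for the interval in \eqref{eq:cmxi}. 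Once existence is secured, the matching above shows that each of the two interaction propositions applies in its own case; together with the trivial crossing case this gives $\Delta F(t)\le 0$ at every interaction, which is the claim.
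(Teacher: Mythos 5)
Your proposal is correct and takes essentially the same route as the paper: Proposition~\ref{prop:last} is obtained there precisely by the parameter-selection chain of Section~\ref{sec:decreasingF}, which checks that \eqref{eq:cmxi}--\eqref{eq:cndKleta} together with \eqref{eq:composite3} imply \eqref{eq:composite}, \eqref{eq:composite2}, \eqref{cond13} and \eqref{cond13bis}, so that the two interaction propositions cover every possible collision. Your explicit handling of the trivial $1$--$3$ crossing case and of the non-emptiness of the nested selection intervals matches what the paper treats implicitly (via Lemma~\ref{lem:shock-riflesso}(i) and the discussion preceding the proposition).
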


Now, we can prove the global in time decreasing of the functional $F$.

\begin{proposition}[Global decreasing]\label{global}
Let $m_o>0$ satisfy \eqref{eq:cmo} and choose parameters $\xi$, $K_{\eta,\zeta}^{\ell,m,r}$ and $\rho$ as in Proposition~\ref{prop:last}.
Moreover, assume that
    \begin{equation}
	\label{eq:boundL-bis}
	\bar{L}^\ell(0) + c(m_o)\hspace{0.8pt}\bar{L}^m(0) +\bar{L}^r(0) \le m_o\hspace{0.8pt} c(m_o)\,,
    \end{equation}
and that the approximate solution is defined in $[0,T]$. Then, $F(0)\le m_o$, $\Delta F(t)\le 0$ for every $t\in(0,T]$ and \eqref{rogna} is satisfied.
\end{proposition}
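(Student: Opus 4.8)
The plan is to prove the three assertions together by induction on the ordered interaction times $0<t_1<t_2<\cdots$ of the approximate solution in $[0,T]$, using the local decrease of Proposition~\ref{prop:last} as the engine. The point to keep in mind is that Proposition~\ref{prop:last} presupposes the a~priori shock bound \eqref{rogna} on the interacting waves, whereas here \eqref{rogna} is part of the conclusion; the induction is exactly the device that closes this loop, propagating $F\le m_o$ and \eqref{rogna} from one interaction to the next.

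The key elementary observation, which I would isolate first, is the bootstrap implication
\[
F(t)\le m_o \quad\Longrightarrow\quad \eqref{rogna}\ \text{holds at time }t.
\]
Indeed, an interacting shock of strength $\delta_i$ contributes the term $\xi|\delta_i|$ to the linear functional $L$, and since every remaining summand of $L$, $Q$ and $L^0$ is nonnegative we get $\xi|\delta_i|\le L\le F(t)\le m_o$; as $\xi\ge1$ this yields $|\delta_i|\le m_o$. Rarefactions require no argument, their sizes being controlled by the very construction of the scheme. Hence the bound $F\le m_o$ is self-consistent with \eqref{rogna}.

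The base case $F(0)\le m_o$ is where the substance lies. After resolving the initial Riemann problems, including the $\RII$ construction of the composite waves $\eta_0,\zeta_0$ at the interfaces (so that $L^0(0)$ records the initial reflected components), I would estimate $F(0)=L(0)+Q(0)+L^0(0)$ region by region in terms of $\bar L^\ell(0),\bar L^m(0),\bar L^r(0)$. The bounds $L^{\ell,m,r}\le\xi\,\bar L^{\ell,m,r}$, the constraints \eqref{eq:cndKmeta}--\eqref{eq:cndKleta} that cap the coefficients appearing in $Q$, and the upper bound $\xi\le 1/c(m_o)$ from \eqref{eq:cmxi} are the ingredients that must be assembled so that the weighted total collapses onto the left-hand side of \eqref{eq:boundL-bis}; hypothesis \eqref{eq:boundL-bis} then delivers $F(0)\le m_o$. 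The asymmetric weight $c(m_o)$ carried by the middle region is precisely what makes the $\mathcal{M}$-contribution combine favourably, reflecting the asymmetry deliberately built into $F$ (Remark~\ref{rem:F_asymm}).

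With these two facts in hand the induction closes immediately: if $F\le m_o$ for every time before $t_k$, the bootstrap gives \eqref{rogna} at $t_k$, Proposition~\ref{prop:last} applies and yields $\Delta F(t_k)\le0$, whence $F(t_k^+)\le F(t_k^-)\le m_o$; since $F$ is constant between interactions the bound survives past $t_k$. As the solution is defined on all of $[0,T]$ the interaction times do not accumulate there, so the induction exhausts $[0,T]$ and delivers $\Delta F(t)\le 0$ on $(0,T]$, $F\le m_o$ throughout, and thus \eqref{rogna}. I expect the base-case estimate to be the genuine obstacle: one must track the Lax $1,3$-waves, the composite reflected parts feeding $L^0(0)$, and the interaction-potential coefficients at once, and verify that the region-wise bookkeeping telescopes to \eqref{eq:boundL-bis} with exactly the factor $c(m_o)$ — the inductive step and the bootstrap being comparatively routine.
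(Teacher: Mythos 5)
Your overall architecture --- induction over the interaction times, Proposition~\ref{prop:last} as the engine, and a bootstrap that converts a bound on $F$ into the shock bound \eqref{rogna} via the weight $\xi$ carried by shocks in $L$ --- is the same as the paper's. The genuine gap is the base case, exactly the step you flag as ``where the substance lies'': the inequality $F(0)\le m_o$ does \emph{not} follow from the ingredients you list, and under \eqref{eq:boundL-bis} alone it can actually fail. The sharp region-wise bound is
\begin{equation*}
F(0)\;\le\;\xi^2\,\bar{L}^\ell(0)+\xi\,\bar{L}^m(0)+\xi^2\,\bar{L}^r(0)\,,
\end{equation*}
since, for instance, a $3$-shock $\delta_3$ in $\mathcal{L}$ contributes $\xi\bigl(1+K_\eta^\ell|\eta|\bigr)|\delta_3|$ to $L+Q$, and \eqref{cond13bis} allows $K_\eta^\ell|\eta|$ to be close to $\xi-1$ (take $|\zeta|$ small). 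Combining this with \eqref{eq:boundL-bis} and $\xi\le 1/c(m_o)$ yields only $F(0)\le \xi\, m_o$: in the extreme configuration $\bar{L}^m(0)=\bar{L}^r(0)=0$, $\bar{L}^\ell(0)=m_o\,c(m_o)$, $K_\eta^\ell|\eta|$ near $\xi-1$ and $\xi$ near $1/c(m_o)$, one gets $F(0)$ close to $m_o/c(m_o)>m_o$. So the region-wise bookkeeping does not telescope onto \eqref{eq:boundL-bis} with the factor you expect; the mismatch is exactly one power of $\xi$. Since your bootstrap ($F\le m_o\Rightarrow$ \eqref{rogna}) is anchored to this unprovable invariant, your induction never gets started.

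The paper's proof avoids the issue by never using $F\le m_o$. Because a shock of size $|\delta_i|$ enters $L$ with coefficient $\xi$, what \eqref{rogna} actually requires is only $\tfrac{1}{\xi}F(t)\le m_o$, and this weaker invariant is provable: by the local decrease $F(t)\le F(0)$, and
\begin{equation*}
\frac{1}{\xi}\,F(0)\;\le\;\xi\,\bar{L}^\ell(0)+\bar{L}^m(0)+\xi\,\bar{L}^r(0)
\;\le\;\frac{1}{c(m_o)}\Bigl[\bar{L}^\ell(0)+c(m_o)\,\bar{L}^m(0)+\bar{L}^r(0)\Bigr]\;\le\;m_o\,,
\end{equation*}
where the prefactor $1/\xi$ absorbs precisely the extra power of $\xi$, and $\xi\le 1/c(m_o)$ from \eqref{eq:cmxi} matches the remaining weights to those of \eqref{eq:boundL-bis}. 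With this corrected invariant your induction closes exactly as you describe: \eqref{rogna} holds at each interaction, Proposition~\ref{prop:last} gives $\Delta F\le 0$, and $F$ stays below $F(0)$. Two smaller remarks: the claim ``$F(0)\le m_o$'' in the proposition is itself established by the paper only in this weaker form $\tfrac{1}{\xi}F(0)\le m_o$, so your difficulty is inherited from the statement, not an oversight of yours alone; and at $t=0$ the composite waves are pure $2$-waves, so $L^0(0)=0$ rather than ``recording the initial reflected components'' --- this fact is needed for the displayed bound on $F(0)$ to hold.
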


\begin{proof}
For convenience, we use notation $L^m_{iR}$ and $L^m_{iS}$ to indicate the partial sums in $L^m$ due to $i$-rarefaction waves ($iR$) and $i$-shock waves ($iS$), respectively. By \eqref{eq:composite}$_{3,4}$ we have
\begin{align*}
F^m(0)&= L^m(0)+Q^m(0)\le\\
       &\le L^m_{1S}(0)+L^m_{1R}(0)\left(1+K_\eta^m|\eta|\right)+L^m_{3S}(0)+L^m_{3R}(0)\left(1+K_\zeta^m|\zeta|\right)\le \\
       &\le L^m_{1S}(0)+\xi L^m_{1R}(0)+L^m_{3S}(0)+ \xi L^m_{3R}(0)\le \xi\bar{L}^m(0)\,.
\end{align*}
Moreover, from \eqref{cond13bis} it follows
\begin{align*}
F^{\ell,r}(0)\le L^{\ell,r}(0)\left(1+K_{\eta}^{\ell,r}|\eta|+K_\zeta^{\ell,r}|\zeta|\right)\le \xi^2\bar{L}^{\ell,r}(0)\,.
\end{align*}
Then,
\begin{equation}\label{eq:gdproof}
F(0)=F^\ell(0)+F^m(0)+F^r(0)\le \xi^2\bar{L}^\ell(0)+ \xi\bar{L}^m(0)+ \xi^2\bar{L}^r(0)\,.
\end{equation}
Now, for a fixed $t\le T$, suppose by induction that $F(\tau)\le m_o$ and $\Delta F(\tau)\le 0$ for every $0<\tau<t$, interaction time. Then, by Proposition~\ref{prop:last} we have $\Delta F(t)\le 0$. This implies
$$
F(t)\le F(0)\le \xi^2\bar{L}^\ell(0)+ \xi\bar{L}^m(0)+ \xi^2\bar{L}^r(0)\,.
$$
By \eqref{eq:gdproof} and \eqref{eq:boundL-bis} the size $|\delta_i|$ of a shock ($i=1,3$) at time $t$ satisfies
$$
|\delta_i|\le  \frac{1}{\xi}F(t)\le \xi\bar{L}^\ell(0)+ \bar{L}^m(0)+ \xi\bar{L}^r(0)\le \frac{1}{c(m_o)}\bar{L}^\ell(0)+\bar{L}^m(0)+ \frac{1}{c(m_o)}\bar{L}^r(0)\le m_o\,.
$$
Hence, \eqref{rogna} is satisfied and the proof is complete.
\end{proof}


\section{The convergence and consistency of the algorithm}\label{sec:convergence}
\setcounter{equation}{0}

In this section we finally conclude the proof of Theorem~\ref{thm:main} on the convergence and consistency of the front tracking algorithm.

In order to be well-defined, the algorithm must satisfy three main requirements: i) the size of rarefaction waves must remain small; ii) the total number of wave fronts and interactions must be finite; iii) the total size of the composite waves must vanish as the approximation parameter $\nu$ tends to $+\infty$. 
The first one is accomplished as in \cite[Lemma $6.1$]{amadori-corli-siam} and, in particular,
the size $\eps$ of any rarefaction wave is bounded by
\begin{equation}\label{eq:boundrar}
0<\eps<\sigma\left(1+\frac{1}{2}\max\left\{|\eta|,|\zeta|\right\}\right)< 2\sigma\,.
\end{equation}
The second requirement can be proved as in \cite[Lemma $6.2$ and Proposition $6.3$]{amadori-corli-siam}; while the next section is devoted to the proof of iii), i.e.\ of the consistency of the algorithm.

\subsection{Control of the total size of the composite waves}\label{generationorder}

Our wave front-tracking scheme exploits the concept of generation order of a wave to prove that the size of the error attached to the two phase waves tends to zero. We introduce such generation order as in \cite{amadori-corli-siam} for $1$- and $3$-waves, while for the composite waves we proceed in the following way. Consider an interaction with $\delta_0$ (either $\eta_0$ or $\zeta_0$): we assign order $1$ to the component $\delta$ (which never changes) and order $k_{\gamma}+1$ to the outgoing $j$-th component 
when the interacting wave is an $i$-wave $\gamma$ of size $<\rho$, $i,j=1,3$, $j\neq i$, while we keep the order of the other component 
 unchanged. In other words, denote by $(k_{\delta_0^1},1,k_{\delta_0^3})$ the triple made of the orders of the three components and consider the interaction with a wave $\gamma$ solved by the Simplified solver. Then, for the outgoing $0$-wave $\eps_0=(\eps_0^1,\delta,\eps_0^3)$ we have
\begin{equation*}
(k_{\eps_0^1},1,k_{\eps_0^3})=\begin{cases}
(k_{\gamma}+1,1,k_{\delta_0^3}) &\quad \text{if $\gamma$ is of family $3$}\,,\\
(k_{\delta_0^1},1,k_{\gamma}+1) &\quad \text{if $\gamma$ is of family $1$}\,.
\end{cases}
\end{equation*}
Mimicking what has already been done in \cite{amadori-corli-siam,ABCD}, for every $k=1,2,\dots$ we define the functionals $L_k$, $Q_k$ and $F_k$ simply by referring $L,Q,F$ to waves with order $k$. Moreover, we define $\tilde{L}_k=\sum_{h\ge k}L_h$ and $\tilde{F}_k=\sum_{h\ge k}F_h$.
In detail, for $k\ge 1$ and for $\xi,K_{\eta,\zeta}^{\ell,m,r}$ as in Proposition~\ref{prop:last}, we define $L_k= L_k^{\ell}+L_k^m+L_k^r$, where
\begin{equation*}
L_k^{\ell,m,r}=\sum_{\genfrac{}{}{0pt}{}{i=1,3,\,\delta_i>0,\,k_{\delta_i}=k}{\delta_i\in\mathcal{L},\mathcal{M},\mathcal{R}}}|\delta_i| + \xi \sum_{\genfrac{}{}{0pt}{}{i=1,3,\,\delta_i<0,\,k_{\delta_i}=k}{{\delta_i\in\mathcal{L},\mathcal{M},\mathcal{R}}}}|\delta_i|
\end{equation*}
and $Q_k=Q_k^\ell+Q_k^m+Q_k^r$, where
{\allowdisplaybreaks \begin{align*}
Q_k^\ell &=\left(K_\eta^\ell|\eta|+K_\zeta^\ell|\zeta|\right)\sum_{\genfrac{}{}{0pt}{}{\delta_3>0,\,k_{\delta_3}=k}{\delta_3\in\mathcal{L}}}|\delta_3| +\xi K_\eta^\ell\sum_{\genfrac{}{}{0pt}{}{\delta_3<0.\,k_{\delta_3}=k}{\delta_3\in\mathcal{L}}}|\delta_3\eta|\,,	 \\
Q_k^{m}&=K_\eta^m\sum_{\genfrac{}{}{0pt}{}{\delta_1>0.\,\,k_{\delta_1}=k}{\delta_1\in\mathcal{M}}}|\delta_1\eta| + K_\zeta^m\sum_{\genfrac{}{}{0pt}{}{\delta_3>0,\,k_{\delta_3}=k}{\delta_3\in\mathcal{M}}}|\delta_3 \zeta|\,,\\
Q_k^r &= \left(K_\eta^r|\eta|+K_\zeta^r|\zeta|\right)\sum_{\genfrac{}{}{0pt}{}{\delta_1>0,\,k_{\delta_1}=k}{\delta_1\in\mathcal{R}}}|\delta_1| +\xi K_\zeta^r \sum_{\genfrac{}{}{0pt}{}{\delta_1<0,\,k_{\delta_1}=k}{\delta_1\in\mathcal{R}}}|\delta_1\zeta|\,.
\end{align*}}%
Moreover, we define \begin{equation}\label{Fk}
F_k  =  L_k+ Q_k+ L_k^0\,,
\end{equation}
where
\begin{equation}\label{Lk0}
L_k^{0}=\sum_{\tau_k < t}\left(|\Delta \eta_0|+|\Delta \zeta_0|\right)(\tau_k)\,,
\end{equation}
with $\tau_k$ in \eqref{Lk0} denoting the interaction times when a small reflected wave of order $k$ is born and attached to one of the composite waves. As a consequence, only the times $\tau_k$ where the Simplified solver is used give positive summands in \eqref{Lk0}, since when the Accurate solver is used we have $|\Delta \eta_0|+|\Delta \zeta_0|=0$.

By $I_k$ we denote the set of times in which two waves of the same family of order at most $k$ interact with each other, while by $J_k$ we denote the set of times in which a wave of order $k$ hits one of the two composite waves. Moreover, let ${\cal T}_k=I_k \cup J_k$. Here, we will prove the analogous of \cite[Proposition $6.1$]{ABCD}. In particular, we define:
\begin{equation}\label{eq:mu}
\begin{aligned}
  \mu  = \max\bigg\{&\frac{1}{2K_\eta^\ell-1}\,,\,\frac{1}{2K_\zeta^r-1}\,,\,\frac{\xi}{1+2K_\eta^m}\,,\,\frac{\xi}{1+2K_\zeta^m}\,,\,\frac{1+K_\eta^m|\eta|}{\xi}\,,\,\frac{1+K_\zeta^m|\zeta|}{\xi}\,,\,\\
  &\frac{1+(K_\eta^\ell|\eta|+K_\zeta^\ell|\zeta|)}{\xi}\,,\,\frac{1+(K_\eta^r|\eta|+K_\zeta^r|\zeta|)}{\xi}\,,\,\frac{C_o}{\xi(2K_\eta^\ell-1)}\,,\,\frac{C_o}{\xi(2K_\zeta^r-1)}\bigg\}.
\end{aligned}
\end{equation}
We have that $\mu<1$ by the conditions required in Proposition~\ref{prop:last}.

\begin{proposition}\label{prop:genorder}
Let $m_o,\xi,K_{\eta,\zeta}^{\ell,m,r}$ satisfy the assumptions of Proposition~\ref{prop:last} and assume that
\eqref{rogna} holds, that is, $|\delta_i|\le m_o$ for the size of every wave.
Then, the following holds for $\tau\in {\cal T}_h$, $h\ge 1$:
\begin{align}\label{Fk-segni-1}
&\Delta F_{h}<0\,,\qquad \Delta F_{h+1}>0\,,\\ \label{Fk-segni-2}
&\Delta F_k=0\qquad\,\, \, \hbox{ if }\ k\ge h+2\,.
\end{align}
Moreover,
\begin{equation}
\label{h=k-1b-XXX}
[\Delta F_{h+1}]_+ \le \mu \Bigl([\Delta F_h]_-   -
\sum_{\ell=1}^{h-1} \Delta F_\ell \Bigr)\,.
\end{equation}
\end{proposition}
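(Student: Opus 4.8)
The plan is to refine, order by order, the global interaction analysis already carried out in Section~\ref{sec:decreasingF}, and then to read off the quantitative ratio \eqref{h=k-1b-XXX} from the explicit interaction estimates. I fix an interaction time $\tau\in\mathcal{T}_h$ (so the incoming waves have maximal generation order $h$) and use the order rules recalled before the statement: a reflected wave born at $\tau$ receives order $h+1$, while a surviving same-family wave inherits the smallest order among the incoming waves it continues, and a transmitted wave across a composite wave keeps its incoming order. With this bookkeeping, \eqref{Fk-segni-2} is immediate: no incoming wave has order $\ge h+1$ and the only waves created have order exactly $h+1$, so nothing of order $\ge h+2$ is produced or destroyed and $\Delta F_k=0$ there. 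The sign $\Delta F_{h+1}>0$ is equally direct: at order $h+1$ only creation occurs, namely a reflected $j$-wave $\eps_j$ in the same-family interactions of $I_h$, or the reflected component attached to $\eta_0,\zeta_0$ by the Simplified solver in $J_h$, each entering $L_{h+1}$, $Q_{h+1}$ or $L^0_{h+1}$ with strictly positive weight while nothing of that order is removed.

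For $\Delta F_h<0$ I would replay the computations of the propositions of Section~\ref{sec:interaction} — the composite-wave interactions via Lemma~\ref{lem:intest}, \eqref{eq:rels}, \eqref{eq:sign}, and the same-family interactions via Lemma~\ref{lem:shock-riflesso} — retaining only the waves of order $h$. In each pattern (a wave of order $h$ hitting $\eta_0$ or $\zeta_0$, or two order-$h$ same-family waves meeting in $\mathcal{L},\mathcal{M},\mathcal{R}$) the order-$h$ part of $\Delta L+\Delta Q+\Delta L^0$ reproduces exactly the combination shown to be non-positive under \eqref{eq:composite}--\eqref{eq:composite3}, \eqref{cond13}, \eqref{cond13bis}. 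Since those parameter conditions were deliberately kept as \emph{strict} inequalities in Section~\ref{sec:decreasingF}, the order-$h$ balance is strictly negative, giving $\Delta F_h<0$.

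Finally, for \eqref{h=k-1b-XXX} I would show, pattern by pattern, that the ratio of the amount created at order $h+1$ to the net amount consumed at orders $\le h$ is bounded by one of the ten terms in \eqref{eq:mu}. For a single wave of order $h$ hitting a composite wave, only orders $h$ and $h+1$ are involved, so $\sum_{\ell=1}^{h-1}\Delta F_\ell=0$ and the right-hand side is $\mu\,[\Delta F_h]_-$; writing out $[\Delta F_{h+1}]_+$ and $[\Delta F_h]_-$ from Lemma~\ref{lem:intest} and maximising over the admissible size of the reflected wave yields, for instance, $1/(2K_\zeta^r-1)$ for $1$-shocks reflected by $\zeta_0$ in the Accurate regime (and, symmetrically, $1/(2K_\eta^\ell-1)$), together with the $C_o$-weighted analogues in the Simplified regime. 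For same-family interactions whose surviving wave carries a strictly smaller order $a<h$, the term $-\sum_{\ell=1}^{h-1}\Delta F_\ell=-\Delta F_a$ precisely removes the change sitting at order $a$, so that $[\Delta F_h]_--\sum_{\ell=1}^{h-1}\Delta F_\ell$ equals the genuine consumption balancing the created reflected wave; this is transparent from $|\eps_1|+|\eps_3|=|\alpha_i|+|\beta_i|$ together with \eqref{Delta_L_xi_13}, and the ratio is bounded by the corresponding $(1+\dots)/\xi$ or $\xi/(1+2K^m)$ term.

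I expect the main obstacle to be the Simplified-solver interactions in which a shock enters the more-liquid middle phase, where the estimate \eqref{eq:intest2} carries the factor $C_o(\rho)>1$: here the naive ratio is slightly too large, and one needs the sharper interaction estimate of Appendix~\ref{appB} to bring it down to the terms $C_o/(\xi(2K_\eta^\ell-1))$ and $C_o/(\xi(2K_\zeta^r-1))$ appearing in \eqref{eq:mu}. Checking that these, and all remaining ratios, are strictly below $1$ under the parameter choices of Proposition~\ref{prop:last} is exactly what guarantees $\mu<1$ and drives the geometric decay of the reflected waves.
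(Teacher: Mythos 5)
Your overall strategy is the same as the paper's: the order bookkeeping gives \eqref{Fk-segni-2} and the signs in \eqref{Fk-segni-1}; $\Delta F_h<0$ follows by re-running the estimates of Section~\ref{sec:interaction} restricted to order-$h$ waves; and \eqref{h=k-1b-XXX} is checked pattern by pattern, noting that for interactions with the composite waves $\sum_{\ell=1}^{h-1}\Delta F_\ell=0$, so that only the ratio $[\Delta F_{h+1}]_+/[\Delta F_h]_-$ must be matched against the entries of \eqref{eq:mu}. Two smaller slips before the main point: the terms $\xi/(1+2K_{\eta,\zeta}^m)$ in \eqref{eq:mu} arise from \emph{rarefactions hitting the composite waves} in the Accurate regime, not from same-family interactions (those give the $(1+\dots)/\xi$ terms via \eqref{eq:ll}); and the identity $|\eps_1|+|\eps_3|=|\alpha_i|+|\beta_i|$ holds only when both incoming waves are shocks --- for incoming waves of opposite sign there is a strict decrease, which is precisely what makes the verification of \eqref{eq:mm} a case-by-case check rather than a transparent cancellation.

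The genuine gap is in the case you yourself single out as critical: a small shock ($|\delta_1|<\rho$, Simplified solver) hitting a composite wave and entering the phase with smaller $a$. Your fix --- invoking Appendix~\ref{appB} to trade the factor $C_o>1$ of \eqref{eq:intest2} for $1$ --- cannot work in the stated generality. Lemma~\ref{lem:bestC0} proves that \eqref{eq:iesimpl2bis} holds \emph{only} for $\delta\le\sqrt{5}-1$ and that this threshold is sharp, while the alternative bound \eqref{eq:iesimpl2bis3} has $\Theta(\delta,z)>1$ precisely in a right neighborhood of $z=0$ whenever $\delta>2/3$ --- and the Simplified regime is exactly that neighborhood, since $|\delta_1|<\rho$ with $\rho$ small. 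Because the stability region of \eqref{stab} contains, for instance, points with $|\eta|=0$ and $|\zeta|$ arbitrarily close to $2$, your argument would prove the proposition only for restricted sizes of the phase waves, not for all $(|\eta|,|\zeta|)\in\mathcal{D}$. The paper's proof uses no appendix here: it keeps the factor $C_o$, writes $[\Delta F_{h+1}]_+=\Delta L_{h+1}^0=|\eps_3|\le\frac{C_o}{2}|\delta_1\zeta|$, and bounds $[\Delta F_h]_-$ from below by a multiple of $|\delta_1\zeta|$ coming from $\Delta Q_h=-\xi K_\zeta^r|\delta_1\zeta|$ together with the \emph{same} estimate \eqref{eq:intest2}; what keeps the resulting ratio strictly below $1$ (the paper records it as $C_o/\bigl(\xi(2K_\zeta^r-1)\bigr)$ in \eqref{eq:mu}; a fully literal use of \eqref{eq:intest2} in the lower bound yields the slightly larger $C_o/\bigl(\xi(2K_\zeta^r-C_o)\bigr)$, which changes nothing) is condition \eqref{eq:composite3}, i.e.\ $C_o(1+\xi)\le 2\xi\min\{K_\zeta^r,K_\eta^\ell\}$, which was imposed on the choice of $\rho$ for exactly this purpose. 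So the missing idea is not a sharper interaction estimate --- none valid on all of $\mathcal{D}$ exists --- but the observation that the $C_o$-degraded ratio is already controlled by the parameter condition \eqref{eq:composite3}.
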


\begin{remark}\label{ref:DeltaF<0} Proposition~\ref{prop:genorder} let us improve Proposition~\ref{prop:last}. Indeed, recalling that ${\cal T}_h= I_{h}\cup J_h$, Proposition~\ref{prop:genorder} implies, for $\tau \in I_{h}$,
\begin{equation*}
\Delta F = \sum_{\ell=1}^{h-1}\Delta F_{\ell} \,-\, [\Delta F_h]_- \,+\, [\Delta F_{h+1}]_+  \le -(1-\mu)[\Delta F_h]_- < 0 \,,
\end{equation*}
while for $\tau \in J_{h}$, being  $\sum_{\ell=1}^{h-1} [\Delta F_\ell]_+=0$, it gives
\begin{equation*}
\Delta F  \,=\, -  [\Delta F_h]_- \,+\, [\Delta F_{h+1}]_+ \le  -(1-\mu)[\Delta F_h]_- <0 \,.
\end{equation*}
Then, estimate \eqref{h=k-1b-XXX} specifies the decrease of the functional $F$ and improves \eqref{eq:Fdecr}.
\end{remark}

\begin{proof}[Proof of Proposition~\ref{prop:genorder}] If $k\ge h+2$, no wave of order $k$ is involved and then \eqref{Fk-segni-2} holds\,.
To prove \eqref{Fk-segni-1} and \eqref{h=k-1b-XXX}, we distinguish between two cases.


\smallskip\noindent{\fbox{$\tau\in I_{h}$}} (Interactions between waves of $1$-, $3$-family).

Clearly the $F_k$'s do not vary when a $1$-wave interacts with a $3$-wave. Then we consider interactions of waves of the same family occurring in one of the three distinct regions $\mathcal{L},\mathcal{M},\mathcal{R}$. Since $\tau\in I_{h}$, then $\Delta L_{h+1}=\Delta L_{h+1}^{\ell,m,r}>0$ and
\begin{equation}\label{eq:deltaQh+1}
0\le \Delta Q_{h+1}=\begin{cases}
\Delta Q^m_{h+1}\le \max\left\{K_\eta^m|\eta|,K_\zeta^m|\zeta|\right\}\Delta L^{m}_{h+1} & \text{for interactions in $\mathcal{M}$,}\\[20pt]
\Delta Q^{\ell,r}_{h+1}\le \left(K_\eta^{\ell,r}|\eta|+K_\zeta^{\ell,r}|\zeta|\right)\Delta L^{\ell,r}_{h+1} & \text{for interactions in $\mathcal{L},\mathcal{R}$.}
\end{cases}
\end{equation}
Also, $\Delta F_{h}=\Delta L_h+\Delta Q_h<0$, since both terms in the sum are negative or zero.
This proves \eqref{Fk-segni-1}.

By \eqref{Delta_L_xi_13} and \eqref{Delta_L_xi_13_SR} (see also \cite[(6.10)]{amadori-corli-siam}), we have that
\begin{equation}\label{0k-1}
[\Delta  L_{h+1}]_+ \, \le\,  \frac{1}{\xi} \Bigl([\Delta L_{h}]_- - \sum_{\ell = 1}^{h-1}\,\Delta L_\ell \Bigr)\,.
\end{equation}
From \eqref{0k-1}, \eqref{eq:deltaQh+1} and \eqref{eq:mu} we deduce that
\begin{equation}\label{eq:ll}
\begin{aligned}
0<\Delta F_{h+1}&\le
\begin{cases}
\left(1+\max\left\{K_\eta^m|\eta|,K_\zeta^m|\zeta|\right\}\right)[\Delta L_{h+1}]_+ & \text{for interactions in $\mathcal{M}$}\,,\\[7pt]
\left(1+K_\eta^{\ell,r}|\eta|+K_\zeta^{\ell,r}|\zeta|\right) [\Delta L_{h+1}]_+ & \text{for interactions in $\mathcal{L,R}$}\,,
\end{cases}\\
&\le \mu \Bigl( [\Delta L_{h}]_- - \sum_{\ell = 1}^{h-1}\,\Delta L_\ell\Bigr)\,.
\end{aligned}
\end{equation}
We now claim that
\begin{equation}
  [\Delta Q_{h}]_{-} - \sum_{\ell = 1}^{h-1}\Delta Q_\ell \ge 0\,.
\label{eq:mm}
\end{equation}
To prove \eqref{eq:mm}, we only have to analyze the cases when $\Delta Q_\ell>0$ for an $\ell\le h-1$. This can occur for interactions between waves $\alpha_i<0<\beta_i$ with $\alpha_i$ of order $\ell$ and $\beta_i$ of order $h$, giving rise to waves $\eps_i,\eps_j$ of different sign, $i,j=1,3$, $i\neq j$. More precisely, by Lemma~\ref{lem:shock-riflesso} we have
\begin{equation*}
[\Delta Q_{h}]_- - \sum_{\ell = 1}^{h-1}\Delta Q_\ell=
\begin{cases}
\left[\left(-|\eps_3|+|\beta_3|\right)\left(K_\eta^\ell|\eta|+K_\zeta^\ell|\zeta|\right)+\xi K_\eta^\ell|\alpha_3\eta|\right] &\quad \text{for interactions in $\mathcal{L}$}\,,\\[7pt]
K_\eta^m\left(-|\eps_1|+|\beta_1|\right)|\eta| &\quad \text{for interactions in $\mathcal{M}$}\,,\\[7pt]
K_\zeta^m\left(-|\eps_3|+|\beta_3|\right)|\zeta| &\quad \text{for interactions in $\mathcal{M}$}\,,\\[7pt]
\left[\left(-|\eps_1|+|\beta_1|\right)\left(K_\eta^r|\eta|+K_\zeta^r|\zeta|\right)+\xi K_\zeta^r|\alpha_1\zeta|\right] &\quad \text{for interactions in $\mathcal{R}$}\,,
\end{cases}
\end{equation*}
which is always a nonnegative quantity. This proves (\ref{eq:mm}). Therefore, for $\tau\in I_{h}$, estimate \eqref{h=k-1b-XXX} follows from (\ref{eq:ll}) and (\ref{eq:mm}).


\smallskip\noindent{\fbox{$\tau\in J_{h}$}}  (Interactions with the composite waves).

Here we focus only on interactions involving $\zeta_0$, since the other case gives symmetric conditions. Since no wave of order $\le h-1$ interact, then (\ref{h=k-1b-XXX}) reduces to
\begin{equation}\label{h=k-1b-XXX-caso-J}
[\Delta F_{h+1}]_+ \le \mu [\Delta F_{h}]_- \,.
\end{equation}


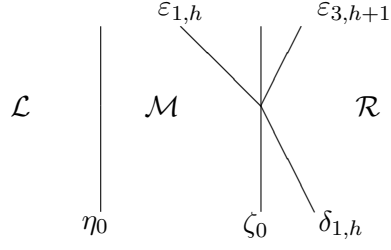
\begin{figure}[htbp]
\begin{picture}(100,100)(-80,-15)
\setlength{\unitlength}{1pt}

\put(160,0){

\put(0,0){\line(0,1){40}} \put(-2,-5){\makebox(0,0){$\zeta_0$}}
\put(-60,0){\line(0,1){40}} \put(-62,-5){\makebox(0,0){$\eta_0$}}
\put(20,0){\line(-1,2){20}} \put(30,-5){\makebox(0,0){$\delta_{1,h}$}}
\put(0,40){\line(0,1){30}} 
\put(-60,40){\line(0,1){30}} 
\put(0,40){\line(1,2){15}} \put(35,75){\makebox(0,0){$\eps_{3,h+1}$}}
\put(0,40){\line(-1,1){30}} \put(-30,75){\makebox(0,0){$\eps_{1,h}$}} 
\put(-37,40){\makebox(0,0){$\mathcal{M}$}}
\put(40,40){\makebox(0,0){$\mathcal{R}$}}
\put(-90,40){\makebox(0,0){$\mathcal{L}$}}
}

\end{picture}

\caption{\label{fig:inter6}{Interactions of a $1$-wave $\delta_1$ with $\zeta_0$ solved by the Accurate solver.}}
\end{figure}


To prove \eqref{h=k-1b-XXX-caso-J}, assume that a $1$-wave $\delta_1$ of order $h$ interacts with $\zeta_0$;
see Figure~\ref{fig:inter6} for the Accurate case. If $\delta_1$ is a rarefaction, then by Lemma~\ref{lem:intest} we have
\begin{align*}
\Delta F_h&= \Delta L_h+\Delta Q_h+\Delta L_h^0\le \frac{|\delta_1\zeta|}{2}+K_\eta^m|\eps_1\eta|-K_\eta^r|\delta_1\eta|-K_\zeta^r|\delta_1\zeta|\le\\
&\le |\delta_1|\left[\left(K_\eta^m\bigl(1+\frac{|\zeta|}{2}\bigr)-K_\eta^r\right)|\eta|+\left(1-2K_\zeta^r\right)\frac{|\zeta|}{2}\right]\,,
\end{align*}
which is nonpositive by \eqref{eq:composite2}$_{1}$ and, in particular, implies $[\Delta F_h]_-\ge (2K_\zeta^r-1)|\delta_1\zeta|/2$. Notice that in the formula above the first inequality is due to the possible presence of a wave of generation order $h$ in $\Delta L_h^0$. As a consequence, we get \eqref{h=k-1b-XXX-caso-J} by \eqref{eq:mu}:
$$
[\Delta F_{h+1}]_+=
L_{h+1}=|\eps_3|\le \frac{|\delta_1\zeta|}{2}\le \frac{1}{2K_\zeta^r-1}[\Delta F_h]_- \,.
$$
If, instead, $\delta_1$ is a shock, then
$$
\Delta F_h =\xi|\eps_3|-K_\zeta^r\xi|\delta_1\zeta|+\Delta L_h^0\le \begin{cases}
\ds\xi\left(1-2K_\zeta^r\right)\frac{|\delta_1\zeta|}{2} &\qquad \text{if $|\delta_1|\ge \rho$}\,,\\[10pt]
\ds \xi\left(C_o-2 K_\zeta^r\right)\frac{|\delta_1\zeta|}{2} &\qquad \text{if $|\delta_1|<\rho$}\,.
\end{cases}
$$
is nonpositive by \eqref{eq:composite}$_{2}$ and $[\Delta F_h]_-\ge \xi(2K_\zeta^r-1)|\delta_1\zeta|/2$. Hence, by Lemma~\ref{lem:intest}, we get
$$
[\Delta F_{h+1}]_+=\begin{cases}
\ds L_{h+1}=\xi|\eps_3|\le \xi\frac{|\delta_1\zeta|}{2}\le \frac{1}{2K_\zeta^r-1}[\Delta F_h]_- &\quad \text{if $|\delta_1|\ge \rho$}\,,\\[10pt]
\ds \Delta L_{h+1}^0= |\eps_3|\le \frac{C_o}{2}|\delta_1\zeta|\le \frac{C_o}{\xi(2K_\zeta^r-1)}[\Delta F_h]_- &\quad \text{if $|\delta_1|< \rho$}\,.
\end{cases}
$$

On the other hand, let us consider the interaction with a wave $\delta_3$ of order $h$ belonging to family $3$. We first analyze the case $\delta_3>0$; by Lemma~\ref{lem:intest}
we have $\Delta F_h=\Delta L_h+\Delta Q_h+\Delta L_h^0\le -[1+2K_\zeta^m]|\eps_1|\le 0$. Thus, by \eqref{eq:mu} we get \eqref{h=k-1b-XXX-caso-J}:
$$
[\Delta F_{h+1}]_+=\begin{cases}
\ds L_{h+1}=\xi|\eps_1|\le \frac{\xi}{1+2K_\zeta^m}[\Delta F_h]_- &\quad \text{if $|\delta_3|\ge \rho$}\,,\\[10pt]
\ds\Delta L_{h+1}^0= |\eps_1|\le \frac{1}{1+2K_\zeta^m}[\Delta F_h]_- &\quad \text{if $|\delta_3|< \rho$}\,.
\end{cases}
$$
In the other case, i.e.\ when $\delta_3$ is a shock, we have $\Delta F_h\le -\xi|\eps_1|\le 0$. Hence,
$$
[\Delta F_{h+1}]_+= \begin{cases}
\ds L_{h+1}+ Q_{h+1}=\left(1+K_\eta^m|\eta|\right)|\eps_1|\le \frac{1+K_\eta^m|\eta|}{\xi}[\Delta F_h]_- &\quad \text{if $|\delta_3|\ge \rho$}\,,\\[10pt]
\ds \Delta L_{h+1}^0=|\eps_1|\le \frac{1}{\xi}[\Delta F_h]_- &\quad \text{if $|\delta_3|< \rho$}\,.
\end{cases}
$$
Then, \eqref{h=k-1b-XXX-caso-J} is completely proved. Finally, we notice that in all above cases for $\tau\in J_h$ formula \eqref{h=k-1b-XXX} holds.
\end{proof}

The remaining analysis aims at proving that
\begin{equation}\label{eq:decFk}
\tilde{F}_k(t)=\sum_{j\ge k} F_j(t) \le {\mu^{k-1}} F_1(0)
\end{equation}
for any $k\ge 2$ and for any $t$; it is carried out as in \cite[Propositions 6.3 and 6.4]{ABCD} and follows from Proposition~\ref{prop:genorder}. In particular, formula \eqref{eq:decFk} is needed to prove that the total size of the composite waves vanishes as $\nu\rightarrow \infty$. We conclude the section by determining parameters $\rho$ and $\sigma$ as in \cite{amadori-corli-siam}. Fix $\sigma>0$ such that $\sigma=\sigma_\nu\rightarrow 0$ as $\nu\rightarrow \infty$ and estimate the total number of waves of order $<k$. Then, the total size of the composite waves is less (or equal) than
\begin{align*}
    \tilde{L}_k(t)&+\sum_{\genfrac{}{}{0pt}{}{h<k} {\tau_h<t}}
    \left(|\Delta \eta_0|+|\Delta \zeta_0|\right)(\tau_h) 
    \\
    &\le  \mu^{k-1}\cdot F_1(0)+C_o(\rho)\frac{\rho}{2}\,(|\eta|+|\zeta|)\,
    [\text{number of fronts of order $<k$}] 
    \\
    &\le \mu^{k-1}\cdot m_o+ C_o(\rho)\frac{\rho}{2}\,(|\eta|+|\zeta|)\,
    [\text{number of fronts of order $<k$}]\,,
\end{align*}
which is $<1/\nu$ by choosing $k$ sufficiently large to have the first term $\le 1/(2\nu)$ and, then, $\rho=\rho_{\nu}(m_o)$ small enough to have the second term also $\le 1/(2\nu)$.


\subsection{End of the Proof of Theorem~\ref{thm:main} and a comparison}
\label{subsec:proof}

In this last section we accomplish the proof of Theorem~\ref{thm:main} and compare the result we obtain with that proved in \cite{amadori-corli-siam,amadori-corli-source}.

\begin{proof}[End of the Proof of Theorem~\ref{thm:main}] It only remains to reinterpret the choice of the parameter $m_o$ in terms of the assumption~\eqref{hyp2} on the initial data. Notice that we can approximate the initial datum (already satisfying $1.$, $2.$ and $3.$ of Section~\ref{sec:app_sol}) in such a way that the jump $\left((p_\ell,u_\ell,\lambda_\ell),(p_m,u_m,\lambda_m)\right)$ at the interface $x=\xa$ is substituted by a jump consisting of the $2$-wave separating $(p_\ell,u_\ell,\lambda_\ell)$ and $(p_\ell,u_\ell,\lambda_m)$ at $x=\xa$ and by the solution to the newly appeared Riemann problem at $x=\xa^+$ with states $(p_\ell,u_\ell,\lambda_m)$ and $(p_m,u_m,\lambda_m)$. Analogously, we can proceed for a jump $\left((p_m,u_m,\lambda_m),(p_r,u_r,\lambda_r)\right)$ at $x=\xb$. This is possible because $p,u$ remain constant across a phase wave. Thus, we can relate hypothesis \eqref{hyp2} to \eqref{eq:boundL-bis} by including in $\bar{L}^m$ ($\bar{L}^r$, respectively) the total variation of $p_o$ and $u_o$ at the interface and as in \cite[(3.12)]{amadori-corli-siam} we can prove that
\begin{equation}\label{eq:tvpm}
\begin{aligned}
\bar{L}^\ell(0)&\le\frac{1}{2}\tv_{x<\xa}\left(\log(p_o),\frac{u_o}{a_\ell}\right)\,, \\
\bar{L}^m(0)&\le \frac{1}{2}\tv_{\xa<x<\xb}\left(\log(p_o),\frac{u_o}{a_m}\right)\,,\\
\bar{L}^r(0)&\le \frac{1}{2}\tv_{x>\xb}\left(\log(p_o),\frac{u_o}{a_r}\right) \,.
\end{aligned}
\end{equation}
Now, by \eqref{eq:cmxi}, \eqref{eq:boundL-bis} and \eqref{eq:tvpm} we have to look for an $m_o$ satisfying
\begin{equation}
    \mathcal{H}(|\eta|,|\zeta|)<\frac{1}{{c(m_o)}}-1 \,=\, \frac{2}{\cosh m_o -1} \, =: \, w(m_o) \label{hyp1}
\end{equation}
and
\begin{equation}\label{hyp2-1}
\tv_{x<\xa}\left(\log(p_o),\frac{u_o}{a_\ell}\right) + c(m_o)\tv_{\xa<x<\xb}\left(\log(p_o),\frac{u_o}{a_m}\right) + \tv_{x>\xb}\left(\log(p_o),\frac{u_o}{a_r}\right) < 2m_o\hspace{.8pt} c(m_o) \, =:\, z(m_o)\,.
\end{equation}
Notice that $w(m_o)$ is strictly decreasing from $\reali_+$ to $\reali_+$, while $z(m_o)$ is strictly increasing on the same sets.
We can now define
\begin{equation}\label{K}
\mathcal{K}(r)\, =\, z\left(w^{-1}(r) \right)\,,\qquad r\in (0,+\infty)\,,
\end{equation}
which can be written explicitly as
\begin{equation}\label{eq:K-explicit}
\mathcal{K}(r)=\frac{2}{1+r}\,c^{-1}\left(\frac{1}{1+r}\right)
=\frac{2}{1+r} \log\left(1+\frac{2}{r}\bigl(1+\sqrt{1+r}\bigr)\right)\,.
\end{equation}
Hence, if the assumption \eqref{hyp2} holds, 
it is easy to prove that one can choose $m_o$ such that \eqref{hyp1}, \eqref{hyp2-1} hold. Finally, in order to pass to the limit and prove the convergence to a weak solution, one can proceed as in \cite{Bressanbook}. Theorem~\ref{thm:main} is, therefore, completely proved.
\end{proof}

\begin{remark}\label{rem:RI} Notice that a slight improvement of Theorem~\ref{thm:main} follows from the use of the Riemann invariants
\begin{equation*}
z=u - a\log v\,,\qquad w=u+a\log v\,,
\end{equation*}
where $a=a_\ell$, $a=a_m$ and $a=a_r$ in the regions $\mathcal{L}$, $\mathcal{M}$ and $\mathcal{R}$, respectively.
Indeed, recalling Definition~\ref{eq:strengths}, one easily finds that the solution to the Riemann problem with
$U_-=(v_-,u_-,\lambda)$ and $U_+=(v_+,u_+,\lambda)$ satisfies
\begin{equation*}
|\eps_1| + |\eps_3|\le \frac 1 {4a}  \left( |w_+-w_-| +  |z_+-z_-| \right) \le   \frac 1 {2}|\log(p_+)-\log(p_-)| + \frac 1 {2a} |u_+-u_-| \,,
\end{equation*}
(with obvious notation) and the second inequality is possibly strict (for instance if the solution to the Riemann problem is a single rarefaction).

Hence, the right sides of \eqref{eq:tvpm} could be replaced by $\frac{1}{4a_\ell}\tv_{x<\xa}\left(w_o,z_o\right)$,
$\frac{1}{4a_m}\tv_{\xa<x<\xb}\left(w_o,z_o\right)$ and
$\frac{1}{4a_r}\tv_{x>\xb}\left(w_o,z_o\right)$, respectively, and \eqref{hyp2-1} could be given in terms of these quantities,
leading to a weaker assumption on the initial data.
\end{remark}

Now, we make a comparison between Theorem~\ref{thm:main} and the main result in \cite{amadori-corli-siam}, which was proved to be equivalent to Theorem $3.1$ of \cite{amadori-corli-source}. First, notice that condition $(3.6)$ of \cite[Theorem 3.1]{amadori-corli-source} can be written as $|\eta|+|\zeta|<1/2$, when applied to the current problem. Then, it implies \eqref{stab}, since
$$
\max\left\{\bigl(1+\frac{|\zeta|}{2}\bigr)\frac{|\eta|}{2},
    \bigl(1+\frac{|\eta|}{2}\bigr)\frac{|\zeta|}{2}\right\}<\frac{|\eta|+|\zeta|}{2}<\frac{1}{4}\,,
$$
i.e.\ the domain $\mathcal{D}$ contains entirely that of \cite{amadori-corli-source}.

Next, we claim that $\mathcal{H}(|\eta|,|\zeta|)<|\eta|+|\zeta|$ when $|\eta|+|\zeta|<1/2$. Indeed, by \eqref{stab} we have that
$$
\frac{|\zeta|}{1-(1+{|\zeta|}/{2}){|\eta|}/{2}}\le|\eta|+|\zeta|
$$
is equivalent to
$$
(|\eta|+|\zeta|)\bigl(1+\frac{|\zeta|}{2}\bigr)\le2\,,
$$
which holds true by \eqref{stab}. Similarly, the inequality
$$
\frac{|\eta|}{1-(1+{|\eta|}/{2}){|\zeta|}/{2}}\le|\eta|+|\zeta|\,,
$$
is equivalent to $(|\eta|+|\zeta|)(1+|\eta|/2)\le2$,
which holds true for the same reason. This proves the claim. 

On the other hand, condition $(3.7)$ of \cite[Theorem 3.1]{amadori-corli-source} here becomes by \eqref{eq:case}
\begin{equation}\label{acs37}
\tv\left(\log(p_o),\frac{1}{a_m}
u_o\right) < H(|\eta|+|\zeta|)\,,
\end{equation}
where the function $H(r)$
is only defined for $r<1/2$ by
\begin{equation}\label{H}
H(r)= 2(1-2r)k^{-1}(r)\,, \qquad k(m_o) = \frac{1-\sqrt{d(m_o)}}{2-\sqrt{d(m_o)}}\,.
\end{equation}
Here above, $d(m_o)$ is the damping coefficient introduced in \cite[Lemma 5.6]{amadori-corli-siam}. From \cite{ABCD} we already know that $\mathcal{K}>H$ in the common range $|\eta|+|\zeta|<1/2$. Thus, \eqref{hyp2} improves \eqref{acs37}, since the left-hand side of \eqref{acs37} is bigger than the left-hand side of \eqref{hyp2} and
$$
\mathcal{K}\left(\mathcal{H}(|\eta|,|\zeta|)\right)>\mathcal{K}(|\eta|+|\zeta|)>H(|\eta|+|\zeta|)\,.
$$
Consequently, we obtain enhanced conditions on the initial data in comparison with  \cite{amadori-corli-siam,amadori-corli-source},
even though the latter results apply to a wider class of $\lambda_o$.

\begin{remark}\label{rem:hyp2}
As already mentioned, in absence of one of two phase-waves, i.e.\ for example $\eta=0$, hypothesis \cite[(2.3)]{ABCD}, as well as
\eqref{eq:hyp2improved}, can be improved by \eqref{eq:hyp2improved2}. Indeed, consider a modified functional $\hat{F}=F^m+F^r+L^0$,
where $F^{m,r},L^0$ are the same of \eqref{F} with $\eta=0$ and $\mathcal{M}=\{(x,t):\, x<b\}$.
Arguing as in the proof of Proposition~\ref{global}, if we assume that
$$
c(m_o)<\frac{1}{1+|\zeta|} \qquad \text{and} \qquad c(m_o)\hspace{0.8pt}\bar{L}^m(0) +\bar{L}^r(0) \le m_o\hspace{0.8pt} c(m_o)\,,
$$
then $\hat{F}$ decreases. This last inequality holds true if we take
$$
c(m_o)\tv_{x<\xb}\left(\log(p_o),\frac{u_o}{a_m}\right) + \tv_{x>\xb}\left(\log(p_o),\frac{u_o}{a_r}\right) < z(m_o)\,,
$$
which is implied by \eqref{eq:hyp2improved2}.
\end{remark}

\appendix
\section{On the estimate \eqref{eq:intest2}}\label{appB}
\setcounter{equation}{0}

In this short appendix we consider estimate \eqref{eq:intest2}, in the case of interaction of a $1$-shock with a composite wave $\delta_0$. The question is whether we can substitute $C_o$ with $1$. The answer is only partially positive as we show in the following lemma, which improves \eqref{eq:intest2}. For simplicity, we focus on the case $\delta>0$.

\begin{lemma}\label{lem:bestC0}
    Assume that a $1$-shock $\delta_1$ interacts with $\delta_0$, when $\delta>0$
    and $|\delta_1|<\rho$. Then,
    \begin{equation}
	\label{eq:iesimpl2bis}
	|\eps_0-\delta_0|=|\eps_3|\le
	\ds\frac{1}{2}\delta|\delta_1| \quad \text{if }0<\delta\le\sqrt{5}-1\,,
    \end{equation}
    where the bound on $\delta$ in \eqref{eq:iesimpl2bis} is sharp.
    We also have
    \begin{equation}
	\label{eq:iesimpl2bis3}
	|\eps_0-\delta_0|=|\eps_3|\le
	\ds\frac{\Theta(\delta,\delta_{1})}{2}\,\delta|\delta_{1}|\,,
    \end{equation}
    for a suitable function $\Theta$ such that $\Theta(\delta,z)\le 1$ for $\delta\le2/3$, $\Theta(\delta,z)> 1$ on a right neighborhood of $z=0$ for $\delta>2/3$
    and $\lim_{z\to+\infty}\Theta(\delta,z)=0$.
\end{lemma}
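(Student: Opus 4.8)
The plan is to reduce both estimates to the single scalar relation governing the reflected wave $\eps_3$ in the Simplified solver for the case $i=1$, $\delta_1<0$, $\delta>0$. Recall from the proof of Proposition~\ref{prop:accsimpl} (see \eqref{eq:delta1eq2}) that, setting $k=a_+/a_->1$ and $\Gamma(x)=kx+\sinh x$, the outgoing strengths satisfy $\eps_1=\eps_3+\delta_1$ and $\Gamma(\eps_3+\delta_1)-\Gamma(\delta_1)=(k-1)\sinh\delta_1$. Passing to the positive variables $y=|\eps_3|=-\eps_3$ and $t=|\delta_1|=-\delta_1$ and using that $\Gamma$ is odd, this becomes the implicit equation
\[
\Gamma(y+t)-\Gamma(t)=(k-1)\sinh t\,,\qquad\text{equivalently}\qquad ky+\sinh(y+t)=k\sinh t\,,
\]
which has a unique solution $y=y(t)>0$. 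I will also use $\delta=2(k-1)/(k+1)$, so that $\tfrac12\delta\,t=\tfrac{k-1}{k+1}\,t$ and $k=(2+\delta)/(2-\delta)$; in particular $k\le2\iff\delta\le2/3$ and $k\le2+\sqrt5\iff\delta\le\sqrt5-1$.

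For \eqref{eq:iesimpl2bis3} I would apply the Mean Value Theorem to the first form, $\Gamma'(s)\,y=(k-1)\sinh t$ for some $s\in(t,t+y)$, whence, since $\Gamma'(s)=k+\cosh s\ge k+\cosh t$,
\[
y\le\frac{(k-1)\sinh t}{k+\cosh t}\,.
\]
Dividing by $\tfrac12\delta t=\tfrac{k-1}{k+1}t$ suggests defining
\[
\Theta(\delta,z)=\frac{(k+1)\sinh z}{z\,(k+\cosh z)}\,,
\]
so that $y\le\tfrac{\Theta(\delta,t)}{2}\,\delta t$, which is \eqref{eq:iesimpl2bis3}. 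The three asserted properties then follow from elementary analysis of $\Theta$. A Taylor expansion at $z=0$ gives $\Theta(\delta,z)=1+\bigl(\tfrac16-\tfrac1{2(k+1)}\bigr)z^2+O(z^4)$, so the quadratic coefficient changes sign exactly at $k=2$, i.e.\ $\delta=2/3$, yielding $\Theta>1$ near $z=0^+$ for $\delta>2/3$; the limit $\Theta(\delta,z)\to0$ as $z\to+\infty$ is immediate since $\sinh z/\cosh z\to1$; and the global bound $\Theta\le1$ for $\delta\le2/3$ reduces to $\phi(z):=kz+z\cosh z-(k+1)\sinh z\ge0$, which holds because $\phi(0)=\phi'(0)=\phi''(0)=0$ and $\phi'''(z)=(2-k)\cosh z+z\sinh z\ge0$ for $k\le2$.

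For \eqref{eq:iesimpl2bis}, the sharp statement, the key observation is that $F(y):=ky+\sinh(y+t)$ is strictly increasing with $F(y)=k\sinh t$, so $y\le\tfrac12\delta t$ if and only if $F(\tfrac12\delta t)\ge k\sinh t$. Writing $\beta=2k/(k+1)\in(1,2)$ and noting $\tfrac12\delta t+t=\beta t$, this is equivalent to $g(t)\ge0$, where
\[
g(t)=\frac{k(k-1)}{k+1}\,t+\sinh(\beta t)-k\sinh t\,.
\]
A direct computation gives $g(0)=g'(0)=g''(0)=0$, while $g'''(t)=\beta^3\cosh(\beta t)-k\cosh t$ satisfies $g'''(0)=\beta^3-k=\dfrac{8k^3-k(k+1)^3}{(k+1)^3}=-\dfrac{k(k-1)(k^2-4k-1)}{(k+1)^3}$. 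Since the positive root of $k^2-4k-1$ is $k=2+\sqrt5$, corresponding to $\delta=\sqrt5-1$, for $\delta\le\sqrt5-1$ (i.e.\ $k\le2+\sqrt5$) one has $\beta^3\ge k$; combined with $\cosh(\beta t)\ge\cosh t$ (as $\beta>1$) this gives $g'''\ge0$ on $[0,+\infty)$, hence successively $g''\ge0$, $g'\ge0$, $g\ge0$, proving the bound. Conversely, for $\delta>\sqrt5-1$ we have $g'''(0)<0$, so $g(t)<0$ for small $t>0$, i.e.\ $|\eps_3|>\tfrac12\delta|\delta_1|$ for small shocks, which shows the threshold $\sqrt5-1$ is sharp.

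The chain of derivative-sign implications is routine; the step requiring genuine care is pinning down the sharp constant, i.e.\ recognizing that the whole question collapses to the sign of $g'''(0)$ and that its numerator factors as $-k(k-1)(k^2-4k-1)$, which is exactly what isolates $\delta=\sqrt5-1$. A secondary subtlety is keeping the two thresholds conceptually separate: the estimate $\cosh s\ge\cosh t$ used for $\Theta$ is lossy, so $\Theta\le1$ is only \emph{sufficient} (threshold $2/3$) for \eqref{eq:iesimpl2bis}, whereas the exact solution $y(t)$ obeys the weaker condition $\delta\le\sqrt5-1$; one must not expect the two thresholds to coincide.
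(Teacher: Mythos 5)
Your proof is correct and follows essentially the same route as the paper's: the same reduction to the implicit relation $ky+\sinh(y+t)=k\sinh t$ with monotonicity in $y$, the same cubic-coefficient factorization $-k(k-1)(k^2-4k-1)$ isolating $k=2+\sqrt{5}$, i.e.\ $\delta=\sqrt{5}-1$, and the same refined Mean Value Theorem bound $\Gamma'(s)\ge k+\cosh|\delta_1|$ defining the very same function $\Theta$. The only cosmetic difference is that you establish the global positivity of $g$ (for $k\le 2+\sqrt{5}$) and of $kz+z\cosh z-(k+1)\sinh z$ (for $k\le 2$) by bootstrapping the sign of third derivatives, whereas the paper checks nonnegativity of all the odd power-series coefficients; the two verifications are interchangeable.
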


Remark that conditions \eqref{eq:iesimpl2bis} and \eqref{eq:iesimpl2bis3} overlap when $\delta\le2/3$.

\begin{proof}[Proof of Lemma~\ref{lem:bestC0}]
First, we claim that for every fixed $\delta>\sqrt{5}-1$ there exist
    $r=r(\delta)$ and $R=R(\delta)$ such that
    \begin{equation}
	\label{eq:iesimpl2bis2}
	 \sup_{|\delta_{1}|<r}\left\{\frac{|\eps_0-\delta_0|}{\delta|\delta_1|/2}\right\}>1\,,
	\qquad
	 \sup_{|\delta_{1}|>R}\left\{\frac{|\eps_0-\delta_0|}{\delta|\delta_1|/2}\right\}<1\,.
    \end{equation}
Recalling that in this case $\eps_{1}=\eps_{3}+\delta_{1}$ and
$\eps_{3},\delta_{1}<0$, equation \eqref{eq:delta1eq}
can be written as
$$
k|\eps_{3}|+\sinh(|\eps_{3}|+|\delta_{1}|)-k\sinh |\delta_{1}|=0\,,
$$
i.e.\ $\widetilde{\Gamma}(k,x,y)=0$ where $x=|\delta_{1}|$, $y=|\eps_{3}|$, $k=(2+\delta)/(2-\delta)$,
$\widetilde{\Gamma}(k,x,y)=ky+\sinh(x+y)-k\sinh x$; this equation implicitly defines $y=y(x;k)$.
Then, \eqref{eq:iesimpl2bis} for a fixed $\delta$ is
equivalent to $y\leq\frac{k-1}{k+1}x$, and, since $\widetilde{\Gamma}_{y}=k+\cosh(x+y)>0$, this is
true iff $\widetilde{\Gamma}(k,x,\frac{k-1}{k+1}x)\ge0$ for all $x\ge0$. It holds
\begin{align*}
    \widetilde{\Gamma}\left(k,x,\frac{k-1}{k+1}x\right)
    &=\left(k\frac{k-1}{k+1}+\frac{2k}{k+1}-k\right)x+
    \sum_{h=1}^{\infty}\left[\left(\frac{2k}{k+1}\right)^{2h+1}-k\right]
    \frac{x^{2h+1}}{(2h+1)!}\\
    &=:\sum_{h=1}^{\infty}a_{k,h}\frac{x^{2h+1}}{(2h+1)!}\,,
\end{align*}
as well as
$$
\widetilde{\Gamma}\left(k,x,\frac{k-1}{k+1}x\right)
=a_{k,1}x^{3}+o(x^{3})=-\frac{k(k-1)(k^{2}-4k-1)}{6(k+1)^{3}}x^{3}+o(x^{3})\,.
$$
We have $a_{k,1}<0$ iff $k>2+\sqrt{5}$; hence, $\widetilde{\Gamma}(k,x,\frac{k-1}{k+1}x)<0$ in a
right neighborhood of $x=0$. Since $k>2+\sqrt{5}$ iff $\delta>\sqrt{5}-1$,
\eqref{eq:iesimpl2bis2}$_{1}$ follows.
Moreover, for $k>1$ fixed, there exists $\tilde{h}(k)$ such that
$a_{k,h}>0$ for $h\ge\tilde{h}(k)$; this implies
$\lim_{x\to+\infty}\widetilde{\Gamma}(k,x,\frac{k-1}{k+1}x)=+\infty$,
hence \eqref{eq:iesimpl2bis2}$_{2}$. This proves the claim. As a consequence, for
    $\delta>\sqrt{5}-1$ it holds
    $$
    \sup_{|\delta_{1}|\neq0}\left\{\frac{|\eps_0-\delta_0|}{\delta|\delta_1|/2}\right\}>1\,,
    $$
    and the estimate on the left in \eqref{eq:iesimpl2bis} fails.

On the contrary, when $k\le2+\sqrt{5}$ (i.e.\ $\delta\le\sqrt{5}-1$),
there holds $0\le a_{k,1}<a_{k,h}$ for $h>1$, hence
$\widetilde{\Gamma}(k,x,\frac{k-1}{k+1}x)>0$ for all $x\ge0$ and
\eqref{eq:iesimpl2bis} follows.

Now, we prove \eqref{eq:iesimpl2bis3}; by \eqref{eq:delta1eq}
and the Mean Value Theorem, there exists $s\in\,]\eps_{3}+\delta_{1},\delta_{1}[$ such that $\Gamma(\eps_3+\delta_1)-\Gamma(\delta_1)=\Gamma'(s)\eps_{3}$.
Since $s<\delta_{1}<0$ we get $\Gamma'(s)>\Gamma'(\delta_{1})=k+\cosh\delta_{1}$ so that $(k+\cosh|\delta_{1}|)\cdot|\eps_{3}|\le(k-1)\sinh|\delta_1|$.
Then, \eqref{eq:iesimpl2bis3} holds for all $\delta>0$, since
\begin{align*}
    |\eps_{3}|\le\frac{k-1}{k+\cosh|\delta_{1}|}\sinh|\delta_1|
    =\left(\frac{k+1}{k+\cosh|\delta_{1}|}\cdot\frac{\sinh|\delta_1|}{
    |\delta_{1}|}\right)\cdot\frac{1}{2}|\delta\delta_{1}|
    =:\Theta(\delta,|\delta_{1}|)\cdot\frac{1}{2}|\delta\delta_{1}|\,.
\end{align*}

Finally, let us prove the properties of $\Theta$.
For simplicity, call $z=|\delta_{1}|$; then $\Theta(\delta,z)\leq 1$ iff $0\le \left(k+\cosh z\right) -(k+1)\sinh z/z$, which is equivalent to
$$
\sum_{h=1}^{\infty}\left[(2h+1)-(k+1)\right]\frac{z^{2h}}{(2h+1)!}\ge 0\,.
$$
This last inequality is verified for every $z\geq0$ iff $2h+1\geq k+1$
for every $h\geq1$, i.e.\ iff $k\le 2$. It is easy to check that $k=2$
is equivalent to $\delta=2/3$.
This implies that $\Theta(\delta,z)\le 1$ for every $z\ge0$ if
$\delta\le2/3$, while $\Theta(\delta,z)> 1$ on a right neighborhood of $0$
if $\delta>2/3$.
By construction $\Theta(\delta,\delta_{1})< C_o(\delta_{1})\le C_o(\rho)$, with
$\lim_{\delta_{1}\to\infty}\Theta(\delta,\delta_{1})=0$, while
$\lim_{\delta_{1}\to\infty}C_o(\delta_{1})=+\infty$. Hence,
\eqref{eq:iesimpl2bis3} is far better than \eqref{eq:intest2} especially
when $\delta>2/3$.
\end{proof}


{\small
\bibliographystyle{abbrv}

}


\end{document}